\newtheorem{theorem}{Theorem}[section]
\newtheorem{proposition}[theorem]{Proposition}
\newtheorem{lemma}[theorem]{Lemma}
\newtheorem{corollary}[theorem]{Corollary}
\theoremstyle{definition}
\newtheorem{remark}[theorem]{Remark}
\newtheorem{dfn}[theorem]{Definition}
\newcommand{\Z}{\mathbb{Z}}   
\newcommand{\Q}{\mathbb{Q}}   
\newcommand{\gmu}{\boldsymbol{\mu}}
\newcommand{\CH}{\operatorname{CH}}
\newcommand{\Inv}{\operatorname{Inv}^3_{ind}}
\newcommand{\Sinv}{\operatorname{Inv}^3_{sd}}
\newcommand{\Br}{\operatorname{Br}}
\newcommand{\SB}{\operatorname{SB}}
\newcommand{\gPGL}{\operatorname{\mathbf{PGL}}}
\newcommand{\gSP}{\operatorname{\mathbf{SP}}}
\newcommand{\gSL}{\operatorname{\mathbf{SL}}}
\newcommand{\gSO}{\operatorname{\mathbf{SO}}}
\newcommand{\gPGSP}{\operatorname{\mathbf{PGSp}}}
\newcommand{\gSpin}{\operatorname{\mathbf{Spin}}}
\newcommand{\gHSpin}{\operatorname{\mathbf{HSpin}}}
\newcommand{\gPGO}{\operatorname{\mathbf{PGO}}}
\newcommand{\Sdec}{\operatorname{Sdec}}
\newcommand{\Dec}{\operatorname{Dec}}
\newcommand{\torsion}{\operatorname{tors}}
\newcommand{\ldeg}{\operatorname{ldeg}}
\newcommand{\hdeg}{\operatorname{hdeg}}
\newcommand{\wdeg}{\operatorname{wdeg}}
\newcommand{\lcm}{\operatorname{lcm}}
\title[The $K$-theory and cohomological invariants] 
{The $K$-theory of versal flags and cohomological invariants of degree 3}
\author
[S.~Baek] {Sanghoon Baek}
\address[Sanghoon Baek]{Department of Mathematical Sciences, 
KAIST,
291 Daehak-ro, Yuseong-gu,
Daejeon 305-701,
Republic of Korea}
\email{sanghoonbaek@kaist.ac.kr}
\urladdr{http://mathsci.kaist.ac.kr/~sbaek/}
\author
[R.~Devyatov]{Rostislav Devyatov}
\address[Rostislav Devyatov]{Department of Mathematics and Statistics, University of Ottawa, 585 King Edward Street, Ottawa, ON, K1N 6N5, Canada}
\email{deviatov@mccme.ru}
\urladdr{http://www.mccme.ru/~deviatov/}
\author
[K.~Zainoulline]{Kirill Zainoulline}
\address[Kirill Zainoulline]{Department of Mathematics and Statistics, University of Ottawa, 585 King Edward Street, Ottawa, ON, K1N 6N5, Canada}
\email{kirill@uottawa.ca}
\urladdr{http://mysite.science.uottawa.ca/kzaynull/}
\keywords{linear algebraic group, twisted flag variety, torsor, cohomological invariant}
\subjclass[2010]{14M17, 20G15, 14C35}
\begin{document}

\begin{abstract}
Let $G$ be a split semisimple linear algebraic group over a field and let $X$ be a generic twisted flag variety of $G$. 
Extending the Hilbert basis techniques to Laurent polynomials over integers 
we give an explicit presentation of the Grothendieck ring $K_0(X)$ in terms of generators and relations in the case
$G=G^{sc}/\mu_2$ is of Dynkin type ${\rm A}$ or ${\rm C}$ (here $G^{sc}$ is the simply-connected cover of $G$);
we compute various groups of (indecomposable, semi-decomposable) cohomological invariants of degree 3, hence, generalizing and extending previous results in this direction.
\end{abstract}

\maketitle


\section{Introduction}

Let $G$ be a split semisimple linear algebraic group over a field $F$.
Let $U/G$ be a {\em classifying space} of $G$ in the sense of Totaro \cite[Rem.1.4]{To}, i.e. $U$ is an open
$G$-invariant subset in some representation of $G$ with $U(F)\neq
\emptyset$ and $U\to U/G$ is a $G$-torsor. Consider the generic fiber $U'$ of $U$ over $U/G$. It is a $G$-torsor over the quotient
field $F'$ of $U/G$ called the {\em versal} $G$-torsor \cite[Ch.I, \S
5]{GMS}. We denote by $X$ the respective flag variety $U'/B$ over
$F'$, where $B$ is a Borel subgroup of $G$, and call it the {\em
  versal} flag. The variety $X$ appears in many different contexts, e.g. related to cohomology of homogeneous $G$-varieties  (see \cite{GiZa} for an arbitrary oriented theory; Karpenko \cite{Kar}, \cite{Ka17}, \cite{Ka16} for Chow groups; Panin~\cite{Panin} for $K$-theory) and cohomological invariants of $G$ (see Merkurjev~\cite{Mer} and \cite{GaZa}, \cite{MNZ}). It can be viewed as a generic example of the so called {\em twisted flag variety}.

In the first part of the paper (Sections~2-4) we give an explicit presentation of the ring $K_0(X)$ in terms of generators modulo a {\em finite} number of relations 
in cases when $G=G^{sc}/\mu_2$, where $G^{sc}$ is the product of simply-connected simple groups of Dynkin types ${\rm A}$ or ${\rm C}$ and $\mu_2$ is a central subgroup of order 2.

Observe that for simply-connected $G$ the ring $K_0(X)$ can be identified with $K_0(G/B)$ (e.g., see Panin~\cite{Panin}), 
and by Chevalley theorem 
there is a surjective characteristic map $c\colon R(T_{sc}) \to K_0(G/B)$
from the representation ring of the split maximal torus $T_{sc}$ such that the kernel $\ker(c)=I_{sc}^W$ is generated
by augmented classes of fundamental representations. 
So, all relations in $K_0(X)$ correspond to $W$-orbits of fundamental weights.

If $G$ is not simply-connected (as in the $G^{sc}/\mu_2$-case), then the situation changes dramatically as by \cite[Ex.5.4]{GiZa} we have $K_0(X)\simeq R(T)/I_{sc}^W\cap R(T)$ and a finite set of generators of $I_{sc}^W\cap R(T)$ is not known in general. Note that by definition we have inclusions of abelian groups $I^W \subseteq I_{sc}^W\cap R(T) \subseteq  I_{sc}^W$ which all coincide if taken with $\mathbb{Q}$-coefficients. 
However, there are examples of semisimple groups 
(see \cite[Ex.3.1]{MNZ} and \cite{Bae}) where both quotients $I_{sc}^W\cap R(T)/I^W$ and 
$I_{sc}^W/I_{sc}^W\cap R(T)$ are non-trivial.

Our Theorem~\ref{thm:maingen} provides a complete list of generators (Definition~\ref{dfn:genlist}) of the ideal $I_{sc}^W\cap R(T)$. To prove this result we assume that the root system of $G^{sc}$ satisfies the generalized flatness condition (see Definition~\ref{dfn:genflatcond}).
In Section~4 we show that this condition holds for types ${\rm A}$ and ${\rm C}$.

In the second part of the paper  we study cohomological invariants of degree $3$ of $G$.
According to~Garibaldi-Merkurjev-Serre \cite[p.106]{GMS}, a degree~$d$ {\em cohomological invariant} is a natural transformation of functors $a\colon H^1(\,\cdot\,,G)\to H^d(\,\cdot\,,\Q/\Z(d-1))$
on the category of field extensions over $F$, 
where the functor $H^1(\,\cdot\,,G)$ classifies $G$-torsors, $H^d(\,\cdot\,,\Q/\Z(d-1))$ is the Galois cohomology. 
Following~Merkurjev~\cite{Mer}, an invariant is called {\em decomposable} if it is given by a cup-product of invariants of smaller degrees; 
the factor group of (normalized) invariants modulo decomposable is called the group of {\em indecomposable} invariants. For $d=3$ the latter
has been computed for all simple split groups  in \cite{Mer} and  \cite{BR}; 
for some semi-simple groups of type ${\rm A}$ in \cite{M16} and \cite{Bae};
for adjoint semisimple groups in \cite{Me16}.

Another key subgroup of {\em semi-decomposable} invariants introduced in \cite{MNZ} consists of invariants given by a cup-product of invariants up to some field extensions. For $d=3$ it coincides with the group of decomposable invariants for all simple groups \cite{MNZ}. It was also shown that these groups are different for $G=\gSO_4$ \cite[Ex.3.1]{MNZ} and for some semisimple groups of type ${\rm A}$ (see \cite{Bae}).

In Sections 6-11 we compute the groups of decomposable, indecomposable and semi-decomposable invariants of degree 3
for new examples of semisimple groups (e.g. $G^{sc}/\mu_2$, products of adjoint groups), hence, extending the results of \cite{Mer}, \cite{BR}, \cite{Bae},\ \cite{MNZ}, \cite{M16}. In particular, we essentially extend the examples \cite[Ex.3.1]{MNZ} and \cite{Bae}; we show that

$\bullet$ The factor group of semi-decomposable invariants of $G$ modulo decomposable is nontrivial if and only if $G$ is of classical type ${\rm A}$, ${\rm B}$, ${\rm C}$, ${\rm D}$. Moreover, we determine all the factor groups (and indecomposable groups) for an arbitrary product of simply-connected simple groups of the same Dynkin type modulo the central subgroups $\gmu_{2}$ (see Corollaries~\ref{cor:typeA}, \ref{cor:typeB}, \ref{cor:typec}, \ref{cor:typeD}, and Proposition \ref{prop:typeE}).

$\bullet$ If $G$ is of type ${\rm A}$, then the factor group of semi-decomposable invariants modulo decomposable (and the group of indecomposable invariants) can have an arbitrary order and contains any  homocyclic $p$-group (see Corollary \ref{cor:abelian}).

$\bullet$ If $G$ is of type ${\rm B}$ or ${\rm C}$, then it is always a product of cyclic groups of order 2 (see Corollaries~\ref{cor:typeB}, \ref{cor:typec}). 

\noindent
We also provide a direct proof of the fact that for the simple group $G=\gPGO_8$ any semi-decomposable invariant is decomposable (Corollary~\ref{pgo8}).

\paragraph{\bf Acknowledgements.} 
S.B. was partially supported by National Research Foundation of Korea (NRF) funded by the Ministry of Science, ICT and Future Planning (2016R1C1B2010037). The first author also would like to thank the university of Ottawa for the warm hospitality during his sabbatical in the fall of 2016. R.D. was partially supported by the Fields Institute for Research in Mathematical Sciences, Toronto, Canada. R.D. and K.Z. were partially supported by the NSERC Discovery grant RGPIN-2015-04469, Canada.

\section{Syzygies and divisibility for Laurent polynomials}

Let $\Lambda$ be a free abelian group of rank $n$ with a fixed basis $\{x_1,\ldots, x_n\}$.
Let $R$ be one of the rings $\Z$ or $\Z/m\Z$, $m\ge 2$.
Consider the group ring $R[\Lambda]$. It consists of finite linear combinations $\sum_j a_j e^{\lambda_j}$, $a_j\in R$, $\lambda_j\in \Lambda$.
We identify $R[\Lambda]$ with the Laurent polynomial ring $R[x_1^{\pm 1},\ldots,x_n^{\pm 1}]$ via 
$e^{x_i}\mapsto x_i$ and $e^{-x_i}\mapsto x_i^{-1}$. By a polynomial we mean always a Laurent polynomial, i.e., an element of $R[\Lambda]$.
We denote by $\Lambda_{i}$ a free subgroup with the basis $\{x_1,\ldots,x_i\}$, $1\le i < n$. Hence, $R[\Lambda_i]=R[x_1^{\pm 1},\ldots, x_i^{\pm 1}]$. 

\begin{dfn}\label{dfn:lpres}
Given $f\in R[\Lambda]$, we can express it uniquely as
\[
f=f_k x_n^k+f_{k-1} x_n^{k-1}+ \ldots + f_m x_n^m,\; \text{ where }f_i\in R[\Lambda_{n-1}],\; k,m\in \Z,\; k\ge m.
\]
The integer $k$ is called the \emph{highest degree of $f$ with respect to $x_n$} and denoted $\hdeg_{n}(f)$.
The integer $m$ is called the \emph{lowest degree of $f$ with respect to $x_n$} and denoted $\ldeg_{n}(f)$.
The difference $k-m$ is called the \emph{degree of $f$ with respect to $x_n$} and denoted $\wdeg_{n}(f)$.
\end{dfn}

By definition, if $\wdeg_n(f)=0$, then $f$ is a product of $x_n^k$ and a polynomial in $x_1, \ldots, x_{n-1}$.

\begin{dfn}
Let $f$, $p\in  R[\Lambda]$ and let $\ldeg_{n}(f)\ge d$ for some $d\in \Z$.
We say that it is possible to perform a {\em divison of $f$ by $p$ bounded by $d$} if there exist 
monomials $q$, $r\in R[\Lambda]$ such that
\begin{enumerate}
	\item $f=pq+r$.
	\item Either $r=0$ or ($\ldeg_n(r)\ge d$ and $\hdeg_n(r)<d+\wdeg_n(p)$).
\end{enumerate}
In this case $q$ is called the \emph{quotient}, and $r$ is called the \emph{remainder}.
\end{dfn}

\begin{dfn}
We call $p \in R[\Lambda]$ a {\em divisor} with respect to $x_n$ if it satisfies the following condition:

In the presentation of Definition~\ref{dfn:lpres}
\[
p=p_k x_n^k+
\ldots+
p_m x_n^m,\quad p_i\in R[\Lambda_{n-1}],\; k,m\in \Z,\; k\ge m,
\]
the leading coefficient $p_k$ is a {\em monic} monomial in $x_1,\ldots, x_{n-1}$.
\end{dfn}

\begin{lemma}\label{intdivision}
	Let $f$, $p\in  R[\Lambda]$ and let $\ldeg_{n}(f)\ge d$ for some $d\in \Z$.
	
	If $p$ is a divisor with respect to $x_n$, then it is possible to perform a division of $f$ by $p$ bounded by $d$.
\end{lemma}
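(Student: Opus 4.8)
The plan is to mimic the classical polynomial division algorithm, but performed "from the bottom up" in the variable $x_n$, so that we control the lowest degree rather than the highest. Write $f = f_k x_n^k + \ldots + f_d x_n^d$ with $f_i \in R[\Lambda_{n-1}]$ and $k = \hdeg_n(f) \ge d = \ldeg_n(f)$ (if $\ldeg_n(f) > d$ we simply pad with zero coefficients down to degree $d$; if $f=0$ take $q=r=0$). Let $p = p_k' x_n^{k'} + \ldots + p_m' x_n^m$ be the divisor, with $k' = \hdeg_n(p)$, $m = \ldeg_n(p)$, $\wdeg_n(p) = k' - m$, and leading coefficient $p_{k'}'$ a monic monomial in $x_1,\ldots,x_{n-1}$. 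The key point enabling the division is that $p_{k'}'$, being a monic monomial, is a \emph{unit} in $R[\Lambda_{n-1}]$; thus we may always eliminate the top $x_n$-term of the current dividend by subtracting an appropriate monomial multiple of $p$.

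I would proceed by induction on $\wdeg_n(f) = k - d$. If $k - d < \wdeg_n(p)$, then $f$ itself already satisfies $\ldeg_n(f) \ge d$ and $\hdeg_n(f) = k < d + \wdeg_n(p)$, so we take $q = 0$, $r = f$. Otherwise $k \ge d + \wdeg_n(p) = d + k' - m \ge m$, and in particular $k - k' \ge d - m$. Form the monomial $q_0 = f_k (p_{k'}')^{-1} x_n^{k-k'}$; this lies in $R[\Lambda]$ since $(p_{k'}')^{-1}$ is a Laurent monomial in $x_1,\ldots,x_{n-1}$. Set $f' = f - p q_0$. By construction the coefficient of $x_n^k$ in $f'$ vanishes, so $\hdeg_n(f') \le k-1$; and since $\ldeg_n(p q_0) = m + (k - k') \ge d$, subtracting $p q_0$ does not disturb the $x_n$-degrees below $d$, so $\ldeg_n(f') \ge d$ still holds (after padding, if needed). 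Hence $\wdeg_n(f') < \wdeg_n(f)$, and by the inductive hypothesis there are a quotient $q'$ and remainder $r$ with $f' = p q' + r$ and either $r = 0$ or ($\ldeg_n(r) \ge d$ and $\hdeg_n(r) < d + \wdeg_n(p)$). Then $f = p(q_0 + q') + r$ gives the desired division, with quotient $q_0 + q'$.

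One technical remark to address in the writeup: the definition of bounded division asks for $q$ and $r$ to be \emph{monomials}, yet the algorithm above produces a quotient $q_0 + q'$ that is a sum of several monomials (one per elimination step) and a remainder $r$ that is generally a genuine polynomial in $x_n$ with several terms; I will treat "monomials" there as a typo for "polynomials" (i.e. arbitrary elements of $R[\Lambda]$), which is the only reading consistent with the statement and with its later use. The main obstacle — really the only subtlety — is bookkeeping the two-sided degree constraint: one must check at each step that subtracting $p q_0$ neither raises $\hdeg_n$ above the running bound nor pushes any term below $x_n^d$, and that the process terminates; both follow from the inequalities $\ldeg_n(p q_0) \ge d$ and $\hdeg_n(f') \le \hdeg_n(f) - 1$ recorded above, so no genuine difficulty arises beyond careful indexing.
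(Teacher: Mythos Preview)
Your proposal is correct and follows essentially the same route as the paper: both arguments induct on $\hdeg_n(f)$ (your induction on $\hdeg_n(f)-d$ is equivalent since $d$ is fixed), cancel the top $x_n$-term using that the leading coefficient of $p$ is a unit in $R[\Lambda_{n-1}]$, and verify that the subtraction keeps $\ldeg_n\ge d$. Your observation that ``monomials'' in the definition of bounded division should read ``polynomials'' is also correct and matches how the paper uses the result.
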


\begin{proof}
	We proceed by induction on $\hdeg_n(f)$. If $\hdeg_n(f)<d+\wdeg_n(p)$, then we set $q=0$ and $r=f$.
	
	Suppose that $\hdeg_n(f)\ge d+\wdeg_n(p)$.
	Since $p$ is a divisor, we can write it as 
	\[
	p=Yx_n^{k}+p', \text{ where }Y\in R[\Lambda_{n-1}]\text{ is a monic monomial and}
	\]
	$p'$ is either $0$ or a polynomial with $\hdeg_n(p')<\hdeg_n(p)=k$ and $\ldeg_n(p')=\ldeg_n(p)$.
Observe that $Y$ is invertible in $R[\Lambda_{n-1}]$.

	We write $f$ as $f=gx_n^m+f'$, where $m=\hdeg_n(f)$, $g\in R[\Lambda_{n-1}]$, 
	and $f'$ is either $0$, or a polynomial with $\hdeg_n(f')<m$ and $\ldeg_n(f')=\ldeg_n(f)$.
	
	Set $q_0=gY^{-1}x_n^{m-k}$. Then $Yx_n^kq_0=gx_n^m$. If both $f'$ and $p'$ are $0$, then $Yx_n^k=p$ and $gx_n^m=f$, 
	so $pq_0=f$, and we are done.
	
	Consider the polynomial $f''=f'-q_0p'$. We have $\hdeg_n(q_0)=\ldeg_n(q_0)=m-k$. Recall that either $p'=0$
	or ($\hdeg_n(p')<k$ and $\ldeg_n(p')=\ldeg_n(p)$). So, either $q_0p'=0$, or 
	($\hdeg_n(q_0p')<m$ and $\ldeg_n(q_0p')=m-k+\ldeg_n(p)=m-\wdeg_n(p)$).
	
	Recall also that either $f'=0$, or ($\hdeg_n(f')<m$ and $\ldeg_n(f')=\ldeg_n(f)$).
	So, if $p'$ and $f'$ are not both 0, then $\hdeg_n(f'')<m$.
	
	Also, if $p'$ and $f'$ are not both 0, then 
	$\ldeg_n(f'')\ge \min(m-\wdeg_n(p), \ldeg_n(f))$. We know that $m=\hdeg_n(f)\ge d+\wdeg_n(p)$, so $m-\wdeg_n(p)\ge d$. 
	Also, $\ldeg_n(f)\ge d$. So, $\ldeg_n(f'')\ge d$, and we can apply the induction hypothesis.
	
	By induction, there exist polynomials $q_1$ and $r$ such that 
	$f''=pq_1+r$, and (either $r=0$ or ($\ldeg_n(r)\ge d$ and $\hdeg_n(r)<d+\wdeg_n(p)$)).
	
	Set $q=q_0+q_1$. Then 
	\begin{align*}pq+r &=pq_0+pq_1+r=(Yx_n^k+p')q_0+f'' \\ 
	& =Yx_n^kq_0+p'q_0+f'-q_0p'=gx_n^m+f'=f. \qedhere\end{align*}
\end{proof}

\begin{dfn}{(cf. \cite[\S15.5]{Ei})}
	Given a $n$-tuple of polynomials $\vec{q}=(q_1,\ldots, q_n)$, a $n$-tuple  of polynomials $\vec{f}=(f_1,\ldots,f_n)$ is called a {\em syzygy} of $\vec{q}$ if $\sum_{i}f_iq_i=0$. 
	
	Observe that syzygies form a submodule of a free module of rank $n$ over $R[\Lambda]$. An element of a submodule generated by \[S_{ij}=(0,\ldots,q_j,\ldots,-q_i,\ldots,0),\]
	where $q_j$ is at the position $i$ and $-q_i$ is at the position $j$, $i,j=1,\ldots, n$ and $i\neq j$, is called a {\em trivial syzygy} of $\vec{q}$.
\end{dfn}

\begin{lemma}\label{syzygylifting}
Let $\vec{q}=(q_i)$, $q_i\in \Z[\Lambda]$ and let $\bar q=(\bar q_i)$, where $\bar q_i \in \Z/d\Z[\Lambda]$ is the reduction modulo $d$,
$d\ge 2$. 

If $\vec{f}'=(f_i')$ is a trivial syzygy of $\bar q$, then there exists a trivial syzygy $\vec{f}=(f_i)$ of $\vec{q}$ such that its reduction modulo $d$ coincides
with $\vec{f'}$, i.e., a trivial syzygy can be always lifted to $\Z$.
\end{lemma}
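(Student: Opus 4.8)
The plan is to exploit that the coefficientwise reduction homomorphism $\rho\colon \Z[\Lambda]\to \Z/d\Z[\Lambda]$ is surjective, so that the distinguished generators of the syzygy module lift. Write $S_{ij}^{q}$ for the trivial syzygy $(0,\ldots,q_j,\ldots,-q_i,\ldots,0)$ of $\vec q$ and $S_{ij}^{\bar q}$ for the corresponding trivial syzygy of $\bar q$; applying $\rho$ in each coordinate sends $S_{ij}^{q}$ to $S_{ij}^{\bar q}$.

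First I would invoke the definition of a trivial syzygy: since $\vec f'=(f_i')$ is a trivial syzygy of $\bar q$, it lies in the $\Z/d\Z[\Lambda]$-submodule generated by the $S_{ij}^{\bar q}$, so $\vec f'=\sum_{i\neq j}\bar h_{ij}\,S_{ij}^{\bar q}$ for some $\bar h_{ij}\in \Z/d\Z[\Lambda]$. For each pair $(i,j)$ pick a lift $h_{ij}\in \Z[\Lambda]$ with $\rho(h_{ij})=\bar h_{ij}$, which exists because $\rho$ is surjective.

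Now set $\vec f=\sum_{i\neq j}h_{ij}\,S_{ij}^{q}$. By construction this is a $\Z[\Lambda]$-linear combination of the $S_{ij}^{q}$, hence a trivial syzygy of $\vec q$. Applying $\rho$ coordinatewise — which commutes with the module structure because $\rho$ is a ring homomorphism — yields that the reduction of $\vec f$ modulo $d$ equals $\sum_{i\neq j}\bar h_{ij}\,S_{ij}^{\bar q}=\vec f'$, which is exactly the required lifting. There is no genuine difficulty here: the only ingredients are the surjectivity of $\rho$ and the compatibility of the generators $S_{ij}$ with reduction; in particular nothing from Lemma~\ref{intdivision} is needed, the content being simply that \emph{triviality} of a syzygy is both preserved and reflected by reduction modulo $d$.
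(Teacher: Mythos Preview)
Your proof is correct and follows essentially the same approach as the paper: write $\vec{f}'$ as a $\Z/d\Z[\Lambda]$-linear combination of the generators $\bar S_{ij}$, lift each coefficient to $\Z[\Lambda]$ using surjectivity of the reduction map, and take the corresponding combination of the $S_{ij}$ over $\Z[\Lambda]$. The paper's proof is just a terser version of what you wrote.
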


\begin{proof} Let $\bar{S}_{ij}$ be the reduction modulo $d$ of $S_{ij}$.
We have
$\vec{f}'=\sum_{i,j}g_{ij}'\bar{S}_{ij}$ for some $g_{ij}'\in \Z/d\Z[\Lambda]$.
Let $g_{ij}$ be liftings of $g_{ij}'$ to $\Z[\Lambda]$. Set $\vec{f}=\sum_{i,j} g_{ij}S_{ij}$.
\end{proof}


\begin{dfn}
We say that a $n$-tuple of polynomials $(q_1,\ldots, q_n)$ satisfies the flatness
condition if 
$q_i \in R[\Lambda_i]$ for each $i=1,\ldots,n$, and $q_i$ is a divisor with respect to $x_i$.
\end{dfn}

\begin{lemma}\label{lem:flatn}
	If an $n$-tuple of polynomials $\vec{r}$ satisfies flatness property, then all syzygies of $\vec{r}$ are trivial.
\end{lemma}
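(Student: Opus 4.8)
The plan is to prove Lemma~\ref{lem:flatn} by induction on $n$, the number of variables (equivalently, the length of the tuple $\vec r = (r_1,\ldots,r_n)$), using the division algorithm of Lemma~\ref{intdivision} as the engine. The base case $n=1$ is essentially the fact that $R[x_1^{\pm 1}]$ behaves like a PID after clearing denominators: if $f_1 r_1 = 0$ with $r_1$ a divisor (hence a nonzerodivisor, since its leading coefficient is a monic monomial, in particular a unit), then $f_1 = 0$, so the only syzygy is the zero syzygy, which is trivially a trivial syzygy. For the inductive step, suppose $\vec f = (f_1,\ldots,f_n)$ is a syzygy of $\vec r$, i.e. $\sum_i f_i r_i = 0$, and recall that by the flatness condition $r_n \in R[\Lambda_n]$ is a divisor with respect to $x_n$, while $r_1,\ldots,r_{n-1} \in R[\Lambda_{n-1}]$ do not involve $x_n$ at all.

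The key maneuver is to use Lemma~\ref{intdivision} to reduce, modulo a trivial syzygy, to the case where $f_n$ has bounded degree in $x_n$, and then to exploit the fact that the remaining relation lives essentially in $R[\Lambda_{n-1}]$. Concretely: first, choose $d$ small enough (e.g. $d \le \ldeg_n(f_i r_i)$ for all relevant $i$) and divide each $f_i$ for $i<n$ by $r_n$ bounded by $d$ — wait, more precisely, the cleaner route is to divide $f_n$ itself is not the point; rather, for each $i < n$ we can subtract from $\vec f$ a suitable $R[\Lambda]$-multiple of the trivial syzygy $S_{in} = (0,\ldots,r_n,\ldots,-r_i,\ldots,0)$ in order to replace $f_i$ by its remainder upon division by $r_n$. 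After doing this for all $i<n$, we may assume each $f_i$ ($i<n$) has $x_n$-degree strictly less than $\wdeg_n(r_n)$. Now look at the relation $\sum_{i<n} f_i r_i = -f_n r_n$. The left side has $\hdeg_n < \wdeg_n(r_n)$ plus the contribution of $r_n$'s leading behavior on the right — comparing highest and lowest degrees in $x_n$ and using that $r_n$ is a divisor (monic leading coefficient, hence a nonzerodivisor), one forces $f_n = 0$ and simultaneously $\sum_{i<n} f_i r_i = 0$ with all $f_i \in R[\Lambda_{n-1}]$. This last equation is a syzygy of $(r_1,\ldots,r_{n-1})$, which satisfies the flatness condition in $n-1$ variables, so by the induction hypothesis it is a trivial syzygy; lifting its expression in terms of the $S_{ij}$ ($i,j<n$) back into the rank-$n$ setting shows the original $\vec f$ is trivial.

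The main obstacle I anticipate is the bookkeeping in the degree comparison that forces $f_n = 0$: because we are working with Laurent polynomials, one must track both $\hdeg_n$ and $\ldeg_n$ carefully, and the statement "$p$ a divisor implies $p$ is a nonzerodivisor on $R[\Lambda]$" needs the observation that multiplying by $p$ shifts and spreads degrees predictably (the top coefficient being a unit monomial means the top coefficient of $fp$ is a unit times the top coefficient of $f$). A secondary subtlety is ensuring that the reductions of $f_i$ modulo $r_n$ via Lemma~\ref{intdivision} can indeed be realized by subtracting $R[\Lambda]$-combinations of the $S_{in}$ — this is immediate since $f_i r_n \cdot S_{\,?}$... concretely, subtracting $q_i \cdot S_{in}$ (with $q_i$ the quotient of $f_i$ by $r_n$) changes the $i$-th entry by $-q_i r_n$ and the $n$-th entry by $+q_i r_i$, i.e. replaces $f_i$ by its remainder $r_i'$ while absorbing the change into $f_n$; since $f_n$ is ultimately shown to vanish anyway, this is harmless. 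Once these two points are handled, the induction closes cleanly, and Lemma~\ref{syzygylifting} is not needed here (it will be used elsewhere), though it confirms that the notion of trivial syzygy is robust under reduction.
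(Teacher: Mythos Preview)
Your outline matches the paper's argument for the domain case almost exactly, but there is a genuine gap: the degree comparison that forces the reduced $f_n$ to vanish requires $R$ to be an integral domain. You want to conclude from $\sum_{i<n} h_i r_i = -f_n' r_n$ together with $\wdeg_n(\text{LHS}) < \wdeg_n(r_n)$ that $f_n' = 0$; this needs $\wdeg_n(f_n' r_n) \ge \wdeg_n(r_n)$, which in turn relies on the bottom coefficient of the product not cancelling. Over $R = \Z/4\Z$ with $r_n = x_n + 2$ and $f_n' = 2x_n^{-1}$ one gets $f_n' r_n = 2$, so $\wdeg_n(f_n' r_n) = 0 < 1 = \wdeg_n(r_n)$, and the argument breaks down. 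The paper accordingly restricts the degree argument to the case where $R$ is $\Z$ or $\Z/p\Z$, and then treats $R=\Z/m\Z$ with composite $m$ by a second induction on the number of prime factors of $m$: one reduces a given syzygy modulo a prime divisor, applies the domain case there, and lifts the resulting trivial syzygy back via Lemma~\ref{syzygylifting}. So, contrary to your final remark, Lemma~\ref{syzygylifting} is used precisely in this proof.

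There is also a smaller slip even in the domain case: after reducing each $f_i$ ($i<n$) modulo $r_n$ you do \emph{not} obtain $h_i \in R[\Lambda_{n-1}]$; the remainders only sit in the band $d \le \ldeg_n(h_i) \le \hdeg_n(h_i) < d + \wdeg_n(r_n)$ and can still involve $x_n$. The repair is exactly what the paper does: expand $h_i = \sum_j h_{ij} x_n^j$ with $h_{ij} \in R[\Lambda_{n-1}]$; since the $r_i$ for $i<n$ do not involve $x_n$, each coefficient tuple $(h_{1j},\ldots,h_{(n-1)j})$ is a syzygy of $(r_1,\ldots,r_{n-1})$ over $R[\Lambda_{n-1}]$, to which the induction hypothesis applies, and one reassembles.
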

\begin{proof}
	First, consider the case where $R$ is a domain (i.e., $R=\Z$ or $\Z/p\Z$ with $p$ prime).	We use induction on $n$. If $n=1$, then the trivial syzygy $0$ is the only syzygy. Let $\vec{f}=(f_{1},\ldots, f_{n})$ be a syzygy of $\vec{r}=(r_{1},\ldots, r_{n})$ with $n\geq 2$. By Lemma \ref{intdivision}, we can divide $f_{i}=r_{n}g_{i}+h_{i}$ with bound $d=\min\{\ldeg_{n}(f_{i})\}$, where $\ldeg_{n}(h_{i})\geq d$ and $\hdeg_{n}(h_{i})<d+\wdeg_{n}(r_{n})$ for $1\leq i\leq n$. 
	
	Since $(g_{1}r_{1},\ldots, g_{n-1}r_{n-1}, -\sum_{i=1}^{n-1}g_{i}r_{i})$ is a trivial syzygy of $\vec{r}$, it suffices to show that $(h_{1},\ldots, h_{n-1}, f_{n}+\sum_{i=1}^{n-1}g_{i}r_{i})$ is a trivial syzygy of $\vec{r}$. If $\sum_{i=1}^{n-1}h_{i}r_{i}=(f_{n}+\sum_{i=1}^{n-1}g_{i}r_{i})r_{n}$ is nonzero, then by taking $\wdeg_{n}$ of both sides, we obtain
	\[\wdeg_{n}(r_{n})>\wdeg_{n}(\textstyle\sum_{i=1}^{n-1}h_{i}r_{i})=\wdeg((f_{n}+\sum_{i=1}^{n-1}g_{i}r_{i})r_{n})\geq \wdeg(r_{n}),\]
	a contradiction. Thus, we have $f_{n}+\sum_{i=1}^{n-1}g_{i}r_{i}=0$ and it remains to show that $\vec{h}=(h_{1},\ldots, h_{n-1},0)$ is a trivial syzygy of $\vec{r}$. Let $e=d+\wdeg_{n}(r_{n})-1$. Write $h_{i}=h_{id}x_{n}^{d}+\cdots +h_{ie}x_{n}^{e}$ for all $1\leq i\leq n-1$ and $\vec{h}_{j}=(h_{1j},\ldots, h_{(n-1)j},0)$ for all $d\leq j\leq e$. Then, we have 
	$\vec{h}=\vec{h}_{d}x_{n}^{d}+\cdots +\vec{h}_{e}x_{n}^{e}$. By induction, all syzygies $\vec{h}_{j}$ are trivial, so is $\vec{h}$. 
	
	Now we consider the case $R=\Z/m\Z$. We proceed by induction on the number of prime factors in $m$. 
	If $m$ is a prime, it follows from the previous case.
	
	Write $m=pl$, $l>1$, where $p$ is a prime. Let $\vec{f}=(f_i)$ be a syzygy of $\vec{r}=(r_i)$ and let $\bar{f}=(\bar{f_{i}})$ be the corresponding syzygy of $\bar{r}=(\bar{r}_{i})$ over $\bar{R}=\Z/l\Z$ for $1\leq i\leq n$. By induction, we have 
	\begin{equation*}\tag{*}
	\bar{f}=\sum_{i, j}\bar{g}_{ij}\bar{S}_{ij} \quad\text{  for some } \bar{g}_{ij}\in \bar{R}[\Lambda].
	\end{equation*}
	Set \[
	\vec{f}'=\vec{f}-\sum_{i,j}g_{ij}S_{ij},\; \text{ where }g_{ij}\text{ is a preimage of }\bar{g}_{ij}\text{ in }R[\Lambda].
	\] 
	By (*) we have $f_{i}'=lf_{i}''$ for some $f_i''\in R[\Lambda]$. 
	Since $\vec{f}'$ is a syzygy of $\vec{r}$, we have $0=\sum_i f_i'r_i=l(\sum_i f_i'' r_i)$ in $R[\Lambda]$. Thus,
	$\vec{f}''=(f_i'')$  is a syzygy of $\vec{r}$ modulo $p$.
	By the previous case, $\vec{f}''$ is a trivial syzygy modulo $p$. 
	So $\vec{f}''=\sum_{i,j}g_{ij}'S_{ij}+p\vec{h}$ for some $n$-tuple of polynomials $\vec{h}$ and preimages $g_{ij}'$.
	Then $\vec{f}'=l\vec{f}''=\sum_{ij}lg_{ij}'S_{ij}$ is a trivial syzygy of $\vec{r}$.
	\end{proof}


\begin{dfn}\label{dfn:genflatcond}
We say that a $n$-tuple of polynomials $(q_1,\ldots, q_n)$ satisfies a \emph{generalized flatness condition} if
there exists a matrix $A\in GL_n(R[\Lambda])$ 
such that the $n$-tuple $(r_1,\ldots,r_n)=(q_1,\ldots,q_n)A$
satisfies the flatness condition. 
\end{dfn}

\begin{lemma}\label{lem:genflat}
Assume that a $n$-tuple of polynomials $\vec{q}=(q_1,\ldots, q_n)$ satisfies the generalized flatness condition.
Then all syzygies of $\vec{q}$ are trivial.
\end{lemma}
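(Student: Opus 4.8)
The plan is to reduce the generalized flatness condition to the plain flatness condition already handled by Lemma \ref{lem:flatn}, using the invertible change of coordinates $A$ to transport syzygies back and forth. Write $\vec r = \vec q A$, so that $\vec r$ satisfies the flatness condition. The key observation is that a syzygy of $\vec q$ is the same data as a column vector $\vec f$ (viewed as a single-column matrix) with $\vec q \cdot \vec f = 0$, and multiplying by $A$ on the appropriate side turns this into a syzygy of $\vec r$: indeed $\vec r \cdot (A^{-1}\vec f) = \vec q A A^{-1} \vec f = \vec q \cdot \vec f = 0$. So $\vec g := A^{-1}\vec f$ is a syzygy of $\vec r$, and conversely $\vec f = A\vec g$.

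By Lemma \ref{lem:flatn}, every syzygy of $\vec r$ is trivial, so $\vec g = \sum_{i<j} g_{ij} S_{ij}^{\vec r}$, where $S_{ij}^{\vec r}$ denotes the trivial syzygy $(0,\ldots,r_j,\ldots,-r_i,\ldots,0)$ built from $\vec r$. Applying $A$, we get $\vec f = \sum_{i<j} g_{ij}\, A S_{ij}^{\vec r}$, so it remains to check that each $A S_{ij}^{\vec r}$ is a trivial syzygy of $\vec q$, i.e. an $R[\Lambda]$-linear combination of the $S_{k\ell}^{\vec q} = (0,\ldots,q_\ell,\ldots,-q_k,\ldots,0)$. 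The step I expect to be the main point is this last claim: it is the statement that the module of trivial syzygies is intrinsic in the sense that it only depends on the ideal generated by the entries, not on the chosen generating tuple, and in particular is preserved under invertible base change. This follows from the standard fact (cf. \cite[\S15.5]{Ei}) that for any tuple $\vec q$ the trivial syzygies $S_{ij}^{\vec q}$ are precisely the image of the Koszul-type map $\wedge^2 R[\Lambda]^n \to R[\Lambda]^n$ sending $e_i \wedge e_j \mapsto q_i e_j - q_j e_i$, and that under the coordinate change $e_i \mapsto$ (columns of $A$) this map for $\vec r$ goes to the corresponding map for $\vec q = \vec r A^{-1}$; concretely, $A S_{ij}^{\vec r}$ expands, using $r_i = \sum_k q_k A_{ki}$, into an explicit double sum of the $S_{k\ell}^{\vec q}$ with coefficients built from the entries of $A$. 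One checks $A S_{ij}^{\vec r} = \sum_{k<\ell}(A_{ki}A_{\ell j} - A_{\ell i}A_{kj}) S_{k\ell}^{\vec q}$ by a direct computation comparing the $m$-th coordinates of both sides.

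Putting these together: given any syzygy $\vec f$ of $\vec q$, set $\vec g = A^{-1}\vec f$, which is a syzygy of $\vec r$ and hence trivial by Lemma \ref{lem:flatn}; then $\vec f = A\vec g$ is an $R[\Lambda]$-combination of the $A S_{ij}^{\vec r}$, each of which is a trivial syzygy of $\vec q$ by the computation above; therefore $\vec f$ is a trivial syzygy of $\vec q$. The only genuine work is the bookkeeping identity $A S_{ij}^{\vec r} = \sum_{k<\ell}(A_{ki}A_{\ell j} - A_{\ell i}A_{kj}) S_{k\ell}^{\vec q}$, which is routine but must be done carefully with the index conventions for $S_{ij}$ fixed in the preceding definition.
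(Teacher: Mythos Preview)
Your proposal is correct and follows essentially the same route as the paper: transport a syzygy $\vec f$ of $\vec q$ to a syzygy $\vec g=A^{-1}\vec f$ of $\vec r$, apply Lemma~\ref{lem:flatn}, and then check that each $A S_{ij}^{\vec r}$ is a trivial syzygy of $\vec q$. The paper phrases the last step by writing $S_{ij}=M_{ij}\vec r^{\,t}$ for the elementary skew-symmetric matrix $M_{ij}$ and observing that $A M_{ij}A^{t}$ is again skew-symmetric, hence an $R[\Lambda]$-combination of the $M_{k\ell}$; your explicit identity $A S_{ij}^{\vec r}=\sum_{k<\ell}(A_{ki}A_{\ell j}-A_{\ell i}A_{kj})S_{k\ell}^{\vec q}$ is precisely the entrywise form of that observation (the coefficients being the entries of $A M_{ij}A^{t}$).
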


\begin{proof}Let $A$ be a matrix such that $\vec{r}=\vec{q}A$ satisfies the flatness condition.
Let $\vec{f}=(f_1,\ldots,f_n)$ be a syzygy of $\vec{q}$. Then (as a product of matrices)
\[
0=\vec{q}\cdot \vec{f}^t=\vec{q}A\cdot A^{-1}\vec{f}^t=\vec{r}\cdot \vec{g},\text{ where }\vec{g}=A^{-1}\vec{f}^t.
\]
Hence, $\vec{g}$ is a syzygy of $\vec{r}$ and $\vec{f}=A\vec{g}$.
By Lemma~\ref{lem:flatn} it suffices to prove that if $\vec{g}=S_{ij}$ is a trivial syzygy of $\vec{r}$,
then $A\vec{g}$ is a trivial syzygy of $\vec{q}$.

Let $M_{ij}$, $i\neq j$ denote a matrix where all entries are zeros except $1$ at the position $(i,j)$ and $-1$ at the position $(j,i)$.
The matrix $M_{ij}$ is skew-symmetric.
By definition, we have $S_{ij}=M_{ij}(\vec{r})^t$.
So all trivial syzygies of $\vec{r}$ are linear combinations with coefficients in $R[\Lambda]$
of $M_{ij}\vec{r}^t$.
Similarly, all trivial syzygies of $\vec{q}$ are linear combinations 
of $M_{ij}\vec{q}^t$.

Then we obtain
$A\vec{g}=AM_{ij}\vec{r}^t=AM_{ij}A^{t}\vec{q}^t$.
Finally, since the matrix $AM_{ij}A^{t}$ is skew-symmetric, it is a linear combination with coefficients in $R[\Lambda]$ of matrices $M_{i'j'}$ for various $i'$, $j'$.
\end{proof}

\section{The generators}\label{sec:gen}

Consider the weight lattice $\Lambda$ of a semisimple root system corresponding to a group $G$. 
Let $T^*$ be a group of characters of a split maximal torus $T$ of $G$.
We assume that $T^*$ is of index 2 in $\Lambda$, i.e., $\Lambda/T^*=\Z/2\Z$. 

Consider the $\Z/2\Z$-grading on $\Lambda$ given by:
a weight $\lambda \in \Lambda$ has degree $|\lambda|$ which is 
its class in the quotient $\Lambda/T^*$.
We denote by $\Lambda^{(0)}=T^*$ the subgroup of $\Lambda$ of degree $0$ and by $\Lambda^{(1)}=\Lambda\setminus T^*$
the subset of degree $1$. 

There is an induced grading on the group ring $R[\Lambda]$ so that 
$R[\Lambda]=R[\Lambda^{(0)}]\oplus R[\Lambda^{(1)}]$. 
Hence, we can uniquely express any $f \in R[\Lambda]$ as a sum of its homogeneous components, i.e., $f=f^{(0)}+f^{(1)}$.
We say that $f\in R[\Lambda]$ is homogeneous of degree $0$ or, equivalently, $\deg(f)=0$ (resp. $f$ is of degree $1$ or $\deg(f)=1$) if $f\in R[\Lambda^{(0)}]$ (resp. $f\in R[\Lambda^{(1)}]$).

Let $\{\omega_1,\ldots,\omega_n\}$ denote the set of fundamental weights (a $\Z$-basis of $\Lambda$).
Consider the orbit $W(\omega_i)$ of the fundamental weight $\omega_i$ by means of the Weyl group $W$. 
We denote by $|i|$ the degree of $\omega_i$ with respect to the grading and by $|W(\omega_i)|$ the number of elements in the orbit.
Let \[d=\gcd_{\omega_i \in \Lambda^{(1)}}(|W(\omega_i)|)=\gcd_{|i| =1}(|W(\omega_i)|).\] 

We set $R=\Z$ and we denote by bar the reduction modulo $d$, i.e., $\bar R=\Z/d\Z$.
We define \[\rho(\omega_i)=\sum_{\lambda\in W(\omega_i)}e^{\lambda}\quad\text{ and }
\rho_i=\rho(\omega_i)-|W(\omega_i)|\in R[\Lambda].
\] 
Since the Weyl group acts trivially on $\Lambda/T^*$, we have $\deg(\rho(\omega_i))=|i|$.
Reducing modulo $d$ we obtain $\deg(\bar \rho_i)=\deg(\overline{\rho(\omega_i)})=|i|$. 

We will need the following

\begin{lemma}
\label{finaldizisibilitylemma}
Assume that $(\bar\rho_1,\ldots,\bar\rho_n)$ satisfies the generalized flatness condition with respect to some basis $\{x_i\}$ of $\Lambda$. 
Assume that $f_i\in R[\Lambda]$, $i=1,\ldots, n$ are such that $\deg(\sum_i f_i\rho_i)=0$.

Then there exist polynomials $g_1,\ldots,g_n \in R[\Lambda]$ such that
$\sum_i f_i\rho_i=\sum_i g_i\rho_i$ and 
$\bar g_i^{(1-|i|)}=0$ for each $i$.
\end{lemma}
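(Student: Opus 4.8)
The plan is to exploit the $\Z/2\Z$-grading together with the syzygy machinery of Section~2. Write $S = \sum_i f_i \rho_i$ and decompose everything into homogeneous components. Since $\deg(\rho_i) = |i|$, the degree-$1$ component of $S$ is $\sum_i f_i^{(1-|i|)}\rho_i$ (choosing the complementary degree of $f_i$ so that the product lands in degree $1$). The hypothesis $\deg(S)=0$ says exactly that this degree-$1$ part vanishes: $\sum_i f_i^{(1-|i|)}\rho_i = 0$. The idea is then to \emph{modify} the $f_i$ by something that does not change $S$ but kills the offending components $f_i^{(1-|i|)}$, and the natural candidates are trivial syzygies of $\vec\rho$, since subtracting a multiple of $S_{ij} = (\dots, \rho_j, \dots, -\rho_i, \dots)$ leaves $\sum_i f_i\rho_i$ unchanged.

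First I would reduce the vanishing relation modulo $d$: the tuple $(\bar f_1^{(1-|1|)}, \dots, \bar f_n^{(1-|n|)})$ is a syzygy of $(\bar\rho_1,\dots,\bar\rho_n)$ over $\bar R = \Z/d\Z$. By hypothesis $(\bar\rho_1,\dots,\bar\rho_n)$ satisfies the generalized flatness condition, so Lemma~\ref{lem:genflat} applies and this syzygy is \emph{trivial}: $\bar f^{(1-|i|)} = \sum_{i',j'} \bar h_{i'j'} \bar S_{i'j'}$ for some $\bar h_{i'j'} \in \bar R[\Lambda]$. Next I would lift this trivial syzygy to $\Z$ using Lemma~\ref{syzygylifting}, obtaining a trivial syzygy $\vec s = \sum_{i',j'} h_{i'j'} S_{i'j'}$ of $\vec\rho$ over $R=\Z$ whose reduction mod $d$ is $(\bar f_i^{(1-|i|)})_i$. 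Set $g_i = f_i - s_i$. Then $\sum_i g_i \rho_i = \sum_i f_i \rho_i - \sum_i s_i \rho_i = S$ because $\vec s$ is a syzygy, and $\bar g_i = \bar f_i - \bar s_i$, so $\bar g_i^{(1-|i|)} = \bar f_i^{(1-|i|)} - \bar s_i^{(1-|i|)}$.

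The remaining point — and the one requiring a little care — is that $\bar s_i^{(1-|i|)} = \bar f_i^{(1-|i|)}$, i.e.\ that subtracting the lifted trivial syzygy removes \emph{exactly} the bad homogeneous component and nothing is left over in degree $1-|i|$. Here I would observe that $\bar s_i$ and $\bar f_i^{(1-|i|)}$ agree as elements of $\bar R[\Lambda]$ by construction of the lift, hence their degree-$(1-|i|)$ components agree; but $\bar f_i^{(1-|i|)}$ is already homogeneous of degree $1-|i|$, so its own degree-$(1-|i|)$ component is itself. Therefore $\bar s_i^{(1-|i|)} = \bar f_i^{(1-|i|)}$ and $\bar g_i^{(1-|i|)} = 0$, as required. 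The one subtlety to verify is that the generators $S_{ij}$ of trivial syzygies, and the lifting in Lemma~\ref{syzygylifting}, interact correctly with the grading; since $\rho_i$ itself is not homogeneous this needs the remark that we only ever track the single component $\bar s_i^{(1-|i|)}$, which is pinned down by the mod-$d$ identity $\bar s_i = \bar f_i^{(1-|i|)}$ regardless of the grading behaviour of $S_{ij}$. I expect this bookkeeping — making sure the lift reduces to precisely the tuple $(\bar f_i^{(1-|i|)})_i$ and not merely to something congruent to it modulo a further trivial syzygy — to be the main (mild) obstacle; everything else is a direct application of Lemmas~\ref{syzygylifting} and~\ref{lem:genflat}.
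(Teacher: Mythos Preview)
Your approach is exactly the paper's: from $\deg(S)=0$ obtain a syzygy of $(\bar\rho_i)$ over $\Z/d\Z$, use generalized flatness (Lemma~\ref{lem:genflat}) to make it trivial, lift via Lemma~\ref{syzygylifting} to a trivial syzygy $(h_i)$ of $(\rho_i)$ over $\Z$, and set $g_i=f_i-h_i$.

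One genuine slip to fix: over $\Z$ the element $\rho_i=\rho(\omega_i)-|W(\omega_i)|$ is \emph{not} homogeneous when $|i|=1$, since the constant $-|W(\omega_i)|$ sits in degree $0$. Hence your equation $\sum_i f_i^{(1-|i|)}\rho_i=0$ in $R[\Lambda]$ is false in general, and you cannot ``reduce the vanishing relation modulo $d$'' because there is no such relation over $\Z$. The correct order is the reverse: reduce $S$ modulo $d$ \emph{first}. Because $d\mid |W(\omega_i)|$ for every $i$ with $|i|=1$, the reduction $\bar\rho_i$ \emph{is} homogeneous of degree $|i|$ (the paper records this just before the lemma), and then the degree-$1$ component of $\bar S=0$ reads $\sum_i \bar f_i^{(1-|i|)}\bar\rho_i=0$. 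With this adjustment the rest of your argument goes through; the bookkeeping worry you raise at the end is not an issue, since Lemma~\ref{syzygylifting} produces a lift whose reduction is literally the tuple $(\bar f_i^{(1-|i|)})_i$.
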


\begin{proof}
Since $\deg(\sum_i f_i\rho_i)=0$, we have $\sum_i \bar f_i^{(1-|i|)} \bar\rho_i =0$.
Hence, by Lemma~\ref{lem:genflat} the $n$-tuple $(\bar f_i^{(1-|i|)})$ is a trivial syzygy of $(\bar\rho_i)$.
By Lemma~\ref{syzygylifting} there exist a trivial syzygy $(h_i)$, $h_i\in R[\Lambda]$, of $(\rho_i)$ such that $\bar h_i=\bar f_i^{(1-|i|)}$.
Set $g_i=f_i-h_i$. 
\end{proof}

After a possible reindexing, we may assume that the first $n'$ fundamental weights
$\{\omega_1,\ldots,\omega_{n'}\}$ have degree $1$ and the remaining fundamental weights have degree $0$. 
For $1\le i\le n'$ we set
\[
d_i=\gcd_{i\le j\le n'} (s_j),\quad \text{ where }s_j=|W(\omega_j)|.
\]
So we have $d=d_1\mid d_2 \mid \ldots \mid d_{n'}=s_{n'}$.
By a presentation of the gcd, there exist integers (denoted $a_{i,j}$, $1\le i\le j\le n'$)
such that
\[
d_i=a_{i,i}s_i+a_{i,i+1}s_{i+1}+\ldots+a_{i,j}s_j+\ldots+a_{i,n'}s_{n'}.
\]
For $1\le i<n'$ we set 
\[
\tilde{\rho}_i=a_{i,i}\rho_i+a_{i,i+1}\rho_{i+1}+\ldots + a_{i,n'}\rho_{n'}.
\]
By definition, the coefficient of $\tilde\rho_i$ at $1=e^0$ is
\[
-a_{i,i}s_i-a_{i,i+1}s_{i+1}-\ldots -a_{i,n'}s_{n'}=-d_i.
\]
Set $\tilde \rho(\omega_i)=\tilde{\rho}_i+d_i\in R[\Lambda^{(1)}]$.

\begin{dfn}\label{dfn:genlist}
Fix $\lambda_0\in \Lambda^{(1)}$ and consider the following subsets of elements in $R[T^*]$:

\begin{itemize}
\item[(1)]  $\{h_{1,i}=e^{\lambda_0}(\frac{r_i}{s_i}\rho(\omega_i)-\frac{r_i}{d_{i+1}}\tilde{\rho}(\omega_{i+1}))\mid 1\le i<n',\; r_i=\lcm (s_i,d_{i+1})\}$.

\item[(2)] $\{h_{2,i}=\rho(\omega_i)\tilde{\rho}(\omega_1)-ds_i\mid 1\le i\le n'\}$,

\item[(3)] $\{h_{3,i}=\rho_i \mid n'<i\le n \}$.
\end{itemize}
\end{dfn}

\begin{remark}
The elements $h_{1,i}$ will be extensively used (see \eqref{eq:typeBgen} and \eqref{eq:typeCgen}) in the computations of the group of semi-decomposable invariants.
\end{remark}

Let $I$ be the augmentation ideal of $R[\Lambda]$, that is $I$ is the kernel of the map $R[\Lambda]\to R$ given by $e^{\lambda}\mapsto \lambda$.
Let $I^W_{sc}$ denote the ideal in $R[\Lambda]$ generated by elements from $R[\Lambda]^W\cap I$. By the Chevalley theorem 
$I_{sc}^{W}$ is generated by the elements $\rho_i$, $1\le i\le n$, i.e., any $f\in I_{sc}^{W}$ can be written as
$f=f_1\rho_1+\ldots +f_n\rho_n$ for some $f_i\in R[\Lambda]$.

Our main result is the following

\begin{theorem}\label{thm:maingen} Assume that the $n$-tuple
$(\rho_1,\ldots,\rho_n)$ satisfies the generalized flatness condition with respect to some basis $\{x_i\}$ of the weight lattice $\Lambda$. 

Then the elements $h_{k,i}$ of Definition~\ref{dfn:genlist} generate the ideal $I_{sc}^W\cap R[T^*]$ in $R[T^*]$.
\end{theorem}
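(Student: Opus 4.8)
The plan is to prove the two inclusions separately. The easy direction is to check that every $h_{k,i}$ of Definition~\ref{dfn:genlist} actually lies in $I_{sc}^W\cap R[T^*]$. Membership in $R[T^*]$ is built into the construction: each $\rho(\omega_i)$, $\tilde\rho(\omega_i)$ with $|i|=1$ has degree $1$, so products of two of them (as in $h_{2,i}$) and the combinations $e^{\lambda_0}(\,\cdot\,)$ in $h_{1,i}$ have degree $0$, while $h_{3,i}=\rho_i$ for $i>n'$ already has degree $0$; and membership in $I_{sc}^W$ follows because each $h_{k,i}$ is manifestly an $R[\Lambda]$-combination of the $\rho_j$'s (for $h_{2,i}$ one writes $\rho(\omega_i)\tilde\rho(\omega_1)-ds_i = \rho_i\tilde\rho(\omega_1) + s_i(\tilde\rho(\omega_1)-d) = \rho_i\tilde\rho(\omega_1)+s_i\tilde\rho_1$, visibly in $I_{sc}^W$). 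So the subtlety is entirely in the reverse inclusion.

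For the reverse inclusion, take $f\in I_{sc}^W\cap R[T^*]$ and write $f=\sum_i f_i\rho_i$ with $f_i\in R[\Lambda]$ by Chevalley. Since $\deg(f)=0$, Lemma~\ref{finaldizisibilitylemma} lets us replace the $f_i$ by $g_i$ with $\sum g_i\rho_i=f$ and $\bar g_i^{(1-|i|)}=0$ for every $i$; that is, for $i>n'$ (where $|i|=0$) the degree-$1$ part of $g_i$ is divisible by $d$, and for $i\le n'$ (where $|i|=1$) the degree-$0$ part of $g_i$ is divisible by $d$. The next step is a normalization: using that $\deg(\rho_i)=|i|$ and comparing homogeneous components of $f=\sum g_i\rho_i$, I would separate the sum into its type-$0$ contributions ($g_i^{(0)}\rho_i$ for $i>n'$, giving directly the $h_{3,i}$'s, plus $g_i^{(1)}\rho_i$ for $i\le n'$) and type-$1$ contributions, and use the divisibility $d\mid g_i^{(0)}$ for $i\le n'$ together with the defining relation $d=\gcd(s_j)$ to rewrite $\sum_{i\le n'} g_i^{(0)}\rho_i$ in terms of the $\tilde\rho_j$'s. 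Concretely, the $\tilde\rho_i$ were engineered so that $\tilde\rho(\omega_i)$ has constant term $d_i$ and, because $d_1=d$, the element $\tilde\rho(\omega_1)$ plays the role of a "universal" degree-$1$ multiplier; multiplying through by $e^{\lambda_0}$ or by $\tilde\rho(\omega_1)$ converts odd-degree data into even-degree data controlled by the $h_{1,i}$ and $h_{2,i}$.

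The heart of the argument—and the step I expect to be the main obstacle—is the bookkeeping that shows every remaining term, after the syzygy reduction of Lemma~\ref{finaldizisibilitylemma}, can be absorbed into the three families. The point is that the $h_{1,i}$'s encode exactly the "telescoping" relations among the $\frac{1}{s_i}\rho(\omega_i)$ that arise from the chain $d=d_1\mid d_2\mid\cdots\mid d_{n'}$: the coefficient $r_i=\lcm(s_i,d_{i+1})$ is chosen so that $\frac{r_i}{s_i}\rho(\omega_i)$ and $\frac{r_i}{d_{i+1}}\tilde\rho(\omega_{i+1})$ have the same constant term $r_i$, hence their difference lies in the augmentation ideal and (after the $e^{\lambda_0}$ twist) in $R[T^*]$. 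One must verify that an arbitrary $R[T^*]$-combination $\sum_{i\le n'} c_i\rho(\omega_i)$ with total constant term $\equiv 0$, and with each $c_i$ of degree $1$, can be rewritten modulo the $h_{1,i}$ and $h_{2,i}$ so that all $\rho(\omega_i)$ with $i<n'$ are eliminated and one is left with a multiple of $h_{2,n'}$ (or of $\rho_{n'}$ times something). I would do this by descending induction on $i$: at stage $i$, use $h_{1,i}$ to trade the $\rho(\omega_i)$-term for a $\tilde\rho(\omega_{i+1})$-term (possible because the relevant coefficients are divisible by the appropriate $d_{i+1}$, which is where $d\mid g_i^{(0)}$ and the gcd presentation $d_i=\sum_{j\ge i} a_{i,j}s_j$ get used), and then fold the result into the $i{+}1$ stage; the base case $i=n'$ is handled by $h_{2,n'}$. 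Throughout, one has to keep track of which polynomials are allowed to be genuine elements of $R[T^*]$ versus merely $R[\Lambda]$, and the $e^{\lambda_0}$ factors in the $h_{1,i}$ are precisely the device that keeps everything inside $R[T^*]$; getting these parity/degree bookkeeping details exactly right, so that no spurious denominators or odd-degree leftovers remain, is the technically delicate part.
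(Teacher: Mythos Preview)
Your overall strategy matches the paper's: after invoking Lemma~\ref{finaldizisibilitylemma} to obtain $d\mid g_i^{(1-|i|)}$, one subtracts $R[T^*]$-multiples of the $h_{k,i}$ in a controlled order until all coefficients vanish. Two points, however, need correction.

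First, you write ``using that $\deg(\rho_i)=|i|$'' and propose to split $f$ cleanly into type-$0$ and type-$1$ contributions. But for $i\le n'$ the element $\rho_i=\rho(\omega_i)-s_i$ is \emph{not} homogeneous: $\rho(\omega_i)$ has degree~$1$ while $s_i$ has degree~$0$ (only $\bar\rho_i$ is homogeneous, since $d\mid s_i$). So the decomposition ``$g_i^{(1)}\rho_i$ is the degree-$0$ part for $i\le n'$'' is wrong, and the separation you propose does not cleanly isolate the pieces. The paper avoids this by never trying to split $\rho_i$; instead it works directly with the coefficients $f_i^{(0)}$ and $f_i^{(1)}$ and kills them one at a time, keeping track of which components of the \emph{other} $f_j$ are left undisturbed.

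Second, and more seriously, you do not address how to eliminate $g_i^{(1)}$ for $i>n'$. You say ``$g_i^{(0)}\rho_i$ for $i>n'$, giving directly the $h_{3,i}$'s'', which is fine, but the odd part $g_i^{(1)}\rho_i$ cannot be subtracted as $g_i^{(1)}h_{3,i}$ because $g_i^{(1)}\notin R[T^*]$. The paper's Step~2 handles exactly this: since $d\mid g_i^{(1)}$ by the lemma, one subtracts $\tfrac{1}{d}g_i^{(1)}\tilde\rho(\omega_1)\cdot h_{3,i}$, where the coefficient $\tfrac{1}{d}g_i^{(1)}\tilde\rho(\omega_1)$ now lies in $R[T^*]$ (degree $1+1=0$). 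This step is essential and has no counterpart in your outline. Once it is done, Steps~1--3 arrange that $f_j=0$ for $j>n'$ and $f_j^{(0)}=0$ for $j\le n'$; only then does the constant-term argument $s_if_i^{(1)}=-\sum_{j>i}s_jf_j^{(1)}$ yield the divisibility $\tfrac{r_i}{s_i}\mid f_i^{(1)}$ needed to apply $h_{1,i}$. Your claim that this divisibility comes from ``$d\mid g_i^{(0)}$'' misidentifies its source.
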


\begin{proof}
Suppose that $f_1\rho_1+\ldots+f_n\rho_n\in R[T^*]$ for some $f_i\in R[\Lambda]$.
By Lemma \ref{finaldizisibilitylemma} we may assume that $d\mid f_i^{(1-|i|)}$ for each $i$.

To prove the theorem we modify $(f_1,\ldots, f_n)$ in several steps. 
At each step, we subtract a linear combination of the elements $h_{i,j}$
(with coefficients in $R[T^*]$) from $f_1\rho_1+\ldots+f_n\rho_n$ so that the new polynomials $f_1',\ldots, f_n'$ have less non-zero monomials. 
In the end they will all become $0$, so that the original $f_1\rho_1+\ldots+f_n\rho_n$ will be replaced by a linear 
combination of $h_{i,j}$.

\noindent
{Step 1.} 
By definition we have for $1\le i\le n'$
\begin{align*}
h_{2,i} &=\rho(\omega_i)\tilde{\rho}(\omega_1)-ds_i=\rho(\omega_i)(\tilde{\rho}_1+d_1)-ds_i \\
 &=\rho(\omega_i)(a_{1,1}\rho_1+a_{1,2}\rho_{2}+\ldots + a_{1,n'}\rho_{n'}+d)-ds_i \\
 &=a_{1,1}\rho(\omega_i)\rho_1+\ldots+(a_{1,i}\rho(\omega_i)+d)\rho_i +\ldots +a_{1,n'}\rho(\omega_i)\rho_{n'}.
\end{align*}
Since $|i|=1$, $d\mid f_i^{(0)}$. Consider the difference
\begin{equation}\tag{*}
f_1'\rho_1+\ldots+f_n'\rho_n=f_1\rho_1+\ldots+f_n\rho_n-\tfrac{1}{d}f_i^{(0)}h_{2,i}.
\end{equation}
Collecting the coefficients we obtain 
\begin{itemize}
\item
$f_j'=f_j$ for all $j>n'$, 
\item $f_j'^{(0)}=f_j^{(0)}$ for all $j\le n'$ and $j\neq i$, 
\item $f_i'^{(0)}=0$.
\end{itemize}
Hence, applying~(*) for each $i$, $1\le i\le n'$
we obtain new coefficients $(f_1',\ldots,f_n')$ such that 
$f_j'^{(0)}=0$ for all $j\le n'$ and $f_j'=f_j$ for all $j>n'$.
(Observe that $f_j'^{(1)}$ for $j\le n'$ does not necessarily coincides with $f_j^{(1)}$.)

\noindent
{Step 2.}  We have for $n'< i \le n$
\begin{align*}
\tilde{\rho}(\omega_1) h_{3,i}&=\tilde{\rho}(\omega_1)\rho_i=(a_{1,1}\rho_1+a_{1,2}\rho_{2}+\ldots + a_{1,n'}\rho_{n'}+d)\rho_i \\
 &=(a_{1,1}\rho_i)\rho_1+\ldots +(a_{1,n'}\rho_i)\rho_{n'}+d\rho_i.
\end{align*}
Since $|i|=0$, $d\mid f_i^{(1)}$. Consider the difference
\begin{equation}\tag{**}
f_1'\rho_1+\ldots+f_n'\rho_n=f_1\rho_1+\ldots+f_n\rho_n-\tfrac{1}{d}f_i^{(1)}\tilde{\rho}(\omega_1)h_{3,i}.
\end{equation}
Collecting the coefficients we obtain 
\begin{itemize}
\item
$f_j'=f_j$ for all $j>n'$ and $j\neq i$, 
\item $f_j'^{(0)}=f_j^{(0)}$ for all $j\le n'$.
\item $f_i'^{(1)}=0$.
\end{itemize}
Hence, applying~(**) for each $i$, $n'< i\le n$
we obtain new coefficients $(f_1',\ldots,f_n')$ such that 
$f_j'^{(1)}=0$ for all $j>n'$ and $f_j'^{(0)}=0$ for all $j\le n'$.

\noindent
{Step 3.}  As a result of step 2, we have $f_i\in R[T^*]$ for all $i>n'$. Subtracting 
\[
f_1\rho_1+\ldots +f_n\rho_n-\sum_{i>n'}f_i h_{3,i}
\]
we may assume that  $f_i=0$ for all $i>n'$.

\noindent
{Step 4.} Fix $i$, $1\le i\le n'$. If $i>1$ we assume in addition that $f_1=\ldots=f_{i-1}=0$. 
So, we have $f_i\rho_i+\ldots+f_{n'}\rho_{n'}\in R[T^*]$, where $f_j^{(0)}=0$ for all $i\le j \le n'$ by previous steps.
Hence, we can express it as
\[
f_i\rho_i+\ldots+f_{n'}\rho_{n'}=f_i^{(1)}\rho(\omega_i)+\ldots +f_{n'}^{(1)}\rho(\omega_{n'})-(s_if_i^{(1)}+\ldots+ s_{n'}f_{n'}^{(1)})\in R[T^*].
\]
Since $\deg(\rho(\omega_j))=1$ for $i\le j\le n'$, we obtain
\[
s_if_i^{(1)}=-s_{i+1}f_{i+1}^{(1)}-\ldots -s_{n'}f_{n'}^{(1)}.
\]
The right hand side of this equation is divisible by $r_i$, hence, $\tfrac{r_i}{s_i}\mid f_i^{(1)}$.

By definition, we have
\begin{align*}
h_{1,i} &=e^{\lambda_0}(\tfrac{r_i}{s_i}\rho(\omega_i)-\tfrac{r_i}{d_{i+1}}\tilde{\rho}(\omega_{i+1}))=
e^{\lambda_0}(\tfrac{r_i}{s_i}\rho_i-\tfrac{r_i}{d_{i+1}}\tilde{\rho}_{i+1})\\
&=e^{\lambda_0}(\tfrac{r_i}{s_i}\rho_i-\tfrac{r_i}{d_{i+1}}(a_{i+1,i+1}\rho_{i+1}+a_{i+1,i+2}\rho_{i+2}+\ldots + a_{i+1,n'}\rho_{n'} ))
\end{align*}
Consider the difference
\begin{equation}\tag{***}
f_i'\rho_i+\ldots+f_{n'}'\rho_{n'}=f_i\rho_i+\ldots+f_{n'}\rho_{n'}-\tfrac{s_i}{r_i}f_i^{(1)}e^{-\lambda_0}h_{1,i}.
\end{equation}
Collecting the coefficients we obtain $f_i'=0$ while keeping $f_j'^{(0)}=f_j^{(0)}=0$ for all $i<j\le n'$.
Hence, applying~(***) inductively starting with $i=1$, we obtain that $f_i=0$ for all $1\le i\le n'$. 
\end{proof}

\section{The generalized flatness condition}

In the present section we prove that the $n$-tuple of $W$-orbits $(\rho_1,\ldots,\rho_n)$ in $R[\Lambda]$
satisfies the generalized flatness condition when $\Lambda$ is a weight lattice for a semi-simple root system of type ${\rm A}$ and ${\rm C}$.
Observe that it is enough to prove the generalized flatness condition for each simple component.

\subsection
{Type ${\rm A}$}
Let $\widetilde\Lambda=\Z^{n+1}$ with a standard basis $e_1,\ldots,e_{n+1}$.
The weight lattice of type ${\rm A}$ is then given by $\Lambda=\widetilde\Lambda/(e_1+\ldots+e_{n+1})$. 
We denote the class of $e_i$ in $\Lambda$ by $\bar e_i$.
The basis of $\Lambda$ is given by the fundamental weights $\omega_i=\bar e_1+\ldots +\bar e_i$, $i=1,\ldots, n$.
The Weyl group (the symmetric group $S_{n+1}$) acts by permutations of $\{e_1,\ldots,e_{n+1}\}$. 
Let $x_i=e^{\omega_i}$ in $\Z[\Lambda]$ and let $y_i=e^{e_i}$ in $\Z[\tilde \Lambda]$.

Consider the induced map $\phi\colon\Z[\tilde\Lambda] \to \Z[\Lambda]$ given by $\phi(y_1)=x_1$, $\phi(y_i)=e^{\bar e_i}=x_ix_{i-1}^{-1}$, $1<i\le n$ and $\phi(y_{n+1})=x_n^{-1}$. The image of the elementary symmetric function $\sigma_i=\sigma_i(y_1,\ldots,y_{n+1})$
gives the $W$-orbit $\rho(\omega_i)$.

Let $g_i$ be (the complete sum symmetric function) the sum of all monomials of total degree $i$ in variables $y_1,\ldots,y_{n+2-i}$. We have the following analogue of the Newton relation (see \cite[Relation~(2)]{MS}) for $i>0$
\begin{equation}\label{eq:newton}
\sum_{j=0}^{i}(-1)^j g_{j}(y_1,\ldots,y_{n+2-j})\sigma_{i-j}(y_1,\ldots,y_{n+1-j})=0\quad (\text{here }g_0=\sigma_0=1)
\end{equation}
which implies that the ideal $I_\sigma=(\sigma_1,\ldots,\sigma_{n+1})$ in $\Z[y_1,\ldots,y_{n+1}]$ 
coincides with the ideal $I_g=(g_1,\ldots, g_{n+1})$.
Consider  the involution $\tau$ of $\Z[y_1,\ldots,y_{n+1}]$
given by $y_i\mapsto 1-y_i$. We get
\[
(\sigma_1-s_1,\ldots, \sigma_{n+1}-s_{n+1})=\tau(I_\sigma)=\tau(I_g)=(\tilde{g}_1,\ldots,\tilde{g}_{n+1}),
\]
where $s_i=\sigma_i(1,\ldots, 1)=|W(\omega_i)|$ and $\tilde{g}_i$ is the (non-homogeneous) polynomial in variables $y_1,\ldots,y_{n+2-i}$ of degree $i$ such that its coefficient at $y_{n+2-i}^i$ is $\pm 1$. Taking its images in $\Z[\Lambda]$ we obtain
\[
(\rho_1,\ldots,\rho_n)=\phi(\tau(I_\sigma))=\phi(\tau(I_g))=(r_1,\ldots,r_{n+1}),
\]
where $\rho_i=\rho(\omega_i)-s_i$ and $r_i=\phi(\tilde{g}_{n+2-i})$.
We claim that $r_{n+1}$ can be written as a linear combination of $r_1,\ldots,r_n$. Indeed,
taking the sum of relations~\eqref{eq:newton} for all $i$ we obtain
\[
1=\sum_{i=0}^{n+1} (g_i\prod_{j=1}^{n+1-i} (1-y_j)).
\]
After applying $\tau$ we obtain 
$1=\sum_{i=0}^{n+1} \tilde{g}_i y_1\ldots y_{n+1-i}$.
Since $y_1\ldots y_{n+1}=1$, after taking its image in $\Z[\Lambda]$ we obtain the desired linear combination.

\subsection{Type C}
Consider the weight lattice $\Lambda$ of type ${\rm C}$. It is generated by the standard vectors $\{e_1,\ldots,e_n\}$
with fundamental weights $\omega_i=e_1+\ldots+e_i$. The Weyl group $W$ acts on the standard vectors by permutations and changing signs.
Consider the embedding $\phi\colon \Z[y_1,\ldots,y_n]\hookrightarrow \Z[\Lambda]$ given by $\phi(y_i)=e^{e_i}+e^{-e_i}$.
The image of the elementary symmetric function $\sigma_i=\sigma_i(y_1,\ldots,y_n)$ gives the $W$-orbit $\rho(\omega_i)$.

As in type ${\rm A}$ let $g_i$ be the sum of all monomials of total degree $i$ in variables $y_1,\ldots,y_{n+1-i}$.
Then $(\sigma_1,\ldots,\sigma_n)=(g_1,\ldots,g_n)$ as ideals in $\Z[y_1,\ldots,y_n]$. Applying the involution $\tau$ we obtain
$(\rho_1,\ldots,\rho_n)=(r_1,\ldots,r_n)$, where $\rho_i=\rho(\omega_i)-s_i$, $r_i=\phi(\tau(g_{n+1-i})$ and $s_i=|W(\omega_i)|$.
The $n$-tuple $(r_1,\ldots,r_n)$ satisfies flatness condition and there is an invertible transformation matrix between $\rho_i$ and $r_i$.

\section{Characters and invariants}

In the present section we introduce some notation and recall basic definitions for the group of characters, characteristic classes and invariants
which will be used in the subsequent sections. We follow \cite{Mer}, \cite{MNZ} and \cite{Bae}.

\subsection{Characters.}\label{par:char}
Let $H$ and $H'$ be simply connected simple split groups of the same Dynkin type $\mathcal{D}$ over a field $F$. Assume that there is a central diagonal subgroup $\mu_k$ in $H\times H'$.
The quotient $G=(H\times H')/\mu_k$ will be called a {\em group of index k} of type $\mathcal{D}$.

We denote by $T_{sc}$ the split maximal torus of $H\times H'$, by $T$ the split maximal torus of $G$ and by $T_{ad}$ the split maximal torus
of the product of the adjoint forms $H_{ad}\times H_{ad}'$.
Then there is an exact sequence for the groups of characters
\[
0\to T^*/T^*_{ad} \to T^*_{sc}/T^*_{ad} \to \Z/k\Z \to 0
\]
which can be used to describe $T^*$. Indeed, the quotient $T^*_{sc}/T^*_{ad}$ is the group of characters of the center $Z(G)$ 
and the map $T^*_{sc}/T^*_{ad} \to \Z/k\Z$ is induced by the diagonal embedding $\mu_k\to Z(G)$. 
Moreover, $T^*_{sc}/T^*_{ad}$ is the product of groups of characters of the centers of $H$ and $H'$, hence, 
\[
T^*_{sc}/T^*_{ad}=\Lambda_w/\Lambda_r \oplus \Lambda_w'/\Lambda_r' \to \Z/k\Z,
\] 
is given by taking the sum,
where $\Lambda_w$ (resp. $\Lambda_w'$) is the weight lattice and $\Lambda_r$ (resp. $\Lambda_r'$) is the root lattice of $H$ (resp. of $H'$).

\subsection{Invariant forms.}\label{par:invar}
Let $W=W_H\times W_{H'}$ be the Weyl group of $H\times H'$.
It acts on $T_{sc}^*=\Lambda_w \oplus \Lambda_w'$ as the Weyl groups of $H$ and $H'$.
Consider the group of $W$-invariant quadratic forms. It is a direct sum of cyclic groups 
\[
S^2(T_{sc}^*)^W=S^2(\Lambda_w)^{W_H}\oplus S^2(\Lambda_w')^{W_{H'}}=\Z q\oplus \Z q', \]
where $q$ and $q'$ are generators given by normalized Killing forms.
So any form $\phi \in S^2(T_{sc}^*)^W$ can be written uniquely as $\phi=dq+d'q'$, $d,d'\in \Z$.
The list of Killing forms for all types can be found in \cite[\S4]{Mer}.

Let 
$\{\omega_i\}$ and $\{\omega_i'\}$ denote the fundamental weights of $H$ and $H'$, i.e., the $\Z$-bases of $\Lambda_w$ and $\Lambda_w'$.
Choose a $\Z$-basis $\{x_i\}$ of $T^*$. Expressing each $\omega_i$ and $\omega_i'$ in terms of $x_j$'s  and substituting into $\phi$
allows to explicitly describe the subgroup \[Q(G)=S^2(T^*)^W=S^2(T^*)\cap S^2(T^*_{sc})^W.\]

\subsection{Characteristic map} Consider the group ring $\Z[T^*_{sc}]$ that is the representation ring $R(T_{sc})$ of $T_{sc}$.
Each element of $\Z[T^*_{sc}]$ can be written as a finite linear combination $\sum_i a_i e^{\lambda_i}$, $a_i\in \Z$, $\lambda_i\in T^*_{sc}$.
Fix a basis of $T_{sc}^*$ consisting of fundamental weights $\{\omega_i\}$ and $\{\omega_j'\}$.
Following \cite[\S3c]{Mer} and \cite{GaZa} define the map
\[
c_2\colon \Z[T^*_{sc}] \to S^{\le 2}(T^*_{sc}) \to S^2(T_{sc}^*)
\]
by sending $1\mapsto 1$, $e^{-\omega_i}\mapsto 1-\omega_i$ and $e^{\omega_i}\mapsto 1+\omega_i+\omega_i^2$ (resp. for $\omega_j'$) and then taking the degree 2 homogeneous component.

Let $I_{sc}$ denote the augmentation ideal in $\Z[T^*_{sc}]$, i.e., the kernel
of the trace map $\Z[T^*_{sc}]\to \Z$, $e^{\lambda}\mapsto 1$. Then the image $c_2(I_{sc}^3)=0$, so $c_2$ can be restricted to $I_{sc}^2$\cite{GaZa}.

Observe that the filtration by powers of the ideal $I_{sc}$ can be viewed as a $\gamma$-filtration on $K_0(BT)$; its image
in $K_0(G/B)$ via the characteristic map $c\colon \Z[T_{sc}^*] \to K_0(G/B)$ gives the Grothendieck $\gamma$-filtration on $K_0(G/B)$ (e.g., see \cite{Za}).

\subsection{Invariants}
Given $\lambda\in T^*$ denote by $\rho(\lambda)=\sum_{\chi\in W(\lambda)} e^\chi$, where $W(\lambda)$ is the $W$-orbit of $\lambda$.
If restricted to invariants, the map $c_2$ defines a group homomorphism \[
c_2\colon \Z[T^*]^W \to S^2(T^*)^W\] with image
generated by forms $c_2(\rho(\lambda))=-\tfrac{1}{2}\sum_{\chi \in W(\lambda)}\chi^2$ 
for all $\lambda\in T^*$ \cite[\S3c]{Mer}.
It was shown in \cite{Mer} that the image $c_2(\Z[T^*]^W)$ can be identified with the group of degree 3 decomposable invariants $\Dec(G)$ and
the quotient $Q(G)/\Dec(G)$ with the group of indecomposable invariants denoted by $\Inv(G)$.
By definition for any two semisimple groups $G_1$, $G_2$ we have 
\begin{equation}\label{eq:product}
Q(G_1\times G_2)=Q(G_1)\times Q(G_2)\text{ and }\Dec(G_1\times G_2)=\Dec(G_1)\times \Dec(G_2).
\end{equation}

Let $I_{sc}^W$ denote an ideal in $\Z[T_{sc}^*]$ generated by $W$-invariants from the augmentation ideal $I_{sc}$, namely, $I_{sc}^W=(I_{sc}\cap \Z[T_{sc}^*]^W)$.
The main result of \cite{MNZ} says that the image $c_2(\Z[T^*]\cap I_{sc}^W)$ in $S^2(T^*)^W$ coincides with the subgroup of semi-decomposable invariants $\Sdec(G)$ and that $\Dec(H)=\Sdec(H)$ if $H$ is a simple group.
Observe that we have 
\begin{equation}\label{eq:inclus}
\Dec(G)\subseteq \Sdec(G) \subseteq Q(G).
\end{equation}
We denote by $\Sinv(G)$ the quotient $\Sdec(G)/\Dec(G)$.

\section{Type ${\rm A}$}
In the present section we consider semisimple groups of type ${\rm A}$.
The following lemma gives a simple geometric proof for the coincidence between the normalized invariants and semi-decomposable invariants (c.f. \cite{Bae}):

\begin{lemma}\label{firstlemmaA}
	Let $G=(\gSL_{m}\times \gSL_{n})/\gmu$, $m, n, k\geq 2$, where $\gmu\simeq \gmu_{k}$ is a diagonal $(central)$ subgroup. Then, $Q(G)=\Sdec(G)$, i.e., each degree $3$ indecomposable invariant of $G$ is semi-decomposable.
\end{lemma}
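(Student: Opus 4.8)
The plan is to identify $Q(G)$ and $\Sdec(G)$ explicitly via the machinery set up in Sections 2--4, and then match them. By \S\ref{par:invar} and \S\ref{par:char}, $Q(G) = S^2(T^*)^W = S^2(T^*) \cap S^2(T^*_{sc})^W$, where $S^2(T^*_{sc})^W = \Z q \oplus \Z q'$ with $q, q'$ the normalized Killing forms of $\SL_m$ and $\SL_n$; the condition that a form $dq + d'q'$ descend to $S^2(T^*)$ is a congruence on $(d,d')$ coming from the index-$k$ data in the exact sequence $0 \to T^*/T^*_{ad} \to T^*_{sc}/T^*_{ad} \to \Z/k\Z \to 0$. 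On the other side, by the main result of \cite{MNZ} recalled in \S\ref{par:invar}, $\Sdec(G) = c_2(\Z[T^*] \cap I_{sc}^W)$, while $\Dec(G) = c_2(\Z[T^*]^W)$. Since $\Dec(G) \subseteq \Sdec(G) \subseteq Q(G)$ always, it suffices to show $Q(G) \subseteq \Sdec(G)$, i.e., that every invariant quadratic form on $T^*$ is realized as $c_2$ of an element of $\Z[T^*] \cap I_{sc}^W$.

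First I would recall that for a single $\SL_N$ the $W$-orbits $\rho(\omega_i)$, hence the generators $\rho_i = \rho(\omega_i) - s_i$ of $I_{sc}^W$, are completely understood, and the generalized flatness condition for type ${\rm A}$ holds by Section~4; so Theorem~\ref{thm:maingen} gives an explicit finite generating set $\{h_{k,i}\}$ for the ideal $I_{sc}^W \cap R[T^*]$ inside $R[T^*]$. The key computation is then to evaluate $c_2$ on these generators: because $c_2$ kills $I_{sc}^3$ and is additive on the relevant pieces, $c_2(h_{k,i})$ is a small explicit quadratic form, and I would check that as $i$ and $k$ vary these span exactly the sublattice of $\Z q \oplus \Z q'$ cut out by the index-$k$ congruence, i.e., precisely $Q(G)$. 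The geometric flavour the lemma advertises enters here: one may instead argue that the versal flag $X$ of $G$ has $K_0(X) \simeq R(T)/(I_{sc}^W \cap R(T))$ by \cite[Ex.5.4]{GiZa}, and the $\gamma$-filtration picture of \S\ref{par:invar} lets one read off $\Sdec(G)$ from the codimension-$2$ part of $K_0(X)$, which is controlled by the same generators.

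The main obstacle I anticipate is the bookkeeping that matches the congruence defining $Q(G) \subseteq \Z q \oplus \Z q'$ with the $\gcd$'s $d = d_1 \mid d_2 \mid \cdots \mid d_{n'}$ and the coefficients $r_i = \lcm(s_i, d_{i+1})$ appearing in Definition~\ref{dfn:genlist}: one must verify that the sizes $s_i = \binom{N}{i}$ of the $W$-orbits for $\SL_N$, combined across the two simple factors and cut down by the diagonal $\mu_k$, produce exactly the image lattice $c_2(\Z[T^*] \cap I_{sc}^W)$ and not a proper sublattice. Concretely, the delicate point is showing surjectivity onto $Q(G)$ — that no invariant form on $T^*$ is missed — since the reverse inclusion $\Sdec(G) \subseteq Q(G)$ is formal. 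I would handle this by exhibiting, for each generator of the lattice $Q(G)$, an explicit preimage built from the $h_{2,i}$ (whose $c_2$ image is a multiple of the Killing form of the first factor, twisted by $\rho(\omega_i)$) together with the mixed generators across the two factors, reducing the claim to an elementary number-theoretic identity among binomial coefficients and $k$.
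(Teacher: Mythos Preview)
Your approach is genuinely different from the paper's, and as written it has a gap. The machinery of Section~\ref{sec:gen} and Theorem~\ref{thm:maingen} is set up only under the standing hypothesis that $T^*$ has index $2$ in $\Lambda$ (see the opening of Section~\ref{sec:gen}: ``We assume that $T^*$ is of index $2$ in $\Lambda$, i.e., $\Lambda/T^*=\Z/2\Z$''). The entire construction of the generators $h_{k,i}$ in Definition~\ref{dfn:genlist}, the $\Z/2\Z$-grading, and the constant $d$ all depend on this. But Lemma~\ref{firstlemmaA} is stated for an arbitrary diagonal $\gmu_k$ with $k\ge 2$, so for $k>2$ you simply cannot invoke Theorem~\ref{thm:maingen} or its generators. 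Your plan therefore handles at most the case $k=2$, and even there you would still owe the nontrivial verification that the $c_2$-images of the $h_{k,i}$ span all of $Q(G)$ rather than a proper sublattice---a computation the paper never performs and which your proposal flags as the ``delicate point'' without resolving.

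The paper's proof is short and geometric, bypassing all of this. It uses the main theorem of \cite{MNZ}, which identifies $Q(G)/\Sdec(G)$ with the torsion in $\CH^2(X)$ for the versal flag $X$. For $G=(\gSL_m\times\gSL_n)/\gmu_k$, the versal flag is replaced by a product of Severi--Brauer varieties $\SB(A)\times\SB(A')$ with $[A]=[A']$ of exponent $k$ in the Brauer group; the relevant torsion then coincides with $\CH^2(\SB(B))_{\torsion}$ for a single division algebra $B$ of index and exponent $k$, and this vanishes by Karpenko \cite{Kar95}. So the paper never computes $Q(G)$ or $\Sdec(G)$ explicitly---it shows their quotient is zero via Chow-group torsion. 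Your brief aside about the $\gamma$-filtration on $K_0(X)$ gestures toward this picture but does not use the crucial \cite{MNZ} identification or the Severi--Brauer reduction.
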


\begin{proof}
	We follow arguments used in \cite[\S5]{Bae}. Assume \[
	\gmu=\{(\lambda, \lambda')\in \gmu_{m}\times \gmu_{n}\mid \lambda^{k}=\lambda'^{k}=1,\, \lambda=\lambda'\}\]
	is the diagonal subgroup. The corresponding versal flag variety $X$ over the function field $F'$ of the classifying space of $G$ can be replaced by the product of Severi-Brauer varieties $\SB(A)\times \SB(A')$, where $A$ and $A'$ are central simple $F'$-algebras of degree $m$ and $n$, respectively such that 
	$k[A]=k[A']=0$ and $[A]=[A']$ in the Brauer group $\Br(F')$. Since the subgroup in $\Br(F')$ generated by $A$ and $A'$
	is generated by some division algebra $B$ such that the index and the exponent of $B$ are all equal to $k$, we obtain $\CH^{2}(X)_{\torsion}=\CH^{2}(\SB(B))_{\torsion}$. By \cite[Cor.4]{Kar95} we have $\CH^{2}(\SB(B))_{\torsion}=0$, thus by the main theorem of \cite{MNZ} $Q(G)$ coincides with $\Sdec(G)$.\end{proof}

\begin{remark}\label{firstArem}
	The same proof works for an arbitrary product $G=(\prod_{i=1}^{m}\gSL_{n_{i}})/\gmu_{k}$ with a diagonal subgroup $\gmu_{k}$, i.e., $Q(G)=\Sdec(G)$. Indeed, using arguments in \cite{Bae} one can compute the quotient \[\Inv(G)=Q(G)/\Sdec(G)=\CH^{2}(X)_{\torsion}\] for an arbitrary semisimple group $G$ of type ${\rm A}$. For instance, observe that the same arguments in the proof work if we replace the diagonal subgroup $\gmu\simeq \gmu_{2}$ by a central subgroup $\gmu_{2}\times 1\subseteq \gmu_{m}\times \gmu_{n}$ or $1\times \gmu_{2}\subseteq \gmu_{m}\times \gmu_{n}$.
\end{remark}

The following proposition deals with groups of $p$-primary index for any prime $p$, which in turn computes the $p$-primary component of $\Inv((\gSL_{m}\times \gSL_{n})/\gmu_{k})$ for any diagonal (central) subgroup $\gmu_{k}$.

\begin{proposition}\label{lem:primaryindexA}
	Let $G=(\gSL_{m}\times \gSL_{n})/\gmu$, $m, n\geq 2$, where $\gmu\simeq\gmu_{k}$ is a $p$-primary diagonal $(central)$ subgroup. Then, 
	{\small \[Q(G)=\{dq+d'q'\mid  (\tfrac{k-1}{k})(md+nd')\equiv 0 \bmod 2k\}\]}
	and 
	{\small \[\Dec(G) \simeq
	\begin{cases}
	k\Z q \oplus k\Z q' & \text{ if } p\neq 2 \text{ or } p=2, \min\{v_{2}(m), v_{2}(n)\}>v_{2}(k) , \\
	k\Z (q-q')\oplus k\Z (q+q')  & \text{ if }p=2, v_{2}(m)=v_{2}(n)=v_{2}(k), \\
	k\Z q\oplus 2k\Z q' & \text{ if } p=2, v_{2}(m)>v_{2}(k)=v_{2}(n),
	\end{cases}\]}
	where $q$ $(resp. \,\, q')$ is the normalized Killing form of $\gSL_{m}$ $(resp.\,\, \gSL_{n})$.
\end{proposition}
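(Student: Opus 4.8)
The plan is to compute $Q(G)$ and $\Dec(G)$ separately using the explicit recipes of Section~5, and then to identify $\Dec(G)$ as a sublattice of $Q(G)$ by a case analysis on $v_2$. For the character group: by Section~\ref{par:char} with $H=\gSL_m$, $H'=\gSL_n$, we have $T^*_{sc}/T^*_{ad}=\Z/m\Z\oplus\Z/n\Z$ and $T^*/T^*_{ad}=\ker(\Z/m\Z\oplus\Z/n\Z\to\Z/k\Z)$, where the map is $(\bar a,\bar b)\mapsto \overline{a+b}$ (after identifying the generators of the cyclic factors with classes of fundamental weights $\omega_1,\omega'_1$ modulo root lattices). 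This describes $T^*$ as a sublattice of $\Lambda_w\oplus\Lambda_w'$ of index $k$. First I would fix a $\Z$-basis $\{x_i\}$ of $T^*$: take all simple roots together with one extra generator $\lambda_0=\tfrac{m}{k}\omega_1+\tfrac{n}{k}(\text{stuff})$ — more precisely, a primitive vector in $T^*$ projecting onto a generator of $T^*/T^*_{ad}\subseteq\Z/m\Z\oplus\Z/n\Z$. Since the Killing forms of $\gSL_m$ and $\gSL_n$ are the standard ones, $S^2(T^*_{sc})^W=\Z q\oplus\Z q'$ with $q(\omega_1)=\tfrac{m-1}{2m}\cdot 2 = \tfrac{m-1}{m}$ appropriately normalized (I will use the explicit value $q=\sum x_i^2-\sum x_ix_j$ type formula from \cite[\S4]{Mer}); evaluating a general form $dq+d'q'$ on $\lambda_0$ and on the root lattice, the constraint for $dq+d'q'\in S^2(T^*)$ reduces exactly to the divisibility of $q(\lambda_0)$-type terms, yielding $(\tfrac{k-1}{k})(md+nd')\equiv 0\bmod 2k$. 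This is the displayed formula for $Q(G)$.

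For $\Dec(G)=c_2(\Z[T^*]^W)$: by Section~5 this group is generated by the forms $c_2(\rho(\lambda))=-\tfrac12\sum_{\chi\in W(\lambda)}\chi^2$ for $\lambda\in T^*$. Since $W=W_H\times W_{H'}$, each orbit $W(\lambda)$ with $\lambda=(\mu,\mu')\in\Lambda_w\oplus\Lambda_w'$ splits as a product, so $c_2(\rho(\lambda))$ is a $\Z$-combination of $q$ and $q'$ coming from $c_2$ of $\gSL_m$-orbits and $\gSL_n$-orbits. The key point is that we must run $\lambda$ only over $T^*$, not over all of $T^*_{sc}$. I would compute $c_2(\rho(\omega_i))$ for the simple factors (these give $\Dec(\gSL_m)=k_0\Z q$-type generators, where in fact $\Dec(\gSL_m)=m\Z q$ if $m$ odd and similar small index otherwise — but the precise generators come from \cite{Mer}) and then determine which $\Z q\oplus\Z q'$-combinations are realized by orbits of elements actually lying in $T^*$. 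Concretely, an element of $T^*$ has the form (root lattice part) plus a multiple of $\lambda_0$; its orbit sum of squares contributes a cross term tied to $k$. This is where the three cases appear: the lattice generated inside $\Z q\oplus\Z q'$ by $\{c_2(\rho(\lambda)):\lambda\in T^*\}$ depends on how $2$-adic valuations of $m$, $n$, $k$ interact, because the relevant generator $q(\lambda_0)$ or $q'(\lambda_0)$ picks up a factor of $2$ exactly when $v_2(m)$ or $v_2(n)$ equals $v_2(k)$.

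I expect the main obstacle to be the case $v_2(m)=v_2(n)=v_2(k)$, where $\Dec(G)=k\Z(q-q')\oplus k\Z(q+q')$: here the diagonal combination $q-q'$ becomes available at level $k$ (not $2k$) because $\lambda_0$ can be chosen with equal fractional parts $\tfrac{m}{k}=\tfrac{n}{k}$ times odd units along $\omega_1$ and $\omega'_1$, so the cross-terms in the orbit-square-sum cancel in the antidiagonal direction; meanwhile $q+q'$ only reaches level $k$ as well. Carefully I would: (i) parametrize $\lambda\in T^*$ as $\lambda = r + t\lambda_0$ with $r$ in the root lattice $\Lambda_r\oplus\Lambda_r'$ and $t\in\Z$; (ii) note $c_2(\rho(r+t\lambda_0))$ modulo $c_2$ of root-lattice orbits (which land in $\Dec$ of the simple factors, i.e. in $k\Z q\oplus k\Z q'$ up to known index corrections) reduces to computing $c_2(\rho(t\lambda_0))$; (iii) expand $q(\lambda_0)$ and $q'(\lambda_0)$ explicitly using the chosen coordinates and read off the $2$-adic obstruction. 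For $p\ne 2$ everything is $2$-adically trivial and $\Dec(G)=k\Z q\oplus k\Z q'$ drops out immediately; the asymmetric case $v_2(m)>v_2(k)=v_2(n)$ is intermediate — $q'$ picks up the factor $2$ but $q$ does not. Finally I would double-check in each case that the claimed $\Dec(G)$ is indeed contained in the computed $Q(G)$, which is a short congruence verification, and that the index $[Q(G):\Dec(G)]$ matches the order of $\Sinv(G)$ computed elsewhere as a consistency check.
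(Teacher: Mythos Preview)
Your computation of $Q(G)$ is along the right lines and matches the paper's method: choose an explicit $\Z$-basis of $T^*$, rewrite $dq+d'q'$ in that basis, and read off the single non-integrality obstruction. The paper's basis is built from the $\omega_i,\omega_j'$ shifted by multiples of $\omega_{n-1}'$, and the obstructing coefficient sits at $x_{n-1}'^2$; your $\lambda_0$-description is vaguer but the mechanism is the same.

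The $\Dec(G)$ part, however, has a real gap. Your step~(ii), reducing $c_2(\rho(r+t\lambda_0))$ modulo root-lattice orbits to $c_2(\rho(t\lambda_0))$, does not work: the map $\lambda\mapsto c_2(\rho(\lambda))$ is highly nonlinear in $\lambda$ (the orbit of $r+t\lambda_0$ bears no simple relation to the orbits of $r$ and $t\lambda_0$ separately), so there is no translation argument available. More seriously, you never establish the \emph{upper bound} $\Dec(G)\subseteq k\Z q\oplus k\Z q'$. This is the crux of the paper's proof and is not obtained by inspecting a few orbits: one must show that for \emph{every} $\chi\in T^*$ the coefficients of $c_2(\rho(\chi))$ at $q$ and $q'$ are divisible by $k$. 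The paper does this via the explicit multinomial formula
\[
c_2(\rho(\chi))=\tfrac{n!}{s_1!\cdots s_{l'}!}\,[r,b]\,q\;+\;\tfrac{m!}{r_1!\cdots r_l!}\,[s,b']\,q',
\]
and then invokes the $p$-adic estimates of \cite[Lemmas~11.3, 11.4]{GMS} on multinomial coefficients, combined with the congruence $\sum r_ib_i+\sum s_jb_j'\equiv 0\bmod k$ coming from $\chi\in T^*$. Without this machinery your argument produces only a lower bound on $\Dec(G)$ (and even that is left imprecise: you do not name the specific characters, such as $k\omega_1$, $\omega_k$, or $\omega_{k/2}+\omega'_{k/2}$, whose orbits realize the claimed generators in each case).
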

\begin{proof}
	Let $G=(\gSL_{m}\times \gSL_{n})/\gmu_k$,  $m, n, k\geq 2$, $k\,|\,\gcd(m,n)$, where $\gmu_{k}$ is a diagonal subgroup. Then, by~\ref{par:char} the character group of the split maximal torus $T$ of $G$ is given by
	{\small \begin{equation}\label{TcharA}
	T^{*}=\{\sum_{i=1}^{m-1}a_{i}\omega_{i}+\sum_{j=1}^{n-1}a_{j}'\omega_{j}' \mid \sum_{r=0}^{\tfrac{m-k}{k}}\sum_{i=1}^{k-1} ia_{i+rk}+ \sum_{s=0}^{\tfrac{n-k}{k}}\sum_{i=1}^{k-1} ia'_{i+sk}\equiv 0 \bmod k \}.
	\end{equation}}
	
	Following \ref{par:invar} the group of $W$-invariant quadratic forms $S^{2}(T^*_{sc})^{W}$ is generated by the normalized Killing forms 
	{\small \[
	q=\sum_{i=1}^{m-1}\omega_{i}^{2}-\sum_{i=1}^{m-2}\omega_{i}\omega_{i+1}\text{ and } 
	q'=\sum_{j=1}^{n-1}\omega_{j}'^{2}-\sum_{j=1}^{n-2}\omega_{j}'\omega_{j+1}'.
	\]}

Consider the $\Z$-basis $\{x_{1},\ldots, x_{m-1},x_{1}',\ldots, x_{n-1}'\}$ of the character group $T^{*}$ given by
{\small \[
x_{i+rk}=\omega_{i+rk}+i\omega_{n-1}',\, x_{k+rk}=\omega_{k+rk}\text{ and } 
x_{i+sk}'=\omega_{i+sk}'+i\omega_{n-1}',\, x_{k+sk}'=\omega_{k+sk}',
\]} 
where $1\leq i\leq k-1$, $0\leq r\leq \tfrac{m-k}{k}$, $0\leq s\leq \tfrac{n-k}{k}$ (for convenience, we set $x_{j}=x_{j}'=0$ for any $j>n-1$ or $j>m-1$). 
	In this basis a form $\phi \in S^{2}(T^*_{sc})^{W}$ can be written as
	\[
	\phi=(\tfrac{md(k-1)+nd'(k-1)}{2k^{2}})x_{n-1}'^{2}+\psi,\quad d,d'\in \Z
	\]
	where $\psi$ is a quadratic form with integer coefficients. Hence,  we obtain
	\begin{equation}\label{QGA}
		Q(G)=\{dq+d'q'\mid  (\tfrac{k-1}{k})(md+nd')\equiv 0 \bmod 2k\}.
	\end{equation}
	
	From now on we assume that $k$ is $p$-primary. We claim that $\Dec(G)\subseteq k\Z q\oplus k\Z q'$. To show this we extend the arguments in \cite[p.136]{GMS}. We use the standard presentation of the root system of type ${\rm A }$, namely, that $\Lambda_w$ (resp. $\Lambda_w'$) consists of vectors in the standard basis $\{e_1,..,e_{m}\}$ (resp. $\{e_1',..,e_{n}'\}$) whose sum of coordinates is zero.

	Choose a character $\chi\in T^{*}$. Assume $\chi$ has $l$ (resp. $l'$) distinct coordinates in some order $b_{1}>\cdots>b_{l}$ (resp. $b_{1}'>\ldots>b_{l'}'$) which repeat $r_1,..,r_l$ (resp. $s_1,..,s_{l'}$) times with respect to the basis $\{e_{i}\}$ (resp. $\{e_j'\}$). Then, for the orbit $\rho(\chi)$ of $\chi$ under the action of $W$ we obtain
	\begin{equation}\label{ctworhochi}
	c_{2}(\rho(\chi))=\tfrac{n!}{s_{1}!\cdots s_{l'}!}\cdot [r,b]\cdot q+\tfrac{m!}{r_{1}!\cdots r_{l}!}\cdot [s,b']\cdot q', \text{ where}
	\end{equation}
	{\small \[
	[r,b]=\tfrac{(m-2)!}{r_{1}!\cdots r_{l}!} \big(m(\sum_{i=1}^l r_{i}b_{i}^{2})-(\sum_{i=1}^l r_{i}b_{i})^{2}\big), [s,b']=\tfrac{(n-2)!}{s_{1}!\cdots s_{l}!} \big(n(\sum_{j=1}^{l'} s_{j}b_{j}'^{2})-(\sum_{j=1}^{l'} s_{j}b_{j}')^{2}\big).
	\]}
	Observe that by (\ref{TcharA}) we have $\sum r_{i}b_{i}+\sum s_{j}b_{j}'\equiv 0 \bmod k$. Let $c=\min\{v_{p}(r_{i})\}$ and $d=\min\{v_{p}(s_{j})\}$. If $v_{p}(k)\leq d$, then $v_{p}(k)\leq v_{p}(\sum r_{i}b_{i})$. So we obtain $v_{p}(k)\leq v_{p}(\gcd(m, \sum r_{i}b_{i}))$. Hence, by \cite[p.137, Lemma 11.4]{GMS} the coefficient $[r, b]$ is divisible by $k$. Similarly, if $v_{2}(k)\leq c$, then $[s, b']$ is divisible by $k$. Hence, we may assume that $c, d\leq v_{p}(k)$. By \cite[p.137, Lemma 11.3]{GMS}, we have
	\[
	v_{p}\big(\tfrac{n!}{s_{1}!\cdots s_{l'}!}\big)\geq v_{p}(k)-d \geq 0
	\text{ and } v_{p}\big(\tfrac{m!}{r_{1}!\cdots r_{l}!}\big)\geq v_{p}(k)-c\geq 0,
	\]
	which implies that 
	\[v_{p}(\gcd(m, \sum r_{i}b_{i}))\geq d \text{ and } v_{p}(\gcd(n, \sum s_{j}b_{j}'))\geq c.\]
	Therefore, again by \cite[p.137, Lemma 11.4]{GMS} we see that each coefficient of $q$ and $q'$ in (\ref{ctworhochi}) is divisible by $k$, which proves the claim.

	Finally, we compute the group $\Dec(G)$ case by case. As 
	\begin{equation}\label{DA}
	c_{2}(\rho(k\omega_{1}))=-k^{2}q,\, c_{2}(\rho(2\omega_{1}-\omega_{2}))=-2mq
	\end{equation}
	and similarly, $c_{2}(\rho(k\omega_{1}'))=-k^{2}q'$, $c_{2}(\rho(2\omega_{1}'-\omega_{2}'))=-2mq'$, we have 
	\begin{equation}\label{DA2}
	\gcd(2,p)kq\in \Dec(G) \text{ if }v_{p}(m)=v_{p}(k)
	\end{equation}
	and similarly, $\gcd(2,p)kq'\in \Dec(G)$ if $v_{p}(n)=v_{p}(k)$. Moreover, we get
	\begin{equation}\label{DA3}
	c_{2}(\rho(\omega_{k}))=(kk')q \text{ with }\gcd(k',p)=1 \text{ if } v_{p}(m)>v_{p}(k)
	\end{equation}
	and similarly, $c_{2}(\rho(\omega_{k}))=(k k'')q$ with $\gcd(k'',p)=1$ if $v_{p}(n)>v_{p}(k)$ (see also \cite[Thm.4.1]{BR}). Thus if $p\neq 2$, then by (\ref{DA}) and (\ref{DA3}) we obtain 
	\[\gcd(k^{2}, kk')q=kq\in \Dec(G),\quad \gcd(k^{2}, kk'')q=kq'\in \Dec(G).\] 
	Therefore, by (\ref{DA2}) and the claim above, $\Dec(G)=k\Z q\oplus k\Z q'$ if $p\neq 2$.

	Now assume that $p=2$. If $v_{2}(m)>v_{2}(k)$ and $v_{2}(n)>v_{2}(k)$, then by (\ref{DA}), (\ref{DA3}) and the claim above, we have $\Dec(G)=k\Z q\oplus k\Z q'$. 
	
	If $v_{2}(m)=v_{2}(n)=v_{2}(k)$, then $c_{2}(\rho(\omega_{k/2}+\omega'_{k/2}))\equiv -kq-kq' \bmod 2k$. Hence, by (\ref{DA2}), $k(q-q')$, $k(q+q')\in \Dec(G)$. Since $\Dec(\gSL_{m}/\gmu_{k})=2kq$ if $v_{2}(m)=v_{2}(k)$, and $\Dec(\gSL_{n}/\gmu_{k})=2kq'$ if $v_{2}(n)=v_{2}(k)$ (\cite[Thm.4.1]{BR}), it follows from the claim above that $\Dec(G)=k\Z (q-q')\oplus k\Z (q+q')$. 
	
	Similarly, if $v_{2}(m)>v_{2}(k)=v_{2}(n)$, then by (\ref{DA}) and (\ref{DA3}) we have $kq\in \Dec(G)$. Since $\Dec(\gSL_{n}/\gmu_{k})=2kq'$, we get $kq', k(q\pm q')\not\in \Dec(G)$. Therefore, by the claim we obtain $\Dec(G)=k\Z q\oplus 2k\Z q'$.\end{proof}

\begin{remark}
This proposition generalizes \cite[Theorem 4.4]{Mer} and \cite[Theorem 4.1]{BR} for split simple groups of type $A$. 

For instance, in order to obtain $Q(\gPGL_{m})$ we simply set $d'=0$ and $k=m$ in (\ref{QGA}), then we get $Q(\gPGL_{m})=2\gcd(2,m)q$ \cite[Theorem 4.4]{Mer}. Similarly, we can obtain $\Dec(\gPGL_{m})$ in the same way. 

In order to compute the indecomposable groups for $G=\gSL_{2m}/\gmu_{2}$, we set $d'=0$, $k=2$. Then it follows by Proposition \ref{lem:primaryindexA} that $Q(G)=\{dq\,|\, md\equiv 0 \bmod 4\}$ and
{\small \[\Dec(G)=\begin{cases}
2q & \text{ if }v_{2}(m)>0,\\
4q & \text{ if }v_{2}(m)=0.
\end{cases} 
\]}
Hence, we obtain \cite[Theorem 4.1]{BR}, that is	
{\small \[
\Inv(G)\simeq 
\begin{cases}
(\Z/2\Z) q & \text{ if }v_{2}(m)\geq 2, \\
0 & \text{ otherwise.}
\end{cases}
\]}
Finally observe that together with Lemma~\ref{firstlemmaA}, properties~\eqref{eq:product} and \eqref{eq:inclus} it computes the group $\Sinv(G')$, where 
		$G'=(\gSL_{2m}/\gmu_2)\times \gSL_{2n}$, $n,m\ge 1$.
\end{remark}

The following corollary generalizes \cite[Example 3.1]{MNZ} 
to groups of type ${\rm A}$ (see also \cite{Bae}). Similarly, by using Lemma \ref{firstlemmaA} and Proposition \ref{lem:primaryindexA}, one can compute both groups $\Inv(G)$ and $\Sinv(G)$ for any $p$-primary diagonal subgroup~$\gmu_{k}$. 

\begin{corollary}\label{cor:typeA}
	Let $G=(\prod_{i=1}^{m}\gSL_{2n_{i}})/\gmu_2$, $ n_{i}\geq 1$, where $\gmu_{2}$ is the diagonal subgroup. Then
	{\small\[
	\Sinv(G)=\Inv(G)=\begin{cases}(\Z/2\Z)^{\oplus m} & \text{ if }\, \forall n_{i}\equiv 0\bmod 4,\\
	(\Z/2\Z)^{\oplus m-1} & \text{ otherwise.} \end{cases}
	\]}
\end{corollary}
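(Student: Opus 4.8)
The plan is to reduce the statement about the product $G=(\prod_{i=1}^{m}\gSL_{2n_i})/\gmu_2$ to the pairwise case handled by Proposition~\ref{lem:primaryindexA} together with Lemma~\ref{firstlemmaA}. First I would invoke Lemma~\ref{firstlemmaA} (and Remark~\ref{firstArem}, which extends it to arbitrary products $\bigl(\prod\gSL_{n_i}\bigr)/\gmu_k$) to conclude $Q(G)=\Sdec(G)$, so that $\Sinv(G)=Q(G)/\Dec(G)=\Inv(G)$; hence only one group needs to be computed. The diagonal $\gmu_2$ here is $2$-primary, so Proposition~\ref{lem:primaryindexA} and the product formulas~\eqref{eq:product} are exactly the right tools, but they are stated for a product of \emph{two} factors, so the first genuine task is to record how they generalize to $m$ factors.

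Next I would set up the arithmetic. Write $Q(G)=\{\sum_i d_i q_i\mid \tfrac12\sum_i (2n_i)d_i\equiv 0\bmod 4\}=\{\sum d_i q_i \mid \sum n_i d_i\equiv 0\bmod 4\}$, obtained by repeating the basis-change computation of Proposition~\ref{lem:primaryindexA} with all $m$ weight lattices simultaneously (the cross terms $\omega\cdot\omega'$ between different factors do not occur in a Killing form, so the only surviving obstruction is the single congruence on the coefficient of $x_{n_m-1}'^2$, or rather its $m$-fold analogue). For $\Dec(G)$ I would use the generators $c_2(\rho(2\omega_1^{(i)}-\omega_2^{(i)}))=-2\cdot 2n_i\, q_i=-4n_i q_i$ and $c_2(\rho(2\omega_1^{(i)}))=-4q_i$ from~\eqref{DA}, together with the ``mixed'' class $c_2(\rho(\omega_1^{(i)}+\omega_1^{(j)}))\equiv -2q_i-2q_j\bmod 4$ which links the factors; by the divisibility claim in the proof of Proposition~\ref{lem:primaryindexA} (all coefficients of $c_2(\rho(\chi))$ are divisible by $k=2$) one has $\Dec(G)\subseteq\bigoplus_i 2\Z q_i$, and then $\Dec(G)$ is the subgroup of $\bigoplus 2\Z q_i$ generated by $\{4q_i\}$, $\{4n_iq_i\}$ and $\{2q_i+2q_j\}$, i.e.\ by $2(q_i+q_j)$ for all $i\ne j$ together with $4q_i$ for those $i$ with $v_2(n_i)=0$ (and only $4q_i$, not $2q_i$, is available when $n_i$ is odd).

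Then I would compute the quotient $Q(G)/\Dec(G)$ explicitly. Reducing modulo $\bigoplus 2\Z q_i$, the class of $\sum d_i q_i\in Q(G)$ depends only on the parities $\epsilon_i=d_i\bmod 2$, subject to $\sum n_i\epsilon_i\equiv 0\bmod 2$ coming (after reduction) from the defining congruence of $Q(G)$; and in the quotient the relations $2(q_i+q_j)=0$ identify all the $q_i$ appearing with odd coefficient in pairs. A short case analysis over $\F_2$ then gives the answer: if every $n_i\equiv 0\bmod 4$ the congruence on $\Dec$ never forces an extra $4q_i=0$ relation beyond what $Q(G)$ already permits, and one gets $(\Z/2)^{\oplus m}$; if some $n_i$ is $\not\equiv 0\bmod 4$ (either odd, giving only $4q_i\in\Dec$ and an extra constraint, or $v_2(n_i)=1$), the corresponding coordinate gets pinned down, dropping the rank by one, so the quotient is $(\Z/2)^{\oplus m-1}$. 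I expect the main obstacle to be the bookkeeping in this last step: one must be careful to distinguish the three sub-cases of Proposition~\ref{lem:primaryindexA} (for $k=2$ these are ``$n_i$ even'' versus ``$n_i$ odd'') across all $m$ factors at once, keep track of which generators of $\Dec(G)$ are genuinely new versus redundant, and verify that the single mod-$4$ congruence defining $Q(G)$ interacts with the $2(q_i+q_j)$ relations to produce exactly one relation among the parity vectors rather than two. Everything else is a routine assembly of~\eqref{eq:product}, \eqref{eq:inclus}, Lemma~\ref{firstlemmaA}, and Proposition~\ref{lem:primaryindexA}.
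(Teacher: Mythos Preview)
Your overall strategy is the same as the paper's: use Remark~\ref{firstArem} to get $Q(G)=\Sdec(G)$, then compute $Q(G)/\Dec(G)$ by extending the two-factor formulas of Proposition~\ref{lem:primaryindexA} to $m$ factors. The paper carries out the two-factor case in detail and then simply says ``applying the same arguments for three and more groups completes the proof''; you are writing out what those arguments should be.

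There is, however, a genuine gap in your list of generators for $\Dec(G)$. You take only the elements from~\eqref{DA} (giving $4q_i$ and $4n_iq_i$) and the mixed class $c_2(\rho(\omega_1^{(i)}+\omega_1^{(j)}))$ (giving $2(q_i+q_j)$). These generate only the even-coordinate-sum sublattice of $\bigoplus_i 2\Z q_i$, regardless of the parities of the $n_i$. You have omitted the generator coming from~\eqref{DA3}: when $n_i$ is even one has $v_2(2n_i)>v_2(k)=1$, so $\omega_2^{(i)}\in T^*$ and $c_2(\rho(\omega_2^{(i)}))=2k'q_i$ with $k'$ odd, which together with $4q_i$ forces $2q_i\in\Dec(G)$. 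Without this element your $\Dec(G)$ is strictly too small whenever at least one $n_i$ is even. For instance, in the case you single out (all $n_i\equiv 0\bmod 4$), the correct $\Dec(G)$ is $\bigoplus_i 2\Z q_i$ and $Q(G)=\bigoplus_i\Z q_i$, giving the quotient $(\Z/2\Z)^{\oplus m}$; with your generators alone the quotient would have order $2^{m+1}$. Your parenthetical ``only $4q_i$, not $2q_i$, is available when $n_i$ is odd'' suggests you sense the dichotomy, but you never produce the $2q_i$ on the even side.

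Once you add the missing~\eqref{DA3} generator, the case analysis becomes exactly the one in Proposition~\ref{lem:primaryindexA}, run factor by factor, and the bookkeeping you describe goes through.
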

\begin{proof}
Let $G=(\gSL_{2m}\times \gSL_{2n})/\gmu_2$.	Then, it follows from Proposition \ref{lem:primaryindexA} that 
	{\small \[
		\Inv(G) \simeq
		\begin{cases}
		(\Z/2\Z) q \oplus (\Z/2\Z) q' & \text{ if }m\equiv n\equiv 0\bmod 4, \\
		(\Z/2\Z) q & \text{ if }m\equiv 0,  n\equiv 2 \bmod 4 \text{ or }m\equiv 0 \bmod 4, n \text{ is odd}, \\
		(\Z/2\Z) (q-q') & \text{ if }m\equiv n\equiv 2 \bmod 4\text{ or both }m, n \text{ are odd},\\
		(\Z/2\Z)(q-2q') & \text{ if } m\equiv 2 \bmod 4, n \text{ is odd}.
		\end{cases}
		\]}
Hence, the result follows by Lemma \ref{firstlemmaA}. Applying the same arguments for three and more groups completes the proof.
\end{proof}

In the following we show that the both indecomposable group $\Inv(G)$ and the semi-decomposable group $\Sinv(G)$ can have an arbitrary order (c.f. \cite{Bae}). In particular, the order can be arbitrarily large.

\begin{corollary}\label{cor:abelian} For an arbitrary integer $k\ge 2$ there exists a semisimple group $G$ of type $\rm A$ such that {\small \[
|\Inv(G)|=|\Sinv(G)|=k.\]} 
Moreover, for any homocyclic $p$-group $C$ there exists a semisimple group $H$ of type ${\rm A}$ such that $\Inv(H)=\Sinv(H)=C$.
\end{corollary}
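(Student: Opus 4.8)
The plan is to build the desired groups by taking products of the two-factor groups $G=(\gSL_{2m}\times\gSL_{2n})/\gmu_2$ whose invariant groups were computed in Proposition~\ref{lem:primaryindexA} (via Corollary~\ref{cor:typeA}), using the multiplicativity \eqref{eq:product} of $Q$ and $\Dec$ together with Lemma~\ref{firstlemmaA}, which guarantees $\Sinv=\Inv$ for every group of type ${\rm A}$ under consideration, so that it suffices to realize the prescribed abelian group as $\Inv(G)$ for a suitable semisimple $G$ of type ${\rm A}$.

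First I would treat the case of a homocyclic $p$-group $C=(\Z/p^a\Z)^{\oplus t}$. The key input is the $p$-primary part of Proposition~\ref{lem:primaryindexA}: for $G=(\gSL_{2m}\times\gSL_{2n})/\gmu_2$ with $v_2(2m)>1=v_2(2n)$, i.e. the third case, one gets $Q(G)=\{dq+d'q'\}$ cut out by a congruence mod $4$ and $\Dec(G)=2\Z q\oplus 4\Z q'$, yielding a cyclic quotient. More generally I expect that by choosing $G=\gSL_{2m}/\gmu_2$ with $v_2(m)\ge 2$ one obtains $\Inv\cong\Z/2\Z$, and that for odd primes $p$, taking $G=\gSL_{m}/\gmu_{p^a}$ with $p^a\mid m$ and analyzing $Q(G)$ and $\Dec(G)$ via \eqref{QGA} and the computations \eqref{DA}, \eqref{DA3} gives a cyclic group $\Z/p^a\Z$ (the obstruction $[r,b]$ divisibility analysis already present in the proof of Proposition~\ref{lem:primaryindexA} controls $\Dec$, while $Q(G)/\Dec(G)$ is governed by the discrepancy between the congruence modulus $2p^a$ and the lattice index). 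Then $H=\prod_{j=1}^t G_j$ with $t$ copies of such a $G$ gives $\Inv(H)=\Sinv(H)=C$ by \eqref{eq:product} and \eqref{eq:inclus}, since $\Inv$ and $\Sinv$ are each the quotient of a product by a product.

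For the first assertion, that $|\Inv(G)|=|\Sinv(G)|=k$ for arbitrary $k\ge 2$, I would factor $k=\prod p_i^{a_i}$ into prime powers and take the product of the groups realizing each cyclic factor $\Z/p_i^{a_i}\Z$ as above; multiplicativity of $Q$, $\Dec$, hence of $\Inv$ and (using Lemma~\ref{firstlemmaA}) of $\Sinv$, gives a group whose invariant groups have order exactly $k$. The main obstacle I anticipate is not the product formalism but the explicit single-prime computation: one must exhibit, for each prime power $p^a$, an $\gSL$-quotient whose $Q/\Dec$ is precisely cyclic of that order — this requires choosing the degrees $m$ (and the subgroup order $p^a$) carefully so that the congruence defining $Q(G)$ in \eqref{QGA} and the generators of $\Dec(G)$ produced by \eqref{DA}, \eqref{DA3} combine to give exponent and order exactly $p^a$ rather than something smaller or non-cyclic; verifying that the relevant $\gcd$'s and valuations line up is the delicate bookkeeping step, and it is essentially a refinement of the case analysis already carried out in the proof of Proposition~\ref{lem:primaryindexA}.
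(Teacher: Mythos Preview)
Your proposal has a genuine gap. The building block you choose for odd primes, a \emph{simple} group $\gSL_m/\gmu_{p^a}$, has $\Sinv=0$: the paper records right before~\eqref{eq:inclus} that $\Dec(H)=\Sdec(H)$ for every simple $H$. So even if $\Inv(\gSL_m/\gmu_{p^a})\cong\Z/p^a\Z$ for a well-chosen $m$, that factor contributes nothing to $\Sinv$, and no amount of taking direct products can manufacture the missing odd-primary part of $\Sinv$ from factors whose $\Sinv$ vanishes. Relatedly, your appeal to Lemma~\ref{firstlemmaA} to conclude $\Sinv=\Inv$ for the final group is not justified: that lemma (and Remark~\ref{firstArem}) applies only to a quotient $(\prod_i\gSL_{n_i})/\gmu_k$ by a \emph{single} diagonal cyclic subgroup, not to a direct product of several such quotients, and there is no multiplicativity statement for $\Sdec$ analogous to~\eqref{eq:product}.

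The paper avoids both problems by never taking external products. For arbitrary $k$ it uses the single two-factor group $G=(\gSL_{\gcd(2,k)k}\times\gSL_{\gcd(2,k)k})/\gmu_k$, to which Lemma~\ref{firstlemmaA} applies directly so that $\Sinv(G)=\Inv(G)$; the order is then read off prime by prime via Proposition~\ref{lem:primaryindexA} applied to the auxiliary groups $G[p^r]=(\gSL_{\gcd(2,p)p^r}\times\gSL_{\gcd(2,p)p^r})/\gmu_{p^r}$, which give the $p$-primary components of $\Inv(G)$. For a homocyclic $p$-group $(\Z/p^r\Z)^{\oplus m}$ it uses the single $m$-fold diagonal quotient $H=(\gSL_n)^m/\gmu_{p^r}$ with $n=\gcd(2,p)p^{2r}$, which is exactly the shape covered by Remark~\ref{firstArem}, and then computes $Q(H)$ and $\Dec(H)$ directly by the method of Proposition~\ref{lem:primaryindexA}. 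In short, the essential idea you are missing is that one must stay within the class of single diagonal quotients so that $Q=\Sdec$ is guaranteed, and realize the whole target group inside one such quotient rather than assembling it from pieces.
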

\begin{proof}
Let $p^{r}$ ($r\geq 1$) be a prime factor of $k$ and let $n=\gcd(2,p)p^{r}$. We denote $G[p^{r}]=(\gSL_{n}\times \gSL_{n})/\gmu_{p^{r}}$, where $\gmu_{p^{r}}$ is the diagonal subgroup. By Proposition~\ref{lem:primaryindexA} we have
\[\Inv(G[p^{r}])=\Sinv(G[p^{r}])=\Z/p^{r}\Z.\]
Set $G=(\gSL_{\gcd(2,k)k}\times \gSL_{\gcd(2,k)k})/\gmu_{k}$. Then, the same argument as in \cite[\S 3b]{MNZ} shows that $\Inv(G[p^{r}])$ is a $p$-primary component of $\Inv(G)$ and the first statement follows from Lemma~\ref{firstlemmaA}.

Let $C=(\Z/p^{r}\Z)^{\oplus m}$ be a homocyclic $p$-group of rank $m$ for some prime $p$. It suffices to consider the case $m\geq 2$. Let $H=(\gSL_{n})^{m}/\gmu_{p^{r}}$, where $n=\gcd(2,p)p^{2r}$ and $\gmu_{p^{r}}$ is the diagonal subgroup. Then, the arguments used in the proof of Proposition~\ref{lem:primaryindexA} yield
{\small \[Q(G)=\{\sum_{i=1}^{m}d_{i}q_{i}\mid (n/p^{r})\sum_{i=1}^{m}d_{i}\equiv 0 \bmod \gcd(2,p)p^{r}     \},      \]}
where $q_{i}$ is the corresponding normalized Killing form of $\gSL_{n}$. Similarly, we have $\Dec(G)=\bigoplus_{i=1}^{m}p^{r}\Z q_{i}$. Then the second statement follows by Remark~\ref{firstArem}.
\end{proof}

\section{Type ${\rm B}$}

In the present section we show that any semi-decomposable invariant of semisimple groups of type $B$ is decomposable, except in the case of a product of groups of type $B_{2}=C_{2}$ modulo the diagonal subgroup $\gmu_{2}$. We first consider the index $2$ case.
 
\begin{proposition}\label{propB}
	Let  $G=(\gSpin_{2m+1}\times \gSpin_{2n+1})/\gmu_2$, $m, n\geq 2$, where $\gmu_{2}$ is the diagonal subgroup. Then, we have $\Inv(G)=\Z/2\Z$ and 
	{\small \[\Sinv(G)=\begin{cases}\Z/2\Z & \text{ if } m=n=2,\\
	0 & \text{ otherwise,} \end{cases}\]}
	i.e., each semi-decomposable invariant is decomposable unless $m=n=2$.
\end{proposition}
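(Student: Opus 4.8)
The plan is to compute the three groups $Q(G)$, $\Dec(G)$, and $\Sdec(G)$ separately and then assemble the statement, exactly paralleling the type-${\rm A}$ analysis of Proposition~\ref{lem:primaryindexA}. First I would use the machinery of \S\ref{par:char}--\S\ref{par:invar} to pin down $T^*$ and $Q(G)$: here $H=\gSpin_{2m+1}$ and $H'=\gSpin_{2n+1}$, each with center $\mu_2$, so $T^*_{sc}/T^*_{ad}=\Z/2\Z\oplus\Z/2\Z$ and the diagonal $\mu_2$ picks out the kernel of the sum map, giving an explicit description of $T^*\subset T^*_{sc}=\Lambda_w\oplus\Lambda_w'$. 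One then writes the $W$-invariant quadratic forms as $\phi=dq+d'q'$ with $q,q'$ the normalized Killing forms of type ${\rm B}$ (listed in \cite[\S4]{Mer}), expresses $q,q'$ in a convenient $\Z$-basis of $T^*$, and reads off the divisibility constraint on $(d,d')$ defining $Q(G)$; I expect this to come out as a single congruence mod $4$ on a linear combination of $d$ and $d'$, with the answer depending on the parities of $m$ and $n$ (in particular the special rôle of $m=n=2$ should already be visible, since $B_2=C_2$).

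Next I would compute $\Dec(G)=c_2(\Z[T^*]^W)$ by producing explicit orbit sums. Using $c_2(\rho(\lambda))=-\tfrac12\sum_{\chi\in W(\lambda)}\chi^2$ for suitable $\lambda\in T^*$ (e.g. the orbits of $\omega_1$, of $2\omega_1$, of the relevant spin weights, and of their analogues for $H'$), one gets enough generators of $\Dec(G)$ to determine it exactly; the combinatorial input is the same $p$-adic valuation bookkeeping via \cite[p.137, Lemmas 11.3, 11.4]{GMS} that was used in type ${\rm A}$. Comparing $\Dec(G)$ with $Q(G)$ then yields $\Inv(G)=Q(G)/\Dec(G)=\Z/2\Z$ in all cases.

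The real content is the computation of $\Sdec(G)$. By the main theorem of \cite{MNZ}, $\Sdec(G)=c_2(\Z[T^*]\cap I_{sc}^W)$, so I need to understand the ideal $I_{sc}^W\cap R[T^*]$ — and this is precisely where Theorem~\ref{thm:maingen} enters, once one knows (by the type-${\rm C}$ case of \S4, via $B_2=C_2$, together with an independent check for the type-${\rm B}$ components, or by quoting generalized flatness for type ${\rm B}$) that $(\rho_1,\ldots,\rho_n)$ satisfies the generalized flatness condition. Then the generators $h_{1,i},h_{2,i},h_{3,i}$ of Definition~\ref{dfn:genlist} give, after applying $c_2$, an explicit finite generating set for $\Sdec(G)$; the $h_{1,i}$-type elements (cf. the displays \eqref{eq:typeBgen} referenced in the Remark after Definition~\ref{dfn:genlist}) are the ones that contribute nontrivially beyond $\Dec(G)$. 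One computes these $c_2$-images and checks: when $\{m,n\}\neq\{2\}$ the extra generators already lie in $\Dec(G)$, so $\Sdec(G)=\Dec(G)$ and $\Sinv(G)=0$; when $m=n=2$ one of them survives, giving $\Sdec(G)/\Dec(G)=\Z/2\Z$. The main obstacle I anticipate is the bookkeeping in this last step — correctly identifying which $h_{k,i}$ survive modulo $\Dec(G)$ after applying $c_2$, and verifying the flatness hypothesis for the type-${\rm B}$ root system so that Theorem~\ref{thm:maingen} is available; the $m=n=2$ exception must be traced carefully since it is exactly the coincidence $B_2=C_2$ that makes an orbit small enough to produce a genuinely semi-decomposable, non-decomposable invariant.
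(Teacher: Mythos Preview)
Your plan for $Q(G)$ and $\Dec(G)$ is roughly on track, but your expectations are off: the constraint turns out to be simply $d+d'\equiv 0\bmod 2$, uniformly in $m,n$ (not $\bmod\,4$, and not depending on the parities of $m,n$), and $\Dec(G)=2\Z q\oplus 2\Z q'$ follows from the sign-switching subgroup $(\Z/2\Z)^m\times(\Z/2\Z)^n$ of $W$ as in \cite[Lemma~14.2]{GMS} together with $c_2(\rho(\omega_1))=2q$, $c_2(\rho(\omega_1'))=2q'$. So $\Inv(G)=\Z/2\Z$ with generator $q-q'$; the special role of $m=n=2$ is \emph{not} visible in $Q(G)$ or $\Dec(G)$.

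The real problem is your approach to $\Sdec(G)$. You propose to invoke Theorem~\ref{thm:maingen}, but that requires the generalized flatness condition for the type-${\rm B}$ root system, which the paper establishes only for types ${\rm A}$ and ${\rm C}$ (\S4). The $B_2=C_2$ coincidence covers the rank-$2$ factors, but for $m\ge 3$ or $n\ge 3$ you would need an independent proof of flatness for ${\rm B}_m$, which you flag as an obstacle---and it is a genuine one, not just bookkeeping. So as written your plan has a gap precisely in the cases where you want to conclude $\Sinv(G)=0$.

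The paper's argument bypasses Theorem~\ref{thm:maingen} entirely with a much more elementary observation. Write any $x\in\Z[T^*]\cap I_{sc}^W$ as $x=\sum_i(d_i+\delta_i)\bar\rho(\omega_i)+\sum_j(d_j'+\delta_j')\bar\rho(\omega_j')$ with $d_i,d_j'\in\Z$ and $\delta_i,\delta_j'\in I_{sc}$. Since $c_2(I_{sc}^3)=0$, one gets $c_2(x)=\sum_i d_i\,c_2(\bar\rho(\omega_i))+\sum_j d_j'\,c_2(\bar\rho(\omega_j'))$. Now the point is that for type ${\rm B}_m$ with $m\ge 3$ one has $c_2(\bar\rho(\omega_i))\in 2\Z q$ for \emph{every} $i$ (in particular $c_2(\bar\rho(\omega_m))=2^{m-2}q$ for the spin weight), so $c_2(x)\in 2\Z q\oplus 2\Z q'=\Dec(G)$ automatically and $q-q'\notin\Sdec(G)$. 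No knowledge of the generators of $I_{sc}^W\cap\Z[T^*]$ is needed. When $m=n=2$ the spin orbit is small enough that $c_2(\bar\rho(\omega_2))=q$ is odd, and the explicit element $y=e^{\omega_2}\bigl(\bar\rho(\omega_2)-\bar\rho(\omega_2')\bigr)\in\Z[T^*]\cap I_{sc}^W$ (this is the display \eqref{eq:typeBgen}) satisfies $c_2(y)=q-q'$, giving $\Sinv(G)=\Z/2\Z$. The mixed case $m=2$, $n\ge 3$ is handled by the same parity count: $c_2(x)\equiv (d_1+d_2)q\bmod 2$, so again $q-q'\notin\Sdec(G)$.
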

\begin{proof} Following \ref{par:char}
	the character group of the split maximal torus $T$ of $G$ is given by
	{\small \[
	T^{*}=\{\sum_{i=1}^{m}a_{i}\omega_{i}+\sum_{i=1}^{n}a_{i}'\omega_{i}' \,|\, a_{m}\equiv a'_{n} \bmod 2 \}.
	\]}
	Following \ref{par:invar} the group $S^{2}(T_{sc}^*)^{W}$ is generated by the normalized Killing forms
	{\small \[q=\sum_{i=1}^{m-1}(\omega_{i}^{2}-\omega_{i}\omega_{i+1})+2\omega_{m}^{2}-\omega_{m-1}\omega_{m},  q'=\sum_{i=1}^{n-1}(\omega_{i}'^{2}-\omega_{i}'\omega_{i+1}')+2\omega_{m}'^{2}-\omega_{m-1}'\omega_{m}'.\]}
	
	Choose a $\Z$-basis $\{\omega_{1},\ldots, \omega_{m-1}, \omega_{1}',\ldots, \omega_{m-1}, v_{1}, v_{1}'\}$ of $T^{*}$ where
	$v_{1}=\omega_{m}+\omega_{n}'$ and $v_{1}'=\omega_{m}-\omega_{n}'$.
	For any $\phi\in S^{2}(T^*)^{W}$ there exist $d, d'\in \Z$ such that $\phi=dq+d'q'$, thus in this basis we have  \[
	\phi=\tfrac{1}{2}(d+d')(v_{1}^{2}+v_{1}'^{2})+\psi\] for some quadratic form $\psi$ with integer coefficients. Hence, we obtain 
	\begin{equation}\label{QGB}
	Q(G)=\{dq+d'q'\,|\,  d+d'\equiv 0 \bmod 2\}=\Z(q-q')\oplus \Z(q+q').
	\end{equation}
	
	We claim that $\Dec(G)=2\Z q\oplus  2\Z q'$. The result for the group of indecomposable invariants then follows immediately.
	Indeed,
	since $q=\tfrac{1}{2}(\sum_{i=1}^{m} e_{i}^{2})$ and $q'=\tfrac{1}{2}(\sum_{j=1}^{n} e_{j}'^{2})$ in terms of the standard basis of $T_{sc}^*=\Z^{m}\oplus \Z^{n}$, we conclude that $c_{2}(\rho(\omega_1))=2q$ and $c_{2}(\rho(\omega_{1}'))=2q'$ are contained in $\Dec(G)$. On the other hand, as $\Dec(G)$ is generated by $c_{2}(\rho(\lambda))$ for all $\lambda\in T^{*}$ and the Weyl group of $G$ contains normal subgroups $(\Z/2\Z)^m$ and $(\Z/2\Z)^n$ generated by sign switching, we see that the coefficient at each $e_{i}$ in the expansion of $c_{2}(\rho(\lambda))$ is divisible by 2 (c.f. \cite[Lemma 14.2]{GMS}).

	We now compute the group $\Sdec(G)$.
	Assume that $m=n=2$. Consider an element 
	\begin{equation}\label{eq:typeBgen} y=e^{\omega_{2}}z\in \Z[T^{*}]\cap I_{sc}^{W}\text{ with }z=\bar\rho(\omega_{2})-\bar\rho(\omega_{2}'),\end{equation} where $\bar \rho(\omega_i)$ denotes the augmented orbit $\rho(\omega_i)- |W(\omega_i)|$. As $(e^{\omega_{2}}-1)z\in I_{sc}^3$, we see that $c_{2}(y)=c_{2}(z)$. Since $c_{2}(\bar\rho(\omega_{2}))=q$ and $c_{2}(\bar\rho(\omega_{2}'))=q'$, we conclude that $q-q'\in \Sdec(G)$. Therefore, $\Sinv(G)=\Z/2\Z$.

	Assume that $m, n\geq 3$. We will show that $q-q'$ which is a generator of $\Inv(G)$ does not belong to $\Sdec(G)$. Let $x\in \Z[T^*]\cap I_{sc}^W$. Similar to \cite[\S3c]{MNZ} write 
	{\small \begin{equation*}
		x=\sum_{i=1}^{m} (d_{i}+\delta_{i})\bar{\rho}(\omega_{i})+\sum_{j=1}^{n} (d_{j}'+\delta_{j}')\bar{\rho}(\omega_{j}')
	\end{equation*}}
	for some $d_{i}, d_{j}'\in \Z$ and $\delta_{i}, \delta_{j}'\in I_{sc}$. As $c_2(I_{sc}^3)=0$, we have
	{\small \[
	c_2(x)=\sum_{i=1}^{m} d_{i}c_2(\bar{\rho}(\omega_{i}))+\sum_{j=1}^{n} d_{j}'c_2(\bar{\rho}(\omega_{j}')).
	\]}
	On the other hand, we have
	\[c_{2}(\bar\rho(\omega_{i}))=2m_{i}q \text{ and } c_{2}(\bar\rho(\omega_{j}'))=2m_{j}'q'\]
	for all $1\leq i\leq m$ and $1\leq j\leq n$ and for some $m_{i}, m_{j}'\in \Z$. Hence, $c_{2}(x)\equiv 0 \bmod 2$, thus $q-q'\not\in \Sdec(G)$. Similarly, if $m=2$ and $n\geq 3$, then $c_{2}(x)\equiv (d_{1}+d_{2})q \bmod 2$, thus $q-q'\not\in \Sdec(G)$, which completes the proof.\end{proof}

The previous proposition yields the following. Combining these results we obtain both indecomposable and semi-decomposable subgroups for an arbitrary semisimple group of type ${\rm B}$.
\begin{corollary}\label{cor:typeB}
	(1) Let $G=\prod_{i=1}^{m}\gSO_{2n_i+1}$, $n_i\geq 2$. Then, {\small \[\Inv(G)=\Sinv(G)=0.\]}
		(2) Let $G=(\prod_{i=1}^{m}\gSpin_{2n_i+1})/\gmu_{2}$, $n_i\geq 2$, $m\geq 2$, where $\gmu_{2}$ is the diagonal subgroup. 
		Then, {\small \[\Inv(G)=(\Z/2\Z)^{\oplus m-1} \text{ and } \Sinv(G)=(\Z/2\Z)^{\oplus k-1}, \]} 
		where $k$ is the number of $n_{i}$'s such that $n_{i}=2$.
		
		(3) Let $G=(\prod_{i=1}^{m}\gSpin_{2n_i+1})\times (\prod_{i=1}^{m'}\gSO_{2n_{i}'+1})$, $n_{i}, n_{i}'\geq 2$. Then, {\small \[\Inv(G)=(\Z/2\Z)^{\oplus k} \text{ and } \Sinv(G)=0,  \]}
		where $k$ is the number of $n_{i}$'s such that $n_{i}\geq 3$.
\end{corollary}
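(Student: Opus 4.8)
The plan is to deduce Corollary~\ref{cor:typeB} from Proposition~\ref{propB} together with the product formulas \eqref{eq:product} and the inclusions \eqref{eq:inclus}, handling the three parts separately but with a common strategy: reduce each product to its simple and index-$2$ ``blocks'', compute $Q$, $\Dec$ and $\Sdec$ on each block, and reassemble.

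For part (1), $G$ is a product of adjoint simple groups $\gSO_{2n_i+1}$, so by \eqref{eq:product} it suffices to recall the simple case $\Inv(\gSO_{2n+1})=0$ (Merkurjev) and $\Dec=\Sdec$ for simple groups (the main theorem of \cite{MNZ}); then $\Sinv(G)=\bigoplus_i \Sinv(\gSO_{2n_i+1})=0$ and likewise $\Inv(G)=0$. For part (2), I would first observe that a product of the form $(\prod_{i=1}^m \gSpin_{2n_i+1})/\gmu_2$ with the single diagonal $\gmu_2$ should be analyzed by the same torus-character bookkeeping as in Proposition~\ref{propB}, \S\ref{par:char}--\S\ref{par:invar}: the character lattice $T^*$ is cut out inside $\bigoplus T^*_{sc}$ by the single congruence $\sum_i a^{(i)}_{n_i}\equiv 0 \bmod 2$ on the coefficients of the spin fundamental weights. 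Exactly as in the proof of Proposition~\ref{propB} one gets $Q(G)=\{\sum d_i q_i \mid \sum d_i\equiv 0\bmod 2\}$ and $\Dec(G)=\bigoplus_i 2\Z q_i$ (each $c_2(\rho(\omega_1^{(i)}))=2q_i$ lies in $\Dec$, and the sign-switching subgroups $(\Z/2\Z)^{n_i}$ of the Weyl group force divisibility by $2$ in every coordinate, cf.\ \cite[Lemma 14.2]{GMS}), whence $\Inv(G)\simeq(\Z/2\Z)^{\oplus m-1}$. For $\Sdec$, the semi-decomposable elements $q_i-q_j$ are realized by an element $e^{\omega^{(i)}_{n_i}}(\bar\rho(\omega^{(i)}_{n_i})-\bar\rho(\omega^{(j)}_{n_j}))$ of $\Z[T^*]\cap I_{sc}^W$ \emph{only} when both $n_i=n_j=2$ (the $C_2=B_2$ coincidence is what makes $e^{\omega_2}$ a character giving the needed weight in $I_{sc}^3$), reproducing the case analysis of Proposition~\ref{propB} pairwise; blocks with $n_i\ge 3$ contribute $0$ to $\Sinv$ by the congruence-mod-$2$ obstruction in the last paragraph of that proof. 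Summing over the $k$ indices with $n_i=2$ gives $\Sinv(G)=(\Z/2\Z)^{\oplus k-1}$.

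For part (3), $G$ splits as a product of $\gSpin$'s (not divided by anything) and $\gSO$'s. A $\gSpin_{2n+1}$ factor is simply connected, and here $G$ contains no nontrivial central $\gmu_2$ quotient, so $G$ is a product of (i) simply connected simple groups $\gSpin_{2n_i+1}$ and (ii) adjoint simple groups $\gSO_{2n_i'+1}$. By \eqref{eq:product} and $\Dec=\Sdec$ for simple groups, $\Sinv(G)=0$; and $\Inv(G)=\bigoplus_i\Inv(\gSpin_{2n_i+1})\oplus\bigoplus_i\Inv(\gSO_{2n_i'+1})$. Here I would invoke Merkurjev's simple-group computation: $\Inv(\gSpin_{2n+1})=\Z/2\Z$ for $n\ge 3$ and $=0$ for $n=2$ (since $\gSpin_5=\gSP_4$ has no indecomposable degree-$3$ invariants), while $\Inv(\gSO_{2n'+1})=0$ always; so only the $\gSpin$ factors with $n_i\ge3$ contribute, giving $\Inv(G)=(\Z/2\Z)^{\oplus k}$ with $k=\#\{i: n_i\ge3\}$.

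The main obstacle is part~(2): one must carry out the torus-character and Killing-form computation for an $m$-fold product divided by a \emph{single} diagonal $\gmu_2$, rather than just quoting the $2$-factor case of Proposition~\ref{propB}, and one must verify that the semi-decomposable classes $q_i-q_j$ behave pairwise exactly as in the proposition — i.e.\ that the construction \eqref{eq:typeBgen} using $e^{\omega_{n_i}}$ works iff $n_i=n_j=2$, and that no ``new'' semi-decomposable classes appear from mixing three or more factors. Controlling this amounts to redoing the $c_2$-image and $I_{sc}^W$ analysis of Proposition~\ref{propB} in the presence of several $\bar\rho(\omega^{(i)}_{n_i})$ simultaneously; once the mod-$2$ obstruction argument from the last paragraph of that proof is seen to be insensitive to the number of factors, the rest is bookkeeping via \eqref{eq:product} and \eqref{eq:inclus}.
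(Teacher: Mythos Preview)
Your proposal is correct and follows the paper's approach closely: parts~(1) and~(3) reduce via \eqref{eq:product} and the simple-group equalities $\Dec=\Sdec$ to known values of $\Inv$ for $\gSO_{2n+1}$ and $\gSpin_{2n+1}$, and part~(2) reruns the torus-character and $c_2$ analysis of Proposition~\ref{propB} with $m$ factors, which is precisely what the paper's own proof says (``This follows by the same argument as in Proposition~\ref{propB}''). One small correction to your explanation: the reason the element of type~\eqref{eq:typeBgen} contributes a nontrivial class in $\Sinv$ only when $n_i=n_j=2$ has nothing to do with the $B_2=C_2$ coincidence or with $I_{sc}^3$; it is simply that $c_2(\bar\rho(\omega_n))=2^{n-2}q$, so the resulting class already lies in $\Dec(G)=\bigoplus 2\Z q_i$ as soon as $n\ge 3$, while the obstruction argument (coefficient of $q_j$ forced even whenever $n_j\ge 3$) rules out $q_i-q_j\in\Sdec(G)$ in the mixed cases.
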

\begin{proof}
(1) We set $d'=0$ in (\ref{QGB}). Then, we obtain $Q(\gSO_{2m+1})=2\Z q$ (\cite[\S 4b]{Mer}). It immediately follows from the proof of Proposition~\ref{propB} that we also have $\Dec(\gSO_{2m+1})=2\Z q$.

(2) This follows by the same argument as in Proposition~\ref{propB}.

(3) As $\Dec(\gSpin_{2n+1})=\Sdec(\gSpin_{2n+1})$ for any $n\geq 2$, the same argument as in $(1)$ shows that $\Sdec(G)=\Dec(G)$. By \cite[Theorem 13.4]{GMS}, $Q(\gSpin_{2n+1})=2\Dec(\gSpin_{2n+1})$ for any $n\geq 3$ and $\Inv(\gSpin_{5})=0$, thus the same argument as in $(1)$ proves the result for the indecomposable group.\end{proof}

\section{Type ${\rm C}$}

In the present section we compute the groups of indecomposable and semi-decomposable invariants for semisimple groups of type ${\rm C}$.
In particular, we show that for groups $G=(\prod_{i=1}^{m}\gSP_{2n_{i}})/\gmu_2$, where $m\ge 2$, $n_i\not\equiv 0\bmod 4$ for all $i=1,\ldots,m$, and $\gmu_{2}$ is the diagonal subgroup, any indecomposable invariant is semi-decomposable.

We consider the index $2$ case, which generalizes the example \cite[Example~3.1]{MNZ} (the case $n=m=1$) to groups of type ${\rm C}$.

\begin{proposition}\label{prop:typec}
	Let $G=(\gSP_{2m}\times \gSP_{2n})/\gmu_2$, $m,n\ge 1$ where $\gmu_2$ is the diagonal subgroup. Then, we have 
	{\small \[\Inv(G)=\begin{cases}\Z/2\Z\oplus \Z/2\Z & \text{ if } m\equiv n\equiv 0 \bmod 4,\\
		\Z/2\Z & \text{ otherwise,} \end{cases}\]}
	and 
	{\small \[\Sinv(G)=\begin{cases}\Z/2\Z & \text{ if } m\equiv n\equiv 0  \text{ or } m\not\equiv 0\not\equiv n \bmod 4,\\
		0 & \text{ otherwise.} \end{cases}\]}
	In particular, if both $n$ and $m$ are not divisible by 4, then each indecomposable invariant is semi-decomposable.
 \end{proposition}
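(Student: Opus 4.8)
The plan is to proceed exactly as in Propositions~\ref{lem:primaryindexA} and~\ref{propB}. First, following~\ref{par:char}, the weight lattice is $\Lambda=\Lambda_w({\rm C}_m)\oplus\Lambda_w({\rm C}_n)$ and $T^*$ is the index-$2$ sublattice of $\Lambda$ cut out by the congruence coming from the diagonal $\gmu_2$; since the class of the fundamental weight $\omega_i$ in $\Lambda_w({\rm C}_m)/\Lambda_r({\rm C}_m)\cong\Z/2\Z$ is $i\bmod 2$, this congruence reads $\sum_i i\,a_i+\sum_j j\,a_j'\equiv 0\bmod 2$ on $\lambda=\sum_i a_i\omega_i+\sum_j a_j'\omega_j'$. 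Following~\ref{par:invar}, $S^2(T_{sc}^*)^W=\Z q\oplus\Z q'$, where $q$ and $q'$ are the normalized Killing forms of $\gSP_{2m}$ and $\gSP_{2n}$, which in the standard coordinates equal $q=\sum_{i=1}^m e_i^2$ and $q'=\sum_{j=1}^n e_j'^2$. Choosing a $\Z$-basis of $T^*$ adapted to the above congruence and substituting $\phi=dq+d'q'$ produces an explicit divisibility condition on the pair $(d,d')$ that describes $Q(G)=\{dq+d'q'\mid\cdots\}$, whose shape depends on $m,n\bmod 4$.

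Next I would compute $\Dec(G)$, which by definition is generated by the forms $c_2(\rho(\lambda))$, $\lambda\in T^*$. As in the proof of Proposition~\ref{propB}, the Weyl group $W({\rm C}_m)\times W({\rm C}_n)$ contains the sign-change subgroup $(\Z/2\Z)^m\times(\Z/2\Z)^n$, so all mixed monomials $e_ie_j$ cancel when one sums $\chi^2$ over a $W$-orbit; one obtains $c_2(\rho(\lambda))=\alpha q+\beta q'$ with $\alpha,\beta\in\Z$ (equal up to sign to $\tfrac{|W(\lambda)|\,|\mu|^2}{2m}$ and $\tfrac{|W(\lambda)|\,|\mu'|^2}{2n}$, where $\mu,\mu'$ are the ${\rm C}_m$- and ${\rm C}_n$-components of $\lambda$). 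Evaluating this on suitable classes in $T^*$, such as the orbits of $2\omega_1$, $2\omega_1'$, $\omega_1+\omega_1'$, $\omega_1-\omega_1'$ and of the even-indexed fundamental weights, produces a finite generating set of $\Dec(G)$; a short case analysis on $v_2(m)$ and $v_2(n)$ then pins $\Dec(G)$ down exactly, and dividing into $Q(G)$ yields the asserted $\Inv(G)$.

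The heart of the proof is the determination of $\Sdec(G)=c_2(\Z[T^*]\cap I_{sc}^W)$. For the cases with $\Sinv(G)\neq 0$, I would use the explicit generators $h_{k,i}$ of Definition~\ref{dfn:genlist} --- available here because the root system of type~${\rm C}$ satisfies the generalized flatness condition, so that Theorem~\ref{thm:maingen} applies --- in the spirit of the element~\eqref{eq:typeBgen}: each $h_{1,i}$ has the form $e^{\lambda_0}w$ with $\lambda_0\in\Lambda^{(1)}$ and $w$ a $\Z$-combination of the classes $\bar\rho(\omega_i)$ attached to the degree-one fundamental weights; since every $\bar\rho(\omega_i)$ lies in $I_{sc}^2$ (its $W$-orbit sum being zero), one has $(e^{\lambda_0}-1)w\in I_{sc}^3$, hence $c_2(h_{1,i})=c_2(w)$ by $c_2(I_{sc}^3)=0$, and one checks via the formula of the previous paragraph that $c_2(w)$ (typically a multiple of $q-q'$) does not lie in $\Dec(G)$; the $h_{2,i}$ and $h_{3,i}$ turn out to contribute only elements already in $\Dec(G)$. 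For the cases with $\Sinv(G)=0$, I would argue as in the last part of Proposition~\ref{propB}: writing an arbitrary $x\in\Z[T^*]\cap I_{sc}^W$ as $x=\sum_i(d_i+\delta_i)\bar\rho(\omega_i)+\sum_j(d_j'+\delta_j')\bar\rho(\omega_j')$ with $d_i,d_j'\in\Z$ and $\delta_i,\delta_j'\in I_{sc}$, we get $c_2(x)=\sum_i d_i\,c_2(\bar\rho(\omega_i))+\sum_j d_j'\,c_2(\bar\rho(\omega_j'))$, and then the congruences forced on the $d_i,d_j'$ by $x\in\Z[T^*]$, together with the explicit divisibilities of the $c_2(\bar\rho(\omega_i))$ found above, force $c_2(x)\in\Dec(G)$.

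I expect the main obstacle to be precisely this last step. Unlike type~${\rm B}$ with $m,n\geq 3$, where $\Dec$ of each simple factor already contains $2q$ (resp.\ $2q'$) and the lattice constraint $x\in\Z[T^*]$ plays no role, in type~${\rm C}$ the class $c_2(\bar\rho(\omega_i))$ can be an odd multiple of $q$ for suitable $i$, so the congruence defining $T^*$ must be combined delicately with these divisibilities; this interaction is exactly what produces the trichotomy ``$m\equiv n\equiv 0$'' versus ``$m\not\equiv 0\not\equiv n$'' versus the mixed case, and in particular the final assertion that every indecomposable invariant is semi-decomposable when neither $m$ nor $n$ is divisible by $4$.
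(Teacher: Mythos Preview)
Your plan for $Q(G)$ and $\Dec(G)$ is exactly the paper's: the same standard basis, the same sign--change argument giving $\Dec(G)\subseteq 2\Z q\oplus 2\Z q'$, and the same test classes ($2\omega_1$, $2\omega_1'$, $\omega_1\pm\omega_1'$, $x_2=e_1-e_2$, \ldots) to pin $\Dec(G)$ down; the resulting $Q(G)=\{dq+d'q'\mid dm+d'n\equiv 0\bmod 4\}$ and the case split for $\Inv(G)$ are identical. For the lower bound on $\Sdec(G)$ the paper also uses precisely your element: it sets $z=\tfrac{n}{\gcd(m,n)}\bar\rho(\omega_1)-\tfrac{m}{\gcd(m,n)}\bar\rho(\omega_1')$ and $y=e^{e_1}z$, noting (cf.\ the paper's remark at~\eqref{eq:typeCgen}) that this is the generator $h_{1,i}$ of Definition~\ref{dfn:genlist}; then $c_2(y)=c_2(z)$ because $\bar\rho(\omega_1),\bar\rho(\omega_1')\in I_{sc}^2$. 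So on all of this you and the paper agree.

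The genuine gap is the step you yourself flag as the obstacle: the \emph{upper bound} on $\Sdec(G)$ when $m\equiv n\equiv 0\bmod 4$, i.e.\ proving that $\Sinv(G)$ is only $\Z/2\Z$ and not all of $\Inv(G)=(\Z/2\Z)^2$. Here the Proposition~\ref{propB} template fails exactly as you anticipate, since $c_2(\bar\rho(\omega_1))=q$ is an odd multiple. The paper's device is a ring homomorphism
\[
\phi\colon \Z[T_{sc}^*]\longrightarrow \Z[T_{sc}^*/T^*]=\Z[t]/(t^2-2t),
\qquad 1-e^{-\omega_{odd}}\mapsto t,\ 1-e^{-\omega_{even}}\mapsto 0
\]
(and similarly for the primed weights). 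Applying $\phi$ to an arbitrary $x=\sum_i(d_i+\delta_i)\bar\rho(\omega_i)+\sum_j(d_j'+\delta_j')\bar\rho(\omega_j')\in\Z[T^*]\cap I_{sc}^W$ kills $x$ (since $T^*=\ker\phi$), and using $\phi(\bar\rho(\omega_i))=2^i\binom{m}{i}t$ for odd $i$ one reads off the $2$--adic constraint $(m/2^{r-1})d_1+(n/2^{r-1})d_1'\equiv 0\bmod 2$, where $2^{r-1}=\min(v_2(m),v_2(n))$. Combined with $c_2(x)\equiv d_1q+d_1'q'\bmod\Dec(G)$ this forces $c_2(x)$ into the cyclic subgroup generated by $c_2(y)$. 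This passage through $\Z[T_{sc}^*/T^*]$ is the missing idea in your outline; without it one cannot separate $\Sdec(G)$ from $Q(G)$ in the $m\equiv n\equiv 0\bmod 4$ case.

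One remark on your alternative route via Theorem~\ref{thm:maingen}: since in type~${\rm C}$ every $\bar\rho(\omega_i)$ lies in $I_{sc}^2$ (the orbit sums vanish, as you note), all the generators $h_{k,i}$ lie in $I_{sc}^2$, and hence $c_2(fh_{k,i})=\epsilon(f)\,c_2(h_{k,i})$ for any $f\in\Z[T^*]$. So $\Sdec(G)$ is genuinely the \emph{group} generated by the $c_2(h_{k,i})$, and a complete calculation of these would also give the upper bound. The paper does not take this route (it only cites $h_{1,i}$ for the lower bound), preferring the quicker $\phi$--argument, but your idea can be made to work if you carry it through.
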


\begin{proof}
Let $\{e_{1},\cdots, e_{m}, e'_{1},\cdots, e_{n}'\}$ be a standard basis of $T_{sc}^*=\Z^{m}\oplus \Z^{n}$. Then $T^*$ consists of all linear combinations of standard basis elements with even sums of coefficients.
Consider the $\Z$-basis $\{x_1,\ldots,x_m,x_1',\ldots, x_n'\}$ of $T^{*}$ given by
\[
x_1=e_{1}+ e_{1}',\; x_1'=e_1-e_{1}',\; x_i=e_{1}-e_{i},\; x_j'= e'_{1}-e_{j}',\; i,j,>1,
\]
The standard basis can be expressed in terms of this basis over $\Q$ as
\[
e_i=e_1-x_i,\; e_j'=e_1'-x_j',\text{ where }e_1=\tfrac{1}{2}(x_1+x_1') \text{ and }e_1'=\tfrac{1}{2}(x_1-x_1').
\]
The group $S^{2}(T_{sc}^*)^{W}$ is generated by $q=\sum_{i=1}^{m} e_{i}^{2}$ and $q'=\sum_{j=1}^{n} e_{j}'^{2}$. 
Therefore, for any $\phi\in S^{2}(T_{sc}^*)^{W}$ there exist $d, d'\in \Z$ such that $\phi=dq +d'q'=$
{\small \[
d\Big(\tfrac{1}{4}(x_1+x_1')^2+\sum_{i>1} (\tfrac{1}{2}(x_1+x_1')-x_i)^2\Big)+d'\Big(\tfrac{1}{4}(x_1-x_1')^2+\sum_{j>1} (\tfrac{1}{2}(x_1-x_1')-x_j')^2\Big).
\]}
The form $\phi$ has integer coefficients at $x_1x_1'$, $x_ix_j'$, $x_i^2$, $x_j'^2$, $i,j>1$ and it has coefficient $\tfrac{1}{4}(dm+d'n)$ at $x_1^2$ and at $x_1'^2$. Hence, 
\begin{equation}\label{QGC}
Q(G)=\{dq+d'q'\,|\,  dm+d'n\equiv 0 \bmod 4\}.
\end{equation}

Consider the subgroup $\Dec(G)$ of decomposable invariants of $G$.
As in the proof of \cite[Lemma 14.2]{GMS}, since the Weyl group of $G$ contains normal subgroups $(\Z/2\Z)^m$ and $(\Z/2\Z)^n$ generated by sign switching, we conclude that the coefficient at each $e_i$ in the expansion of $q_\chi$ is divisible by 2, hence, $\Dec(G)\subseteq 2\Z q\oplus 2\Z q'$. Since $c_{2}(\bar\rho(2e_{1}))=4q$ and $c_{2}(\bar\rho(2e_{1}'))=4q'$, we have $4\Z q\oplus 4\Z q'\subseteq \Dec(G)$.

Assume $n\equiv m\equiv 0 \bmod 2$. Since $c_{2}(\bar\rho(x_2))=2(m-1)q$ and $c_{2}(\bar\rho(x_2'))=2(n-1)q'$, 
we obtain $\Dec (G)=2\Z q\oplus 2\Z q'$ and $Q(G)=\{dq+d'q'\mid d\tfrac{m}{2}+d'\tfrac{n}{2}\equiv 0\bmod 2\}$. Hence,
{\small \[
\Inv(G) \simeq
\begin{cases}
(\Z/2\Z) q \oplus (\Z/2\Z) q' & \text{ if }n\equiv m\equiv 0\bmod 4, \\
(\Z/2\Z) q & \text{ if }m\equiv 0\not\equiv n \bmod 4, \\
(\Z/2\Z) q' & \text{ if }m\not\equiv 0\equiv n\bmod 4,\\
(\Z/2\Z) (q+q') & \text{ if }m\not \equiv 0 \not\equiv n \bmod 4.
\end{cases}
\]}

Assume both $n$ and $m$ are odd. If $n\equiv -m\bmod 4$, then $Q(G)=\{dq+d'q'\mid d\equiv d'\bmod 4\}\simeq \Z/4\Z (q+q')$. Since $c_{2}(\bar\rho(x_1))=2nq+2mq'$, $\Dec(G)=2\Z (q+q')\oplus 2\Z (q-q')$ and, therefore, $\Inv(G)\simeq (\Z/2\Z)(q+q')$. Similarly, if $n\equiv m\bmod 4$, then
$\Inv(G)\simeq (\Z/2\Z)(q-q')$.

Finally, assume $n$ is odd and $m$ is even. If $m\equiv 0\bmod 4$, then $Q(G)=\Z q\oplus 4\Z q'$. Since $c_{2}(\bar\rho(x_2))=2(m-1)q$, $\Dec(G)=2\Z q\oplus 4\Z q'$, hence, $\Inv(G)\simeq (\Z/2\Z)q$. If $m\not\equiv 0\bmod 4$, then $d'$ is even and 
\[
Q(G)=\{dq+d'q' \mid d+\tfrac{d'}{2}\equiv 0\bmod 2\}=\{(\bar 0,\bar 0),(\bar 2,\bar 0),(\bar 1,\bar 2),(-\bar 1,\bar 2)\},
\]
where $(\bar d,\bar d')$ denotes $dq+d'q'$ modulo 4. Since $\Dec(G)=2\Z q +4\Z q'$, we have $\Inv(G)\simeq (\Z/2\Z) (q+2q')$.

As for semi-decomposable invariants, consider an element (cf. with $h_{1,i}$ of Definition~\ref{dfn:genlist})
\begin{equation}\label{eq:typeCgen}
z=\tfrac{n}{gcd(m,n)}\bar\rho(\omega_1)-\tfrac{m}{gcd(m,n)}\bar\rho(\omega_1').
\end{equation}
By definition, we have $y=e^{e_{1}}z\in \Z[T^{*}]\cap I_{sc}^{W}$ and we obtain
\[
c_{2}(y)=c_2((1+(e^{e_1}-1))z)=c_{2}(z)=\tfrac{n}{gcd(m,n)}q-\tfrac{m}{gcd(m,n)}q'
\]
where the second equality holds since $(e^{e_1}-1)z\in I_{sc}^3$.
The element $c_2(y)\in \Sdec(G)$ coincides with the generator of $\Inv(G)=\Z/2\Z$ except if $n\equiv m \equiv 0\bmod 4$.
So $\Sinv(G)=\Inv(G)=\Z/2\Z$ except if  $n\equiv m \equiv 0\bmod 4$.

Assume that $m\equiv n\equiv 0 \bmod 4$. Then $\{q, q'\}$ are generators of the group of indecomposable invariants. Consider an arbitrary element $x\in \Z[T^{*}]\cap I_{sc}^{W}$ and the ring homomorphism 
\[\phi\colon \Z[T_{sc}^*]\to \Z[T_{sc}^*/T^{*}]=\Z[t]/(t^{2}-2t)\]
given by $\phi(1-e^{-w_{odd}})=\phi(1-e^{-w_{odd}'})=t$ and $\phi(1-e^{-w_{even}})=\phi(1-e^{-w_{even}'})=0$. Write 
{\small \[
x=\sum_{i=1}^m (d_i+\delta_i)\bar\rho(\omega_i)+\sum_{j=1}^n (d_i'+\delta_i')\bar\rho(\omega_i'),\quad d_i,d_j'\in \Z,\; \delta_i,\delta_j'\in I_{sc}.
\]}
Since $\ker \phi \supset \Z[T^{*}]\cap I_{sc}^{W}$, we obtain
{\small \[
0=\phi(x)=\sum_{odd\; i}2^i{m\choose i}(d_i+2s_i)t+\sum_{odd\; j}2^j{n\choose j}(d_j'+2s_j')t.
\]}
Observe that if $2^r\mid m$ and $i$ is odd, then $2^r\mid {m\choose i}$. Dividing by the 2-primary part $2^r$ of the greatest common divisor of all the coefficients we obtain
\[
(m/2^{r-1})d_1+(n/2^{r-1})d_1'\equiv 0\bmod 2,
\]
where $2^{r-1}=g.c.d.(v_2(n),v_2(m))$ is the g.c.d. of the 2-primary parts. Therefore,
{\small \begin{equation}\label{eq:divis}
\begin{cases} d_{1}+d_{1}'\equiv 0\bmod 2 & \text{ if } v_2(m)=v_2(n) \\
d_{1}\equiv 0\bmod 2 & \text{ if } v_2(m)<v_2(n).\end{cases}
\end{equation}}
We then have
{\small \[
c_2(x)=(\sum_{i=1}^m 2^{i-1}{m-1\choose i-1}d_i)q+(\sum_{j=1}^n 2^{j-1}{n-1 \choose j-1}d_j')q'.
\]}
So
$
c_2(x)\equiv d_1q+d_1'q'\bmod \Dec(G)$, where $d_1$ and $d_1'$ satisfy  \eqref{eq:divis}.

Since {\small \[
c_2(y)=\begin{cases} q+q' \bmod \Dec(G) & \text{ if }v_2(n)=v_2(m) \\ q' \bmod \Dec(G) & \text{ if }v_2(m)<v_2(n)\end{cases}\]}
we conclude that $c_2(y)$ is also a generator of $\Sinv(G)\simeq \Z/2\Z$.
\end{proof}

We now present a generalization of the previous proposition, which in turn determine both indecomposable and semi-decomposable subgroups for an arbitrary semisimple group of type ${\rm C}$.

\begin{corollary}\label{cor:typec}
	(1) Let $G=\prod_{i=1}^{m}\gPGSP_{2n_{i}}$, $m,n_{i}\ge 1$. Then, {\small \[\Inv(G)\simeq (\Z/2\Z)^{\oplus k},\]} where $k$ is the number of $n_{i}$'s which are divisible by~$4$, and $\Sdec(G)=\Dec(G)$, i.e., each semi-decomposable invariant is decomposable.	
	
		(2) Let $G=(\prod_{i=1}^{m}\gSP_{2n_{i}})/\gmu_2$, $m, n_{i}\geq 1$, where $\gmu_{2}$ is the diagonal subgroup. Then,  
		{\small \[\Inv(G)=\begin{cases}(\Z/2\Z)^{\oplus m} & \text{ if } \forall n_{i}\equiv 0\bmod 4,\\
			(\Z/2\Z)^{\oplus m-1} & \text{ otherwise,} \end{cases}\]}
		and 
		{\small \[\Sinv(G)=\begin{cases}(\Z/2\Z)^{\oplus m-1} & \text{ if } \forall n_{i}\equiv 0  \text{ or } \forall n_{i}\not\equiv 0 \bmod 4,\\
			(\Z/2\Z)^{\oplus m-2} & \text{ otherwise.} \end{cases}\]}
			
		(3) Let $G=(\prod_{i=1}^{m}\gPGSP_{2n_i})\times (\prod_{i=1}^{m'}\gSP_{2n_{i}'})$, $n_{i}, n_{i}'\geq 1$. Then, {\small \[\Inv(G)=(\Z/2\Z)^{\oplus k} \text{ and } \Sdec(G)=\Dec(G),  \]}
		where $k$ is the number of $n_{i}$'s which are divisible by~$4$.
\end{corollary}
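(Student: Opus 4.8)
The plan is to reduce everything to Proposition~\ref{prop:typec}, the multiplicativity relations~\eqref{eq:product}, the equality $\Sdec(H)=\Dec(H)$ for simple $H$ (\cite{MNZ}), and the fact that $\Sdec$ is additive over \emph{direct} products: if $G=G_1\times G_2$ then $T^*=T^*_1\oplus T^*_2$, and the specialization ring homomorphisms $\Z[T^*_{sc}]\to\Z[T^*_{i,sc}]$ that collapse the complementary factor carry $I^W_{sc}$ onto $I^{W_i}_{i,sc}$ and commute with $c_2$ up to the evident projection, forcing $\Sdec(G_1\times G_2)=\Sdec(G_1)\times\Sdec(G_2)$. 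Granting this, parts (1) and (3) are short: $\gPGSP_{2n}$ is split simple adjoint of type $\rm C$, and a direct lattice computation (or \cite[\S4]{Mer}) gives, with $q$ the normalized Killing form, $Q(\gPGSP_{2n})=\tfrac{4}{\gcd(4,n)}\Z q$ and $\Dec(\gPGSP_{2n})=2\Z q$ when $n$ is even, $4\Z q$ when $n$ is odd; hence $\Inv(\gPGSP_{2n})\simeq\Z/2\Z$ exactly when $4\mid n$. By multiplicativity of $\Inv=Q/\Dec$ this gives $(\Z/2\Z)^{\oplus k}$ for the products of $\gPGSP$'s (and for the adjoint factor in (3)), while the simply connected factors $\gSP_{2n_i'}$ satisfy $\Inv(\gSP_{2n_i'})=0$; since every simple factor has $\Sdec=\Dec$, so do the (direct-product) groups in (1) and (3).

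The new content is part (2), $G=(\prod_{i=1}^m\gSP_{2n_i})/\gmu_2$. As in Proposition~\ref{prop:typec}, take $T^*_{sc}=\bigoplus_i\Z^{n_i}$ with standard bases $e_j^{(i)}$, so $T^*=\{\lambda:\sum_{i,j}\lambda_j^{(i)}\text{ even}\}$ and $W$ acts on each factor by signed permutations. Using the $\Z$-basis of $T^*$ given by $e_j^{(i)}-e_1^{(i)}$ $(2\le j\le n_i)$, $e_1^{(1)}+e_1^{(i)}$ $(2\le i\le m)$ and $2e_1^{(1)}$, and expanding $\phi=\sum_i d_iq_i$ with $q_i=\sum_j(e_j^{(i)})^2$ in it, the only possibly non-integral coefficient is $\tfrac14\sum_i d_in_i$, at $(2e_1^{(1)})^2$; hence
\[
Q(G)=\Bigl\{\,\sum_i d_iq_i\;\Bigm|\;\sum_i d_in_i\equiv 0\bmod 4\,\Bigr\}.
\]
The sign-switching subgroups $(\Z/2\Z)^{n_i}\subseteq W$ give $\Dec(G)\subseteq\bigoplus_i 2\Z q_i$ (as in \cite[Lemma~14.2]{GMS}), and computing $c_2$ of the augmented orbits of $2e_1^{(i)}$, of $e_1^{(i)}-e_2^{(i)}$ and of $e_1^{(i)}+e_1^{(i')}$ --- namely $\pm4q_i$, $\pm2(n_i-1)q_i$ and $\pm(2n_{i'}q_i+2n_iq_{i'})$ --- pins $\Dec(G)$ down according to the residues $n_i\bmod 4$; this yields $\Inv(G)$ as stated.

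For $\Sdec(G)$ I follow the last part of the proof of Proposition~\ref{prop:typec}. \emph{Lower bound.} For $i\neq i'$ and $g=\gcd(n_i,n_{i'})$, the element
\[
y_{i,i'}=e^{e_1^{(1)}}\Bigl(\tfrac{n_{i'}}{g}\,\bar\rho(\omega_1^{(i)})-\tfrac{n_i}{g}\,\bar\rho(\omega_1^{(i')})\Bigr)
\]
lies in $\Z[T^*]\cap I^W_{sc}$: the bracket has vanishing constant term, so each of its weights has odd coordinate-sum and $e^{e_1^{(1)}}$ pushes them into $T^*$. Since $(e^{e_1^{(1)}}-1)\cdot(\text{bracket})\in I^3_{sc}$ we get $c_2(y_{i,i'})=\pm\bigl(\tfrac{n_{i'}}{g}q_i-\tfrac{n_i}{g}q_{i'}\bigr)\in\Sdec(G)$; more generally one uses $e^{\lambda}\sum_ic_i\bar\rho(\omega_1^{(i)})$ for any $\lambda$ of odd coordinate-sum and integers $c_i$ with $\sum_ic_in_i=0$, together with augmented orbits of $\omega_j^{(i)}$, $j\ge 3$ odd, which relax the constraint $\sum_ic_in_i=0$ while contributing nothing modulo $\Dec(G)$ (their $c_2$ already lies in $4\Z q_i$). \emph{Upper bound.} The ring homomorphism $\varphi\colon\Z[T^*_{sc}]\to\Z[T^*_{sc}/T^*]=\Z[t]/(t^2-2t)$ sending $1-e^{-\omega_j^{(i)}}$ to $t$ for $j$ odd and to $0$ for $j$ even kills $\Z[T^*]\cap I^W_{sc}$; writing $x\in\Z[T^*]\cap I^W_{sc}$ as $\sum_{i,j}(d_{i,j}+\delta_{i,j})\bar\rho(\omega_j^{(i)})$ with $\delta_{i,j}\in I_{sc}$, applying $\varphi$ and dividing by the $2$-primary part $2^{1+\nu}$ (where $\nu=\min_i v_2(n_i)$) of the gcd of the binomial coefficients involved yields $\sum_{i:\,v_2(n_i)=\nu}d_{i,1}\equiv 0\bmod 2$, while $c_2(x)\equiv\sum_i d_{i,1}q_i\bmod\Dec(G)$. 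Matching the two bounds in the cases ``all $n_i\equiv 0\bmod 4$'', ``all $n_i\not\equiv 0\bmod 4$'' and the mixed case gives $\Sinv(G)$. I expect the main obstacle to be precisely this comparison: verifying that the $c_2$-images of the $y_{i,i'}$ (and of the auxiliary $j\ge 3$ elements) span, modulo $\Dec(G)$, exactly the subgroup cut out by the congruence from $\varphi$ --- which requires carefully tracking how the $2$-adic valuations $v_2(n_i)$ enter both $\Dec(G)$ and that congruence.
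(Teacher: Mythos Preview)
Your proposal is correct and largely parallels the paper, but for parts (1) and (3) you take a genuinely different route. The paper does not invoke multiplicativity of $\Sdec$ over direct products; instead, for $G=\gPGSP_{2m}\times\gPGSP_{2n}$ with $m\equiv n\equiv 0\bmod 4$ it runs the $\phi$-argument directly with the two-variable target $\Z[T^*_{sc}/T^*]=\Z[t,t']/(t^2-2t,\,t'^2-2t')$, extracts the two separate congruences $d_1\equiv 0$ and $d_1'\equiv 0\bmod 2$, and concludes $\Sdec(G)\subseteq 2\Z q\oplus 2\Z q'=\Dec(G)$ (the remaining cases being trivial since $\Inv=0$ there). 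Your observation that the specialization $\Z[T^*_{sc}]\to\Z[T^*_{i,sc}]$ intertwines $c_2$ with the projection $S^2(T^*_{sc})\to S^2(T^*_{i,sc})$ is correct and gives the cleaner statement $\Sdec(G_1\times G_2)=\Sdec(G_1)\oplus\Sdec(G_2)$; combined with $\Sdec=\Dec$ for simple groups (\cite{MNZ}) this dispatches (1) and (3) without any case analysis and scales immediately to arbitrarily many factors, whereas the paper's direct computation has to be ``adapted'' to three or more factors.

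For part (2) your argument is essentially the paper's (which just says ``the same argument as in Proposition~\ref{prop:typec}''). Two small comments on your sketch there. First, the upper bound congruence $\sum_{v_2(n_i)=\nu}d_{i,1}\equiv 0\bmod 2$ and the reduction $c_2(x)\equiv\sum_i d_{i,1}q_i$ is only literally modulo $\bigoplus_i 2\Z q_i$, which equals $\Dec(G)$ precisely in the ``all $n_i\equiv 0\bmod 4$'' case; in the other cases the paper (for $m=2$) does not use $\phi$ at all but simply checks that the single element $c_2(y)$ already hits the generator of $\Inv(G)$, so $\Sinv=\Inv$ there, and the mixed case falls out by combining the two. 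Second, your ``relax the constraint with $\omega_j^{(i)}$, $j\ge 3$'' device is unnecessary: the pairwise elements $y_{i,i'}$ alone, together with the observation that $c_2(y_{i,i'})\equiv q_i+q_{i'}$ when $v_2(n_i)=v_2(n_{i'})$ and $\equiv q_{i'}$ when $v_2(n_i)<v_2(n_{i'})$ (modulo $2\Z q_i\oplus 2\Z q_{i'}$), already span exactly the subgroup cut out by the $\phi$-congruence in the ``all $\equiv 0\bmod 4$'' case. So the matching you flag as the main obstacle is in fact straightforward once one separates the three cases as the paper does.
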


\begin{proof}
(1) Let $G_1=\gPGSP_{2m}$ and $G_2=\gPGSP_{2n}$, $m, n\geq 1$. It suffices to consider the case $G=G_{1}\times G_{2}$ since the same arguments can be easily adapted to prove the case of three and more groups. We simply set $d'=0$ (resp. $d=0$) in \ref{QGC}. Then, we have $Q(G_{1})=4/\gcd(4,m)\Z q$ (resp. $Q(G_{2})=4/\gcd(4,n)\Z q'$). Similarly, by the proof of Proposition \ref{prop:typec} we get $\Dec(G_{1})=4/\gcd(2,m)\Z q$ (resp. $\Dec(G_{2})=4/\gcd(2,n)\Z q'$) (\cite[\S4b]{Mer}). By~\eqref{eq:product} the answer for $\Inv(G)$ then follows.

As for semi-decomposable invariants, by~\eqref{eq:inclus} and
$\Dec(G)=Q(G)$ for $n\not\equiv 0\bmod 4$,
it suffices to consider the case $n\equiv m\equiv 0 \bmod 4$. We follow arguments used in \cite[\S 3c]{MNZ}.

Let $x\in \Z[T^*]\cap I_{sc}^W$ be an arbitrary element. Write 
\begin{equation*}
x=\sum_{i=1}^{m} (d_{i}+\delta_{i})\bar{\rho}(\omega_{i})+\sum_{j=1}^{n} (d_{j}'+\delta_{j}')\bar{\rho}(\omega_{j}')
\end{equation*}
for some $d_{i}, d_{j}'\in \Z$ and $\delta_{i}, \delta_{j}'\in I_{sc}$. 
Consider the ring homomorphism induced by the quotient map $T^*_{sc} \to T^*_{sc}/T^*$ 
\[
\phi\colon \Z[T_{sc}^*]\longrightarrow \Z[T_{sc}^*/T^*]=\Z[\Lambda_{w}/\Lambda_{r}]\otimes \Z[\Lambda_{w}'/\Lambda_{r}']=\Z[t,t']/(t^{2}-2t, t'^{2}-2t').
\] It is given by \[
\phi(1-e^{-\omega_{odd}})=t,\; \phi(1-e^{-\omega_{odd}'})=t'\text{ and }\phi(1-e^{-\omega_{even}})=\phi(1-e^{-\omega_{even}'})=0.\] 

Since $x\in \Z[T^*]$, $\phi(x)=0$. Moreover, we have $\phi(I_{sc})=(t,t')$ and $\phi(\bar{\rho}(\omega_{i}))={|W(\omega_i)|}\cdot  t$, $\phi(\bar{\rho}(\omega_{i}'))=|W(\omega_i')|\cdot t'$. Combining these facts we obtain
\begin{align*}
	0 &=\sum_{i=1}^{m} (d_{i}+\phi(\delta_{i}))\phi(\bar{\rho}(\omega_{i}))+\sum_{j=1}^{n} (d_{j}'+\phi(\delta_{j}'))\phi(\bar{\rho}(\omega_{j}')) \\
	&=\sum_{odd\; i} |W(\omega_i)|\cdot (d_{i}+s_it+s_i't') t+\sum_{odd\; j} |W(\omega_j')|\cdot (d_{j}'+r_jt+r_j't') t',
\end{align*}
for some $s_i, s_i',r_j,r_j'\in \Z$.
Since $|W(\omega_i)|=2^i {m \choose i}$, collecting the coefficients at $t$ and $t'$, we get
\[
d_1+2s_1\equiv d_1'+2r_1'\equiv 0 \bmod 2.
\]
Hence, both $d_1$ and $d_1'$ are even.

We now compute $c_2(x)$. Since $c_2(I_{sc}^3)=0$ and $c_2(\bar\rho(\omega_i))=2^{i-1}{m-1 \choose i-1}q$, we obtain
\[
c_2(x)=\sum_{i=1}^{m} d_{i}c_2(\bar{\rho}(\omega_{i}))+\sum_{j=1}^{n} d_{j}'c_2(\bar{\rho}(\omega_{j}'))=2sq+2rq',\;\text{ for some } r,s\in \Z.
\]
Therefore, $\Sdec(G_1)\subseteq 2\Z q=\Dec(G_1)$ and $\Sdec(G_2)\subseteq 2\Z q'=\Dec(G_2)$.

(2) This immediately follows from the same argument as in Proposition \ref{prop:typec}.

(3) As $\Dec(\gSP_{2n})=\Sdec(\gSP_{2n})=Q(\gSP_{2n})$ for any $n\geq 1$ (\cite[Theorem 14.3]{GMS}), the same argument as in $(1)$ shows that $\Sdec(G)=\Dec(G)$ and the result for the indecomposable subgroup.\end{proof}

\section{Type {\rm D}}

In this section we calculate the groups of indecomposable and semi-decomposable invariants for an arbitrary product of simply-connected simple groups of type ${\rm D}$ modulo the (diagonal) central subgroups. We first consider the groups of index $2$ and $4$.
\begin{proposition}\label{Ddiagonal}
Let $G=(\gSpin_{2m}\times \gSpin_{2n})/\gmu$, where $m, n\geq 4$ and $m+n$ is even, and  $\gmu$ is a diagonal subgroup of $G$. Then 
		{\small \[\Inv(G)=\begin{cases}\Z/4\Z & \text{ if } \gmu\simeq \gmu_{4},\\
		\Z/2\Z & \text{ if } \gmu\simeq \gmu_{2},\end{cases} \quad\text{ and }\quad \Sinv(G)=\begin{cases}\Z/2\Z & \text{ if } \gmu\simeq \gmu_{4},\\
		0 & \text{ if } \gmu\simeq \gmu_{2}.\end{cases}\]}
		
\end{proposition}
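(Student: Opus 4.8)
The plan is to follow the template established in Proposition~\ref{prop:typec} and \cite[\S3c]{MNZ}, working one diagonal subgroup at a time. First I would record the concrete combinatorial data of type ${\rm D}$: the weight lattice $\Lambda_w$ of $\gSpin_{2m}$ with fundamental weights $\omega_1,\ldots,\omega_m$ (the vector weight $\omega_1$ and the two half-spin weights $\omega_{m-1},\omega_m$), the normalized Killing form $q=\tfrac12\sum_i e_i^2$, and the center $\Lambda_w/\Lambda_r$, which is $\Z/4\Z$ when $m$ is odd and $\Z/2\Z\oplus\Z/2\Z$ when $m$ is even. Using \ref{par:char}, the character group $T^*$ of $G$ is cut out inside $T_{sc}^*=\Lambda_w\oplus\Lambda_w'$ by the single congruence coming from the diagonal embedding $\gmu\hookrightarrow Z(G)$; I would choose an explicit $\Z$-basis $\{x_i\}$ of $T^*$ adapted to this congruence (as in the type ${\rm C}$ proof, replacing the ``exceptional'' half-spin generators by a sum and a difference). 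Substituting $\phi=dq+d'q'$ into this basis and reading off the denominators of the coefficients of the exceptional squares then yields the description of $Q(G)=S^2(T^*)^W$ as a congruence on $(d,d')$.

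Next I would compute $\Dec(G)=c_2(\Z[T^*]^W)$. The containment $\Dec(G)\subseteq 2\Z q\oplus 2\Z q'$ follows, as in the type ${\rm B}$ and ${\rm C}$ arguments, from the fact that the Weyl group of $\gSO_{2m}$ contains the normal subgroup of even sign-changes, so every $c_2(\rho(\chi))$ has all $e_i$-coefficients even (c.f.\ \cite[Lemma~14.2]{GMS}); a sharper divisibility in the $\gmu_4$-case will come from \cite[Lemmas~11.3, 11.4]{GMS} applied to the orbit sizes. For the reverse containment I would exhibit explicit orbits: $c_2(\rho(\omega_1))=2q$ and $c_2(\rho(\omega_1'))=2q'$ are available when the vector weights lie in $T^*$, and I would adjust with $c_2(\rho(2\omega_1))$, $c_2(\rho(\omega_1+\omega_1'))$, and (for the $\gmu_4$-case) the half-spin orbits $c_2(\rho(\omega_{m-1}\pm\omega_n'))$ to pin down $\Dec(G)$ exactly in each of the two cases. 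Combining with $Q(G)$ gives $\Inv(G)=Q(G)/\Dec(G)$, which should come out to $\Z/4\Z$ for $\gmu_4$ and $\Z/2\Z$ for $\gmu_2$.

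For the semi-decomposable part I would split according to $\gmu$. In the $\gmu_2$-case the goal is $\Sdec(G)=\Dec(G)$: writing a general $x\in\Z[T^*]\cap I_{sc}^W$ as $\sum_i(d_i+\delta_i)\bar\rho(\omega_i)+\sum_j(d_j'+\delta_j')\bar\rho(\omega_j')$ with $\delta_i,\delta_j'\in I_{sc}$, using $c_2(I_{sc}^3)=0$ to get $c_2(x)=\sum d_i c_2(\bar\rho(\omega_i))+\sum d_j' c_2(\bar\rho(\omega_j'))$, and applying the ring homomorphism $\phi\colon\Z[T_{sc}^*]\to\Z[T_{sc}^*/T^*]$ (as in the proof of Proposition~\ref{prop:typec}, now with a target built from $\Lambda_w/\Lambda_r$) to extract a congruence on the $d_i$'s from $\phi(x)=0$; the key input is that the relevant orbit sizes are divisible by $2$, which forces $c_2(x)\in 2\Z q\oplus 2\Z q'=\Dec(G)$. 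In the $\gmu_4$-case one must instead show $\Sdec(G)$ is strictly between: I would produce an explicit semi-decomposable class, of the shape $y=e^{\lambda_0}z$ with $z$ a suitable $\Z$-linear combination of $\bar\rho(\omega_1)$ and $\bar\rho(\omega_1')$ normalized so that $z\in\Z[T^*]\cap I_{sc}^W$ (the analogue of $h_{1,i}$ from Definition~\ref{dfn:genlist} and of \eqref{eq:typeCgen}), use $(e^{\lambda_0}-1)z\in I_{sc}^3$ to get $c_2(y)=c_2(z)$, and check this represents a class of order $2$ in $\Inv(G)=\Z/4\Z$; the matching upper bound $\Sdec(G)\subseteq$ (that index-$2$ subgroup) comes from the same $\phi$-argument, now yielding a weaker congruence mod $2$ rather than mod $4$.

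\textbf{Main obstacle.} The delicate point is the exact computation of $\Dec(G)$, and correspondingly the $\phi$-argument bounding $\Sdec(G)$, in the $\gmu_4$-case: one has to track $2$-adic valuations of the orbit sizes $|W(\omega_i)|$ — in particular of the half-spin orbit $2^{m-1}$ versus the vector and exterior-power orbits $2^i\binom{m}{i}$ — and combine them with the parity constraints imposed by the diagonal congruence defining $T^*$, possibly splitting into subcases according to the parities of $m$ and $n$ (which govern whether each center is $\Z/4\Z$ or $(\Z/2\Z)^2$ and hence whether a $\gmu_4$-diagonal even exists). Getting these valuation bookkeeping exactly right, so that the numerator and denominator of $\Inv(G)=\Z/4\Z$ and the index-$2$ subgroup $\Sdec(G)$ both land correctly, is where the real work lies; the rest is a routine, if lengthy, adaptation of the type ${\rm B}$ and ${\rm C}$ arguments.
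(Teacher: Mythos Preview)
Your plan is essentially the paper's own argument and would succeed, but you are overcomplicating the $\Sdec$ upper bound. The paper does not use the $\phi\colon\Z[T_{sc}^*]\to\Z[T_{sc}^*/T^*]$ machinery here at all. Instead it records the single observation
\[
c_2(\bar\rho(\omega_i))=2m_i q,\qquad c_2(\bar\rho(\omega_j'))=2m_j'q'\qquad(m_i,m_j'\in\Z),
\]
valid for \emph{every} fundamental weight once $m,n\ge 4$ (the half-spin orbits contribute $2^{m-3}q$, which is still even). Writing $x=\sum(d_i+\delta_i)\bar\rho(\omega_i)+\sum(d_j'+\delta_j')\bar\rho(\omega_j')$ and using $c_2(I_{sc}^3)=0$ then gives $\Sdec(G)\subseteq 2\Z q\oplus 2\Z q'$ in one line, with no congruence extraction needed. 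This already finishes the $\gmu_2$ case (since there $\Dec(G)=2\Z q\oplus 2\Z q'$), and in the $\gmu_4$ case it shows the order-$4$ generator $\tfrac{n}{\gcd(m,n)}q-\tfrac{m}{\gcd(m,n)}q'$ (both coefficients odd, as $m,n$ are odd) is not semi-decomposable; combined with your explicit $y=e^{e_1}z$ giving twice the generator, one gets $\Sinv(G)=\Z/2\Z$. The reason your type~${\rm C}$ template is overkill is that in type~${\rm D}$ the normalized Killing form carries a factor $\tfrac12$, so $c_2(\bar\rho(\omega_1))=-2q$ is already even---exactly the phenomenon that fails in type~${\rm C}$ and forces the $\phi$-argument there.

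A second simplification you are missing: the ``subcases according to the parities of $m$ and $n$'' you flag as the main obstacle largely evaporate. A diagonal $\gmu_4$ exists only when both $m$ and $n$ are odd, and the paper treats that case uniformly; the $\gmu_2$ case is handled by a single uniform computation. For the $\Dec(G)$ upper bound in the $\gmu_4$ case the paper does not invoke \cite[Lemmas~11.3,~11.4]{GMS} either: the even-sign-change argument already gives $\Dec(G)\subseteq 4\Z q\oplus 4\Z q'$ (the coefficient of $e_i^2$ in $c_2(\rho(\lambda))=aq$ is $a/2$, and it is this quantity that the sign-change argument makes even), and then $4q,4q'\notin Q(G)$ together with the explicit orbits $c_2(\rho(2\omega_1))$, $c_2(\rho(\omega_1+\omega_1'))$ pins $\Dec(G)$ down to $4\Z(q-q')\oplus 4\Z(q+q')$.
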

\begin{proof}
Observe that there is a unique diagonal subgroup $\gmu\simeq \gmu_{4}$ in the case where $m$ and $n$ are odd and there are two  different diagonal subgroups $\gmu\simeq \gmu_{2}$ : $\gmu\subseteq \gmu_{4}^{2}$ if both $m$ and $n$ are odd and $\gmu\subseteq \gmu_{2}^{4}$ otherwise. First, assume that $\gmu\simeq \gmu_{4}$. Then, by \ref{par:char} the character group of the split maximal torus $T$ of $G$ is given by
{\small \begin{equation}\label{chT1}
T^{*}=\big\{\sum_{i=1}^{m}a_{i}\omega_{i}+\sum_{j=1}^{n}a_{i}'\omega_{i}' \,\big|\,  a_{m-1}+3a_{m}+2\sum_{i=1}^{\tfrac{m-1}{2}}a_{2i-1}\equiv 3a_{n-1}'+a_{n}'+2\sum_{j=1}^{\tfrac{n-1}{2}}a_{2j-1}' \bmod 4 \big\}.
\end{equation}}
Write $\sum_{i=1}^{m}a_{i}\omega_{i}+\sum_{j=1}^{n}a_{i}'\omega_{i}'=\sum_{i=1}^{m}b_{i}e_{i}+\sum_{j=1}^{n}b_{i}'e_{i}'$ in terms of the standard basis vectors $\{e_{1},\cdots, e_{m}, e'_{1},\cdots, e_{n}'\}$ of $T_{sc}^*=\Z^{m}\oplus \Z^{n}$. Then, the relation in (\ref{chT1}) is equivalent to
{\small \begin{equation}\label{chT2}
2(b_{m-2}+b_{m-1}+b_{m}+\sum_{i=1}^{\tfrac{m-3}{2}}b_{2i-1}-b_{2i})\equiv 2(b_{n-2}'+b_{n-1}'-b_{n}'+\sum_{j=1}^{\tfrac{n-3}{2}}b_{2j-1}-b_{2j}) \bmod 4.
\end{equation}}
Using \eqref{chT1} we choose the following basis $\{x_{1},\ldots, x_{m},x_{1}',\ldots, x'_{n}\}$ of $T^{*}$:
{\small \begin{equation}\label{xid}\begin{split}&x_{2i-1}=\omega_{2i-1}+2\omega_{n}',\,\, x_{2k}=\omega_{2k},\,\, x_{m-1}=\omega_{m-1}+\omega_{n}',\,\, x_{m}=\omega_{m}+3\omega_{n}',\\
&x_{2j-1}'=\omega_{2j-1}'+2\omega_{n}',\,\, x_{2l}'=\omega_{2l}',\,\, x_{n-1}'=\omega_{n-1}'+\omega_{n}',\,\, x_{n}'=4\omega_{n}'
\end{split}\end{equation}}
for $1\leq i\leq (m-1)/2$, $1\leq k\leq (m-3)/2$, $1\leq j\leq (n-1)/2$, and $1\leq l\leq (n-3)/2$.

Let $\psi$ be a quadratic form on (\ref{xid}) with integer coefficients. As the group $S^{2}(T^*_{sc})^{W}$ is generated by the normalized Killing forms {\small \[\begin{split}q:=\sum_{i=1}^{m}\omega_{i}^{2}-2\sum_{i=1}^{m-2}\omega_{i}\omega_{i+1}-2\omega_{m-2}\omega_{m}=(\sum_{i=1}^{m}e_{i}^{2})/2,
\\
q':=\sum_{j=1}^{n}\omega_{j}'^{2}-2\sum_{j=1}^{n-2}\omega_{j}'\omega_{j+1}'-2\omega_{n-2}'\omega_{n}'=(\sum_{j=1}^{n}e_{j}'^{2})/2,&
\end{split}
\]}
from the equation $\phi=dq+d'q'$ we get 
\[\phi=(\tfrac{md+nd'}{8})x_{n}'^{2}+\psi,\]
where $\psi$ is a quadratic form on (\ref{xid}) with integer coefficients. Therefore,   
\begin{equation}\label{stgenDodd}
Q(G)=\{dq+d'q'\,|\,  md+nd'\equiv 0 \bmod 8\}.
\end{equation}

We show that $\Dec(G)=4\Z(q-q')+4\Z(q+q')$. First, by (\ref{chT1}) we see that all elements $c_{2}(\rho(2\omega_1))=-8q$, $c_{2}(\rho(2\omega_1'))=-8q'$, $c_{2}(\rho(\omega_1+\omega_{1}'))=-4nq-4mq'$ are contained in $\Dec(G)$, thus $4(q-q'), 4(q+q')\in \Dec(G)$. On the other hand, since $\Dec(G)$ is generated by $c_{2}(\rho(\lambda))$ for all $\lambda\in T^{*}$ and $W$ contains normal subgroups $(\Z/2\Z)^{m-1}$ and $(\Z/2\Z)^{n-1}$, we see that the coefficient at each $e_{i}$ in the expansion of $c_{2}(\rho(\lambda))$ is divisible by 2, thus, $\Dec(G)\subseteq 4\Z q\oplus 4\Z q'$ (note that by (\ref{stgenDodd}) $4q, 4q'\not\in \Dec(G)$). Hence, {\small \begin{equation*}
 \Inv(G)=
\Z/4\Z(\tfrac{n}{\gcd(m,n)}q-\tfrac{m}{\gcd(m,n)}q').
\end{equation*}}

Now we show that $\Sdec(G)=2\Z(\tfrac{n}{\gcd(m,n)}q-\tfrac{m}{\gcd(m,n)}q')$. First of all, as 
\begin{equation}\label{Dfundweight}
c_{2}(\bar\rho(\omega_{i}))=2m_{i}q \text{ and }  c_{2}(\bar\rho(\omega_{j}'))=2m_{j}'q'
\end{equation}
for all $1\leq i\leq m$ and $1\leq j\leq n$ and for some $m_{i}, m_{j}'\in \Z$, by the same argument as in the proof of Proposition \ref{propB} we obtain $\tfrac{n}{\gcd(m,n)}q-\tfrac{m}{\gcd(m,n)}q'\not\in \Sdec(G)$. On the other hand, consider an element 
\begin{equation*}
	z=\tfrac{n}{gcd(m,n)}\bar\rho(\omega_1)-\tfrac{m}{gcd(m,n)}\bar\rho(\omega_1').
\end{equation*}
Then, by (\ref{chT2}) we obtain $y:=e^{e_{1}}z\in \Z[T^{*}]\cap I_{sc}^{W}$. Therefore, as in the proof of Proposition \ref{prop:typec}, we have $c_2(y)=2(\tfrac{n}{\gcd(m,n)}q-\tfrac{m}{\gcd(m,n)}q')\in \Sdec(G)$. Hence, the result for $\Sinv(G)$ follows.

Now we assume that $\gmu\simeq \gmu_{2}$, so that $\gmu\subseteq \gmu_{4}^{2}$ if $m$ and $n$ are odd and $\gmu\subseteq \gmu_{2}^{4}$ otherwise. In both cases, the corresponding character group of $T$ is given by
{\small \[T^{*}=\big\{\sum_{i=1}^{m}a_{i}\omega_{i}+\sum_{j=1}^{n}a_{i}'\omega_{i}' \,\big|\,  a_{m-1}+a_{m}\equiv a_{n-1}'+a_{n}' \bmod 2 \big\}.\]}
By applying the same argument in the above $\gmu\simeq \gmu_{4}$ case, we obtain 
\begin{equation*}
Q(G)=\{dq+d'q'\,|\,  d+d'\equiv 0 \bmod 2\}.
\end{equation*}
Since $c_{2}(\rho(\omega_1))=-2q$ and $c_{2}(\rho(\omega_1'))=-2q'$, we have $2q, 2q'\in \Dec(G)$. Moreover, by (\ref{Dfundweight}), we get $q-q'\not\in \Sdec(G)$, thus $\Sdec(G)=\Dec(G)=2\Z q\oplus 2\Z q'$ and $\Inv(G)=\Z/2\Z(q-q')$.\end{proof}

We obtain the following generalization of the previous proposition. Together with Remark \ref{rem:typeD}, they determine the groups of indecomposable and semi-decomposable invariants for an arbitrary product of simply-connected simple groups of type ${\rm D}$ modulo the central subgroups $\gmu_{2}$.

\begin{corollary}\label{cor:typeD}
Let $G=(\prod_{i=1}^{m}\gSpin_{2n_{i}})/\gmu$, $n_{i}\geq 4$, $m\geq 1$, where either all $n_{i}$ are even or odd, and $\gmu$ is a diagonal subgroup. Then, 
	{\small\[\Inv(G)=\begin{cases}(\Z/4\Z)^{m-1} & \text{ if } \gmu\simeq \gmu_{4},\\
	(\Z/2\Z)^{m-1} & \text{ if } \gmu\simeq \gmu_{2},\end{cases} \text{ and  } \,\Sinv(G)=\begin{cases}(\Z/2\Z)^{m-1} & \text{ if } \gmu\simeq \gmu_{4},\\
	0 & \text{ if } \gmu\simeq \gmu_{2}.\end{cases}\]}
\end{corollary}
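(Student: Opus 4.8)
The plan is to reduce Corollary~\ref{cor:typeD} to the index-$2$ and index-$4$ two-factor case treated in Proposition~\ref{Ddiagonal}, using the product formulas \eqref{eq:product} together with the comparison \eqref{eq:inclus} and an induction on the number $m$ of simple factors. The base case $m=1$ is the simple split group $\gSpin_{2n}$, for which $\Inv(\gSpin_{2n})$ and $\Sinv(\gSpin_{2n})=0$ are already known (by \cite{Mer}, \cite{BR} and the fact that $\Dec=\Sdec$ for simple groups from \cite{MNZ}); the case $m=2$ is exactly Proposition~\ref{Ddiagonal}. So the real content is the inductive step.

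First I would describe $T^*$ explicitly. Writing $H=\prod_{i=1}^m\gSpin_{2n_i}$, the group $T^*_{sc}/T^*_{ad}=\bigoplus_i \Lambda_{w,i}/\Lambda_{r,i}$, and the diagonal $\gmu_k$ ($k=2$ or $4$) sits inside via the sum map to $\Z/k\Z$ as in~\ref{par:char}. Using the explicit generators of each $\Lambda_{w,i}/\Lambda_{r,i}$ (which depends on the parity of $n_i$, hence the hypothesis that all $n_i$ have the same parity, so that a single diagonal $\gmu_4$ or $\gmu_2$ makes sense as in Proposition~\ref{Ddiagonal}) one gets a single congruence cutting out $T^*$ inside $T^*_{sc}=\bigoplus_i\Z^{n_i}$. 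Then, exactly as in Proposition~\ref{Ddiagonal}, choosing a $\Z$-basis of $T^*$ adapted to this congruence and substituting into $\phi=\sum_i d_i q_i$ (where $q_i=\tfrac12\sum_j e_{i,j}^2$ is the normalized Killing form of the $i$-th factor), one reads off
\[
Q(G)=\Bigl\{\textstyle\sum_i d_i q_i \,\Bigm|\, \textstyle\sum_i n_i d_i\equiv 0\bmod 2k\Bigr\}.
\]
For $\Dec(G)$: each Weyl group factor contains a normal $(\Z/2\Z)^{n_i-1}$ of sign changes, so $c_2(\rho(\lambda))$ has all coefficients even in each block, giving $\Dec(G)\subseteq\bigoplus 2\Z q_i$; and $c_2(\rho(2\omega_1^{(i)}))=-8q_i$, $c_2(\rho(\omega_1^{(i)}+\omega_1^{(j)}))=-4n_jq_i-4n_iq_j$ (for $\gmu_4$), resp.\ $c_2(\rho(\omega_1^{(i)}))=-2q_i$ (for $\gmu_2$), produce enough elements that $\Dec(G)$ is the full sublattice of $\bigoplus 2\Z q_i$ (resp.\ $\bigoplus 4\Z q_i$ plus the ``diagonal'' relations) consistent with $Q(G)$. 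Comparing $Q(G)/\Dec(G)$ then yields $(\Z/4\Z)^{m-1}$ resp.\ $(\Z/2\Z)^{m-1}$; the drop of rank by one is the single congruence defining $T^*$.

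For the semi-decomposable part: the upper bound $\Sdec(G)\subseteq\bigoplus 2\Z q_i$ follows as in Proposition~\ref{propB} and Proposition~\ref{prop:typec}, namely writing any $x\in\Z[T^*]\cap I_{sc}^W$ as $\sum_i\sum_j (d_{ij}+\delta_{ij})\bar\rho(\omega_j^{(i)})$, using $c_2(I_{sc}^3)=0$ and $c_2(\bar\rho(\omega_j^{(i)}))=2m_{ij}q_i$ for an integer $m_{ij}$; since all these representing forms are even in each block, $c_2(x)\in\bigoplus 2\Z q_i=\Dec(G)$ when $\gmu\simeq\gmu_2$, giving $\Sinv(G)=0$. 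When $\gmu\simeq\gmu_4$, the same computation, combined with the homomorphism $\phi\colon\Z[T^*_{sc}]\to\Z[T^*_{sc}/T^*]$ as in Proposition~\ref{prop:typec}, pins down the residue of $c_2(x)$ modulo $\Dec(G)$ to a $(\Z/2\Z)^{m-1}$; and for each pair $(i,j)$ the element $y=e^{e_{1}^{(i)}}\bigl(\tfrac{n_j}{\gcd(n_i,n_j)}\bar\rho(\omega_1^{(i)})-\tfrac{n_i}{\gcd(n_i,n_j)}\bar\rho(\omega_1^{(j)})\bigr)\in\Z[T^*]\cap I_{sc}^W$ has $c_2(y)=2\bigl(\tfrac{n_j}{\gcd}q_i-\tfrac{n_i}{\gcd}q_j\bigr)$, realizing the generators; so $\Sinv(G)=(\Z/2\Z)^{m-1}$.

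The main obstacle I anticipate is the bookkeeping for $\gmu_4$ when $m\ge 3$: the single sum-congruence mod $4$ coupling all $m$ blocks makes the explicit adapted $\Z$-basis of $T^*$ more intricate than in the two-factor case, and one must check carefully that $\Dec(G)$ is exactly the predicted index-$2^{m-1}$ (resp.\ $4^{m-1}$) sublattice of $\bigoplus 2\Z q_i$ and not something smaller — i.e.\ that the classes $c_2(\rho(\omega_1^{(i)}+\omega_1^{(j)}))$ generate all the mixed relations. This is routine but must be done with care; everything else is a direct transcription of the arguments of Propositions~\ref{propB}, \ref{prop:typec} and \ref{Ddiagonal}, or follows formally from \eqref{eq:product} and \eqref{eq:inclus} by induction on $m$ once the two-factor case is in hand.
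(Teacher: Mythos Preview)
Your approach is essentially the same as the paper's: compute $Q(G)$ from the single congruence cutting out $T^*$, bound $\Dec(G)$ above via the sign-change subgroups and below via explicit orbits $c_2(\rho(\omega_1^{(i)}+\omega_1^{(j)}))$, and for $\Sdec(G)$ use the parity observation $c_2(\bar\rho(\omega_j^{(i)}))\in 2\Z q_i$ together with the explicit elements $y=e^{e_1^{(i)}}z$. Two small corrections: your unified formula $\sum n_id_i\equiv 0\bmod 2k$ is right for $\gmu_4$ (giving $\bmod\ 8$, as the paper has) but wrong for $\gmu_2$ --- there the congruence is simply $\sum d_i\equiv 0\bmod 2$, independent of the $n_i$; and for the $\gmu_4$ upper bound on $\Sdec$ you do not need the quotient homomorphism $\phi$ at all --- the bare fact $c_2(x)\in\bigoplus 2\Z q_i$ already forces $\Sdec(G)/\Dec(G)\subseteq(\Z/2\Z)^{m-1}$, and the $y$'s realize equality, exactly as in Proposition~\ref{Ddiagonal}.
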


\begin{proof}
By the same argument as in Proposition~\ref{Ddiagonal}, we obtain
\[Q(G)=\begin{cases}\{\sum_{i=1}^{m}d_{i}q_{i}\,|\, \sum n_{i}d_{i}\equiv 0 \bmod 8\} & \text{ if } \gmu\simeq \gmu_{4},\\  
\{\sum_{i=1}^{m}d_{i}q_{i}\,|\, \sum d_{i}\equiv 0 \bmod 2\} & \text{ if } \gmu\simeq \gmu_{2},\end{cases}
\]
where $q_{1},\ldots, q_{m}$ are the corresponding normalized Killing forms of $\gSpin_{2n_{i}}$. Similarly, 
\[\Dec(G)=\begin{cases} \bigoplus_{i=2}^{m}4\Z (q_{1}-q_{i})\oplus 4\Z (q_{1}+q_{2}) & \text{ if } \gmu\simeq \gmu_{4},\\  
\bigoplus_{i=1}^{m} 2\Z q_{i}& \text{ if } \gmu\simeq \gmu_{2},\end{cases}
\]
thus, the factor groups follow. Following Proposition \ref{Ddiagonal}, we see that 
\[\Sdec(G)/\Dec(G)=\bigoplus_{i=2}^{m}\Z/2\Z(\tfrac{n_{i}}{\gcd(n_{1},n_{i})}q_{1}-\tfrac{n_{1}}{\gcd(n_{1},n_{i})}q_{i})\]
if $\gmu\simeq \gmu_{4}$ and $q_{1}-q_{i}\not\in \Sdec(G)$ for all $2\leq i\leq m$ if $\gmu\simeq \gmu_{2}$, which completes the proof.
\end{proof}
\begin{remark}\label{rem:typeD}
(1) Let $G'=G\times (\prod_{i=1}^{m'}\gSpin_{2n_{i}'})$, where $G$ is the group from Corollary~\ref{cor:typeD}. Then, similar to the proof of Proposition \ref{Ddiagonal} one can show that $\Sinv(G')=\Sinv(G)$. Moreover, by \cite[Theorem 15.4]{GMS} $\Inv(G')=\Inv(G)\oplus (\Z/2\Z)^{\oplus m'}$.
	
(2) Let $G'=G\times (\prod_{i=1}^{m'}\gSpin_{2n_{i}'})$, where $G$ is either $\gSO_{2m}$ or $\gHSpin_{2m}$. If $G=\gSO_{2m}$, then by \cite[\S 15]{GMS} $\Dec(G)=\Dec(\gSpin_{2m})$, thus $\Dec(G')=\Sdec(G')$. Similarly, if $G=\gHSpin_{2m}$, then it follows from \cite[\S 5]{BR} and \cite[\S 3d]{MNZ} that $\Dec(G')=\Sdec(G')$. One can also easily compute the indecomposable groups.
\end{remark}

\section{The $\gPGO_8$-case}

In the present section we use the techniques developed in section~\ref{sec:gen} to give a direct proof of the main result of \cite[Appendix]{MNZ}.

In this section, $G=\gPGO_8$ that is an adjoint group of Dynkin type ${\rm D}_4$.
The weight lattice of type ${\rm D}_4$ can be constructed as follows. We first take a $\Q$-vector space with basis $e_1,\ldots, e_4$.
Then $\Lambda$ has the following $\Z$-basis consisting of fundamental weights:
{\small \[
\omega_1=e_1,\;  \omega_2=e_1+e_2,\; \omega_3=(e_1+e_2+e_3-e_4)/2,\; \omega_4=(e_1+e_2+e_3+e_4)/2.
\]}
So the coordinates of elements of $\Lambda$ are either all integers or half-integers.

The group $T^*$ consists of all points such that all coordinates are integers, and the sum of coordinates is divisible by $2$. 
We have $\Lambda/T^*=\Z/2\Z\oplus \Z/2\Z$ with elements $\bar 0=(0,0)$, $\bar \omega_1=(0,1)$, $\bar\omega_3=(1,0)$, $\bar\omega_4=(1,1)$.
The quotient map $\Lambda \to \Lambda/T^*$ induces a grading on $\Lambda$ and, hence, on $\Z[\Lambda]$.
We denote by $\Lambda^{(0,0)},\Lambda^{(0,1)},\Lambda^{(1,0)},\Lambda^{(1,1)}$ the respective homogeneous components.
Each polynomial $f\in \Z[\Lambda]$ can be split into a sum of its homogeneous components, which we will denote by $f^{(i,j)}$.
Denote the orbits in $\Z[\Lambda]$ by $\rho(\omega_1),\ldots,\rho(\omega_4)$ and the augmented orbits by $\rho_1,\ldots,\rho_4$ as in section~\ref{sec:gen}.

\begin{lemma}\label{d4lemma}
Let $f_1,\ldots,f_4\in \Z[\Lambda]$ be such that $f_1\rho_1+\ldots+f_4\rho_4\in \Z[T^*]$.
Then, for each element $i\in \Lambda/T^*$ except for $i=(0,1)$ the sum of coefficients of $f_1^{(i)}$ is even.
\end{lemma}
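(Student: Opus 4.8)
The plan is to split the hypothesis into homogeneous components, reduce modulo $2$, and read off the conclusion from triviality of syzygies, as in Lemmas~\ref{lem:genflat} and~\ref{finaldizisibilitylemma}. First I would record the degrees of the augmented orbits for the $\Z/2\Z\oplus\Z/2\Z$-grading: $\deg\rho_1=(0,1)$, $\deg\rho_2=(0,0)$, $\deg\rho_3=(1,0)$, $\deg\rho_4=(1,1)$. Since $f_1\rho_1+\dots+f_4\rho_4$ lies in $\Z[T^*]=\Z[\Lambda^{(0,0)}]$, its homogeneous components of degrees $(0,1)$, $(1,0)$ and $(1,1)$ all vanish; writing these out (each $\rho_j$ being homogeneous) gives in $\Z[\Lambda]$
\begin{align*}
f_1^{(0,0)}\rho_1+f_2^{(0,1)}\rho_2+f_3^{(1,1)}\rho_3+f_4^{(1,0)}\rho_4&=0,\\
f_1^{(1,1)}\rho_1+f_2^{(1,0)}\rho_2+f_3^{(0,0)}\rho_3+f_4^{(0,1)}\rho_4&=0,\\
f_1^{(1,0)}\rho_1+f_2^{(1,1)}\rho_2+f_3^{(0,1)}\rho_3+f_4^{(0,0)}\rho_4&=0.
\end{align*}
Thus the components of $f_1$ attached to the three degrees $i\ne(0,1)$, namely $f_1^{(0,0)}$, $f_1^{(1,1)}$ and $f_1^{(1,0)}$, occur here each as the first entry of a syzygy of $(\rho_1,\rho_2,\rho_3,\rho_4)$.

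Next I would reduce modulo $2$. Since $\rho_j=\rho(\omega_j)-|W(\omega_j)|$ has coefficient sum $0$ for every $j$, the augmentation $\varepsilon\colon\Z/2\Z[\Lambda]\to\Z/2\Z$, $e^\lambda\mapsto 1$, kills every $\bar\rho_j$. Reducing the three identities above modulo $2$ exhibits $(\overline{f_1^{(0,0)}},\overline{f_2^{(0,1)}},\overline{f_3^{(1,1)}},\overline{f_4^{(1,0)}})$, and the two analogous quadruples, as syzygies of $(\bar\rho_1,\bar\rho_2,\bar\rho_3,\bar\rho_4)$ over $\Z/2\Z[\Lambda]$. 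Granting that all these syzygies are trivial, each is a $\Z/2\Z[\Lambda]$-combination of the generators $S_{ij}$, and so its first entry is a $\Z/2\Z[\Lambda]$-linear combination of $\bar\rho_2,\bar\rho_3,\bar\rho_4$; applying $\varepsilon$ and using $\varepsilon(\bar\rho_j)=0$ yields $\varepsilon(\overline{f_1^{(0,0)}})=\varepsilon(\overline{f_1^{(1,1)}})=\varepsilon(\overline{f_1^{(1,0)}})=0$, which is exactly the claim that $f_1^{(i)}$ has even coefficient sum for every $i\ne(0,1)$.

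The main obstacle is the triviality of the mod-$2$ syzygies of $(\bar\rho_1,\dots,\bar\rho_4)$, which here plays the role of Lemma~\ref{lem:genflat} in Section~3. I would establish it either by checking the generalized flatness condition of Definition~\ref{dfn:genflatcond} for $(\bar\rho_1,\dots,\bar\rho_4)$ with respect to a suitable $\Z$-basis of the $D_4$ weight lattice, in the spirit of Section~4 (exhibiting $A\in GL_4(\Z/2\Z[\Lambda])$ that puts the quadruple into flat form, using Newton-type relations between the orbit sums $\rho(\omega_i)$ and the two half-spin orbit sums), or, more quickly, by noting that $\Z/2\Z[\Lambda]/(\bar\rho_1,\dots,\bar\rho_4)\cong\Z/2\Z\otimes_\Z K_0(\gSpin_8/B)$ is finite over $\Z/2\Z$, so that $(\bar\rho_1,\dots,\bar\rho_4)$ is a regular sequence in the Cohen--Macaulay ring $\Z/2\Z[\Lambda]$ and hence all its (Koszul) syzygies are trivial. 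The remainder is the elementary bookkeeping recorded above.
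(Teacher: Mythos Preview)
Your three displayed identities over $\Z$ are not valid as written: $\rho_j=\rho(\omega_j)-|W(\omega_j)|$ is \emph{not} homogeneous for the $\Lambda/T^*$-grading when $j\neq 2$, because the constant term $-|W(\omega_j)|$ lives in degree $(0,0)$ while $\rho(\omega_j)$ lives in degree $\bar\omega_j$. So one cannot read off graded components of $\sum_j f_j\rho_j$ over $\Z$ using ``each $\rho_j$ being homogeneous''. The repair is immediate and you already have the ingredient: reduce modulo $2$ \emph{before} splitting into graded pieces. For $D_4$ one has $|W(\omega_j)|\in\{8,24\}$, so in $(\Z/2\Z)[\Lambda]$ each $\bar\rho_j=\overline{\rho(\omega_j)}$ is genuinely homogeneous of degree $\bar\omega_j$, the three graded identities hold there, and your syzygy/augmentation argument then goes through verbatim. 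With this reordering the proof is correct.

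Once fixed, your route is genuinely different from the paper's. The paper makes no syzygy statement for $D_4$: it introduces an auxiliary sublattice $\Lambda'\subset T^*$ with $\Lambda/\Lambda'\cong\Z/2\Z\oplus\Z/4\Z$, pushes the relation into $(\Z/4\Z)[\Lambda/\Lambda']$ where $\bar\rho_2=\bar\rho_3=\bar\rho_4=0$ and only $\bar\rho_1=2\overline{e^{e_1}}(1+\overline{e^{e_1+e_2}})$ survives, and reads off the parity by hand. Your argument is more conceptual and type-independent: finiteness of $K_0(\gSpin_8/B)=\Z[\Lambda]/(\rho_1,\dots,\rho_4)$ forces the $\bar\rho_j$ to be a regular sequence in the four-dimensional Cohen--Macaulay ring $(\Z/2\Z)[\Lambda]$, so every syzygy is Koszul. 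A bonus is that the same three syzygies, looked at in their third and fourth entries, already give Lemma~\ref{lem:othereven} for $f_3$ and $f_4$ without invoking the triality automorphisms the paper uses. The cost is that you must import (and should cite) the standard fact that $K_0(G^{sc}/B)$ is $\Z$-free of rank $|W|$; the ``generalized flatness'' alternative you mention is not established for type ${\rm D}$ in the paper and would need separate work.
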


\begin{proof}
Consider a subgroup 
\[
\Lambda'=\{x_1e_1+x_2e_2+x_3e_3+x_4e_4\in T^*\mid x_2+x_3+x_4\text{ and }x_1 \text{ is even}\}.
\]
We have $\Lambda/\Lambda'=\Z/2\Z\oplus \Z/4\Z$ with a generator $\bar \omega_1$ of order $2$ and $\bar \omega_4$ of order $4$.

Set $R=\Z/4\Z$. Consider a natural map $\Z[\Lambda]\to R[\Lambda/\Lambda']$ given by $f\mapsto \bar{f}$.
Since $\Lambda'\subset T^*$, $R[\Lambda/\Lambda']$ is also a $\Lambda/T^*$-graded algebra, and this map preserves the grading.
By definition, the sum of coefficients of $f$ modulo $4$ is 
the sum of coefficients of $\bar{f}$.
So, it is sufficient to prove that 
for each element $i\in \Lambda/T^*$ except for $i=(0,1)$, the sum of coefficients of 
$\bar{f}_1^{(i)}=\bar{f}_1^{(i)}$ is even.

Since $\bar\rho_1=2\overline{e^{e_1}}+2\overline{e^{e_2}}$, $\bar \rho_2=\bar\rho_3=\bar \rho_4=0$ in $R[\Lambda/\Lambda']$ and $\bar{f}_1\bar{\rho}_1+\ldots+\bar{f}_4\bar{\rho}_4 \in R[T^*/\Lambda']$, we obtain that $\bar{f}_1\bar{\rho}_1\in R[T^*/\Lambda']$.
Since $\bar\rho_1 \in \Z[\Lambda^{(0,1)}]$, 
for each $i\in \Lambda/T^*$, $i\ne 0$, we have 
\[
0=(\bar{f}_1\bar{\rho}_1)^{(i)}=\bar{f}_1^{(i-(0,1))}\bar\rho_1=
2\overline{e^{e_1}}(1+\overline{e^{e_1+e_2}})\bar{f}_1^{(i-(0,1))}.\]
Therefore, for each $j\in \Lambda/T^*$, $j\ne (0,1)$, all coefficients 
of $(1+\overline{e^{e_1+e_2}})\bar{f}_1^{(j)}$ are divisible by $2$.

Observe that the classes of $0$ and of $\omega_2$ in $\Lambda/\Lambda'$
are all elements of $\Lambda/\Lambda'$ that belong to $T^*/\Lambda'$.
So, if $\lambda\in \Lambda/\Lambda'$, and $f\in R[\Lambda/\Lambda']$, 
then the coefficient of $(1+\overline{e^{e_1+e_2}})f$ at $e^{\lambda}$ is
the sum of coefficients of $f$ in front of $e^{\lambda'}$ for all $\lambda'\in \Lambda/\Lambda'$ 
such that $\lambda= \lambda' \bmod T^*$ (there are exactly two such $\lambda'$, one of them is 
$\lambda$, the other is $\lambda+e_1+e_2$). If $f$ is a homogeneous polynomial of 
degree $j\in \Lambda/T^*$, and $\lambda\in \Lambda/\Lambda'$ is mapped to $j$ by the natural projection 
$\Lambda/\Lambda'\to \Lambda/T^*$, then the coefficient of 
$(1+\overline{e^{e_1+e_2}})f$ in front of $e^{\lambda}$ is the sum of all coefficients of $f$.
Therefore, the sum of all coefficients of $\bar{f}_1^{(j)}$ 
is divisible by 2 for each $j\in \Lambda/T^*$, $j\ne (0,1)$.
\end{proof}

\begin{lemma}\label{lem:othereven}
Let $f_1,\ldots,f_4\in \Z[\Lambda]$ be such that $f_1\rho_1+\ldots+f_4\rho_4\in \Z[T^*]$.
Then, 

(1) for each element $i\in \Lambda/T^*$ except for $i=(1,0)$ the sum of coefficients of $f_3^{(i)}$ is even;

(2) for each element $i\in \Lambda/T^*$ except for $i=(1,1)$ the sum of coefficients of $f_4^{(i)}$ is even.
\end{lemma}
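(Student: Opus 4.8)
The plan is to deduce Lemma~\ref{lem:othereven} from Lemma~\ref{d4lemma} using the triality symmetry of the root system of type ${\rm D}_4$. Let $\theta$ be a triality automorphism of this root system, chosen so that on fundamental weights it acts by the $3$-cycle $\theta(\omega_1)=\omega_3$, $\theta(\omega_3)=\omega_4$, $\theta(\omega_4)=\omega_1$ and fixes $\omega_2$. Then $\theta$ extends to a $\Z$-linear automorphism of $\Lambda$ which preserves the root lattice $T^*$ and normalizes the Weyl group $W$; hence it carries the orbit $W(\omega_i)$ onto $W(\theta\omega_i)$, so the induced ring automorphism of $\Z[\Lambda]$ (still denoted $\theta$) satisfies $\theta(\rho_i)=\rho_{\sigma(i)}$ with $\sigma=(1\,3\,4)$ and $\sigma(2)=2$. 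Since $\theta$ only permutes the monomials $e^\lambda$, it preserves the sum of coefficients of any Laurent polynomial, and since $\theta(T^*)=T^*$ it sends the homogeneous component of degree $i\in\Lambda/T^*$ to the component of degree $\bar\theta(i)$, where $\bar\theta$ is the corresponding permutation of $\{\bar\omega_1,\bar\omega_3,\bar\omega_4\}=\{(0,1),(1,0),(1,1)\}$, namely the $3$-cycle $(0,1)\mapsto(1,0)\mapsto(1,1)\mapsto(0,1)$.

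For part~(1), first apply $\theta^{-1}$ to the relation $f_1\rho_1+\cdots+f_4\rho_4\in\Z[T^*]$. Since $\theta^{-1}$ preserves $\Z[T^*]$ and realizes $\sigma^{-1}$, this yields
\[
\theta^{-1}(f_3)\,\rho_1+\theta^{-1}(f_2)\,\rho_2+\theta^{-1}(f_4)\,\rho_3+\theta^{-1}(f_1)\,\rho_4\in\Z[T^*].
\]
Apply Lemma~\ref{d4lemma} with $\theta^{-1}(f_3)$ in the role of $f_1$: the sum of coefficients of $\theta^{-1}(f_3)^{(i)}$ is even for every $i\ne(0,1)$. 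But $\theta^{-1}(f_3)^{(i)}=\theta^{-1}\bigl(f_3^{(\bar\theta(i))}\bigr)$, which has the same coefficient sum as $f_3^{(\bar\theta(i))}$; and as $i$ ranges over $\Lambda/T^*\setminus\{(0,1)\}$, the degree $\bar\theta(i)$ ranges over $\Lambda/T^*\setminus\{(1,0)\}$. This is exactly~(1). Part~(2) follows identically after applying $\theta$ instead: one gets $\theta(f_4)\rho_1+\theta(f_2)\rho_2+\theta(f_1)\rho_3+\theta(f_3)\rho_4\in\Z[T^*]$, and Lemma~\ref{d4lemma} applied to $\theta(f_4)$ shows that the coefficient sum of $f_4^{(j)}$ is even for all $j\ne\bar\theta^{-1}(0,1)=(1,1)$.

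I expect no real mathematical obstacle: the substantive content lies entirely in Lemma~\ref{d4lemma}, and parts~(1)--(2) are just that lemma transported by the order-$3$ outer automorphism. The only point demanding care is the combinatorial bookkeeping --- verifying that the degree excluded in Lemma~\ref{d4lemma} for $f_1$, namely $(0,1)=\bar\omega_1$, is carried by $\bar\theta$ (resp.\ $\bar\theta^{-1}$) precisely to $(1,0)=\bar\omega_3$ (resp.\ $(1,1)=\bar\omega_4$), matching the exceptional degrees prescribed for $f_3$ and $f_4$. If one would rather not invoke triality as a black box, an equivalent option is to rerun the proof of Lemma~\ref{d4lemma} with the auxiliary subgroup $\Lambda'$ there replaced by $\theta(\Lambda')$ and by $\theta^2(\Lambda')$; the identities $\bar\rho_i=0$ for $i\neq3$ (resp.\ $i\neq4$) and the explicit shape of $\bar\rho_3$ (resp.\ $\bar\rho_4$) then hold by transport of structure, and the rest of the argument is unchanged.
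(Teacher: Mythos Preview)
Your proof is correct and follows essentially the same approach as the paper: both deduce the lemma from Lemma~\ref{d4lemma} by transporting it along outer automorphisms of the ${\rm D}_4$ diagram. The only cosmetic difference is that the paper uses the two transpositions $\omega_1\leftrightarrow\omega_3$ and $\omega_1\leftrightarrow\omega_4$ in the $S_3$ symmetry group, whereas you use the triality $3$-cycle $\theta$ and its inverse; the bookkeeping checks out either way.
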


\begin{proof}
(1) Consider an automorphism $\psi$ of $\Lambda$ (induced by an outer automorphism of $\gPGO_8$) that interchanges 
$\omega_1$ and $\omega_3$ and keeps $\omega_2$ and $\omega_4$ invariant. It preserves $T^*$. So, 
it also acts on $\Lambda/T^*=\Z/2\oplus \Z/2$ by interchanging $(0,1)$ and $(1,0)$ (and keeping $(0,0)$ and $(1,1)$).

By definition $\psi$ maps the graded components of a polynomial $f\in \Z[T^*]$ to 
graded components of $\psi(f)$, more precisely, $\psi(f^{(i)})=\psi(f)^{\psi(i)}$. In particular, 
$\psi(f)^{(0,1)}=\psi(f^{(1,0)})$.

Since $\psi$ interchanges $\rho_1$ and $\rho_3$, and keeps $\rho_2$ and $\rho_4$ unchanged,
\[
\psi(f_1\rho_1+\ldots+f_4\rho_4)=\psi(f_1)\rho_3+\psi(f_2)\rho_2+\psi(f_3)\rho_1+\psi(f_4)\rho_4.\]
Finally, observe that the sum of coefficients of $\psi(f)$ is the same as the sum of coefficients of $f$. 
We then apply Lemma~\ref{d4lemma}.

(2) The proof is completely similar to the proof of the previous case, the only difference is that now $\psi$ should interchange $\omega_1$
and $\omega_4$ and keep $\omega_2$ and $\omega_3$ unchanged.
\end{proof}

\begin{proposition}\label{prop:evensum}
Let $f_1,\ldots,f_4\in \Z[\Lambda]$ be such that $f_1\rho_1+\ldots+f_4\rho_4\in \Z[T^*]$.
Then the sum of all coefficients of $f_i$ is even for $i=1,3,4$.
\end{proposition}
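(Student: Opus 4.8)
The plan is to derive the proposition from Lemmas~\ref{d4lemma} and~\ref{lem:othereven} by comparing homogeneous components of the relation $f_1\rho_1+\ldots+f_4\rho_4\in\Z[T^*]$. Let $\varepsilon\colon\Z[\Lambda]\to\Z$ denote the ring homomorphism sending every $e^{\lambda}$ to $1$, so that $\varepsilon(f)$ is the sum of all coefficients of $f$; in particular $\varepsilon(\rho(\omega_i))=|W(\omega_i)|$ and hence $\varepsilon(\rho_i)=0$ for each $i$. Since $\varepsilon(f_i)=\sum_{j\in\Lambda/T^*}\varepsilon(f_i^{(j)})$, and Lemmas~\ref{d4lemma} and~\ref{lem:othereven} already give that $\varepsilon(f_1^{(j)})$, $\varepsilon(f_3^{(j)})$ and $\varepsilon(f_4^{(j)})$ are even for every $j$ other than $(0,1)$, $(1,0)$ and $(1,1)$ respectively, it remains only to show that $\varepsilon(f_1^{(0,1)})$, $\varepsilon(f_3^{(1,0)})$ and $\varepsilon(f_4^{(1,1)})$ are even.

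Next I would record which $\Lambda/T^*$-degrees occur in each $\rho_i$. As $\omega_2\in T^*$, the element $\rho_2$ is homogeneous of degree $(0,0)$; for $i\in\{1,3,4\}$ the orbit sum $\rho(\omega_i)$ is homogeneous of degree $\bar\omega_i$ (equal to $(0,1)$, $(1,0)$, $(1,1)$ respectively) while $|W(\omega_i)|=8$, so $\rho_i=\rho(\omega_i)-8$ has nonzero homogeneous parts only in degrees $\bar\omega_i$ and $(0,0)$. Taking the degree-$(0,1)$ homogeneous part of $f_1\rho_1+\ldots+f_4\rho_4$ --- which is $0$ because the sum lies in $\Z[T^*]$ --- and keeping track of degrees in $\Z/2\Z\oplus\Z/2\Z$ gives
{\small \[
0=f_1^{(0,0)}\rho(\omega_1)-8f_1^{(0,1)}+f_2^{(0,1)}\rho_2+f_3^{(1,1)}\rho(\omega_3)-8f_3^{(0,1)}+f_4^{(1,0)}\rho(\omega_4)-8f_4^{(0,1)}.
\]}
Applying $\varepsilon$ (under which $\rho(\omega_1),\rho(\omega_3),\rho(\omega_4)\mapsto 8$ and $\rho_2\mapsto 0$) and dividing the resulting equation of integers by $8$ yields
{\small \[
\varepsilon(f_1^{(0,1)})=\varepsilon(f_1^{(0,0)})+\varepsilon(f_3^{(1,1)})-\varepsilon(f_3^{(0,1)})+\varepsilon(f_4^{(1,0)})-\varepsilon(f_4^{(0,1)}).
\]}
Crucially, none of the five components on the right-hand side is the exceptional one for its index: $(0,0)\ne(0,1)$, while $(1,1),(0,1)\ne(1,0)$ and $(1,0),(0,1)\ne(1,1)$. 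Hence every term on the right is even by Lemmas~\ref{d4lemma} and~\ref{lem:othereven}, so $\varepsilon(f_1^{(0,1)})$ is even, and therefore $\varepsilon(f_1)$ is even.

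For $i=3$ and $i=4$ I would run the same argument on the degree-$(1,0)$ and degree-$(1,1)$ parts of the relation; alternatively, one can apply the automorphism of $\Z[\Lambda]$ from the proof of Lemma~\ref{lem:othereven} that swaps $\omega_1$ with $\omega_3$ (respectively with $\omega_4$), which preserves $T^*$, permutes the $\rho_i$ accordingly, and commutes with $\varepsilon$, thereby transporting the case $i=1$ to the two remaining cases. The main thing to get right is the triality bookkeeping of the degrees in $\Z/2\Z\oplus\Z/2\Z$ --- in particular that the constant terms $-8$ of $\rho_1,\rho_3,\rho_4$ all sit in degree $(0,0)$ --- together with the observation that the uniform orbit size $|W(\omega_i)|=8$ for the three minuscule fundamental weights is exactly what lets one divide by $8$ and obtain an equality rather than a congruence; everything after that is routine.
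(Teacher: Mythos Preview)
Your argument is correct. Both your proof and the paper's rest on the same three ingredients --- Lemmas~\ref{d4lemma} and~\ref{lem:othereven}, the $\Lambda/T^*$-grading, and the uniform orbit size $|W(\omega_i)|=8$ for $i=1,3,4$ --- but you package them differently. The paper passes to the quotient ring $\Z/16\Z[\Lambda/T^*]$, where each graded piece of $f_i$ collapses to a single coefficient $c_i^{(j,k)}$ and each $\bar\rho_i$ becomes $8(\overline{e^{\omega_i}}-1)$; since an even $c_i^{(j,k)}$ multiplied by $8$ vanishes mod~$16$, only the three exceptional coefficients $c_1^{(0,1)},c_3^{(1,0)},c_4^{(1,1)}$ survive, and the requirement that the result be a constant forces each of them to be even as well. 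This handles all three indices at once. Your version stays over $\Z$: you isolate a single nontrivial graded component of the relation, apply the augmentation $\varepsilon$, and divide by $8$ to obtain an exact integer identity expressing the exceptional coefficient sum as an integer combination of non-exceptional ones. This is slightly more elementary (no auxiliary $\Z/16\Z$ ring) and makes the role of the orbit size $8$ very explicit, at the cost of treating the indices one at a time and then invoking triality.
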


\begin{proof}
Set
$\omega_{(0,1)}=\omega_1$,
$\omega_{(1,0)}=\omega_3$, 
$\omega_{(1,1)}=\omega_4$,
and
$\omega_{(0,0)}=\omega_2$.
Then the class of $\omega_{(i,j)}$ in $\Lambda/T^*$ is precisely $(i,j)$.

Set $R=\Z/16\Z$ and consider the natural map $\Z[\Lambda]\to R[\Lambda/T^*]$ given by $f\mapsto \bar{f}$.
Since $f_1\rho_1+\ldots+f_4\rho_4\in \Z[T^*]$, 
$\bar{\rho}_i=8\overline{e^{\omega_i}}-8$, $i=1,3,4$ and $\bar{\rho}_2=24\overline{e^{\omega_2}}-24=0$,
\[\bar f_1\bar \rho_1+\bar f_3\bar \rho_3+\bar f_4\bar \rho_4\text{ is a constant in }R[\Lambda/T^*].\]

For $i=1,3,4$ and $(j,k)\in \Lambda/T^*$, denote by $c_i^{(j,k)}$ the sum of coefficients of $f_i^{(j,k)}$ modulo $16$.
Then $\bar{f}_i^{(j,k)}=c_i^{(j,k)}\overline{e^{\omega_{(j,k)}}}$.
By Lemma \ref{d4lemma} and~\ref{lem:othereven}, all numbers $c_i^{(j,k)}$ are even, except for, possibly, 
$c_1^{(0,1)}$,
$c_3^{(1,0)}$, 
and 
$c_4^{(1,1)}$.
Observe that if $c_i^{(j,k)}$ is even, then 
$c_i^{(j,k)}\overline{e^{\omega_{(j,k)}}}\bar{\rho}_i=0$ in $R[\Lambda/T^*]$ since $\bar{\rho}_i$
is divisible by $8$.
Therefore, both expressions
\[c_1^{(0,1)}\overline{e^{\omega_1}}\bar{\rho}_1+
c_3^{(1,0)}\overline{e^{\omega_3}}\bar{\rho}_3+
c_4^{(1,1)}\overline{e^{\omega_4}}\bar{\rho}_4\quad \text{ and}\]
\[8c_1^{(0,1)}\overline{e^{\omega_1}}+
8c_3^{(1,0)}\overline{e^{\omega_3}}+
8c_4^{(1,1)}\overline{e^{\omega_4}}+
8(c_1^{(0,1)}+c_3^{(1,0)}+c_4^{(1,1)})\]
are constants in $R[\Lambda/T^*]$.

So, the coefficients 
$c_1^{(0,1)}$,
$c_3^{(1,0)}$, 
and 
$c_4^{(1,1)}$
are even which means 
that 
for all $i=1,3,4$ and $(j,k)\in \Lambda/T^*$, 
$c_i^{(j,k)}$ is even. But then the sum of all coefficients of $f_i$ is even for $i=1,3,4$.
\end{proof}

We now give a direct proof of the result obtained in \cite[Appendix]{MNZ} using a computer algorithm
\begin{corollary}\label{pgo8}
If $G=\gPGO_8$, then any semi-decomposable invariant of $G$ is decomposable.
\end{corollary}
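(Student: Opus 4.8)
The plan is to combine Proposition~\ref{prop:evensum} with the description of $\Sdec$ coming from the characteristic map. Recall from Section~5 that $\Sdec(G)=c_2(\Z[T^*]\cap I_{sc}^W)$, that $\Dec(G)=c_2(\Z[T^*_{sc}]^W)$, and that $Q(G)=S^2(T^*)^W$; moreover by the Chevalley theorem every element of $I_{sc}^W$ is of the form $f_1\rho_1+\ldots+f_4\rho_4$. So an element $x\in \Z[T^*]\cap I_{sc}^W$ can be written $x=\sum_{i=1}^4 f_i\rho_i$ with $f_i\in\Z[\Lambda]$. First I would expand each $f_i$ in the augmentation filtration: write $f_i=n_i+\delta_i$ where $n_i\in\Z$ is the sum of all coefficients of $f_i$ and $\delta_i\in I_{sc}$. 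Then, since $c_2$ kills $I_{sc}^3$ and $\rho_i\in I_{sc}$, the contribution of $\delta_i\rho_i$ to $c_2(x)$ only involves the degree-$\le 2$ part, and in fact $c_2(\delta_i\rho_i)$ depends only on $c_2$ of something in $I_{sc}^2$; the point is that $c_2(x)=\sum_{i=1}^4 n_i\,c_2(\rho_i) + (\text{something already in }\Dec(G))$, because each $c_2(\delta_i\rho_i)$ lies in the image $c_2(I_{sc}^2)=c_2(\Z[T^*_{sc}]^W)=\Dec(G)$ (as $I_{sc}^2$ is spanned over $\Z$ by products $\bar\rho(\mu)\bar\rho(\nu)$ up to higher filtration, and $c_2$ of such a product is a decomposable form). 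So modulo $\Dec(G)$ we have $c_2(x)\equiv n_1 c_2(\rho_1)+n_2 c_2(\rho_2)+n_3 c_2(\rho_3)+n_4 c_2(\rho_4)$.

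Next I would invoke Proposition~\ref{prop:evensum}: it tells us that $n_1$, $n_3$, $n_4$ are all even. Therefore $n_1 c_2(\rho_1)$, $n_3 c_2(\rho_3)$, $n_4 c_2(\rho_4)$ each lie in $2\,c_2(\Z[T^*_{sc}]^W)=2\Dec(G)\subseteq\Dec(G)$, since $c_2(\rho_i)=c_2(\bar\rho(\omega_i))\in\Dec(G)$. The remaining term is $n_2 c_2(\rho_2)$. Here I would use that $\omega_2$ has degree $(0,0)$ in $\Lambda/T^*$, i.e. $\omega_2\in T^*$ and in fact the whole orbit $W(\omega_2)$ lies in $T^*$, so $\rho_2=\bar\rho(\omega_2)\in\Z[T^*]^W$. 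Hence $c_2(\rho_2)\in c_2(\Z[T^*]^W)=\Dec(G)$ directly (this is the image identified in Section~5 as the decomposable invariants). Consequently $n_2 c_2(\rho_2)\in\Dec(G)$ as well, and therefore $c_2(x)\in\Dec(G)$. Since $x\in\Z[T^*]\cap I_{sc}^W$ was arbitrary, $\Sdec(G)\subseteq\Dec(G)$, and the reverse inclusion is \eqref{eq:inclus}, so $\Sdec(G)=\Dec(G)$, which is exactly the assertion that every semi-decomposable invariant of $\gPGO_8$ is decomposable.

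The step I expect to be the main obstacle is the bookkeeping that reduces $c_2(x)$ to $\sum n_i c_2(\rho_i)$ modulo $\Dec(G)$: one must be careful that the cross terms $c_2(\delta_i\rho_i)$ genuinely land in $\Dec(G)$ and not merely in $Q(G)$. The cleanest way is to argue filtration-wise, writing $f_i\rho_i = n_i\rho_i + \delta_i\rho_i$ with $\delta_i\rho_i\in I_{sc}^2$, and then noting that $c_2$ restricted to $I_{sc}^2$ factors through the $W$-invariants in the associated graded (because $x$, hence each homogeneous piece after symmetrization, is $W$-invariant up to $I_{sc}^3$) — alternatively one can quote directly from \cite{Mer} that $c_2(I_{sc}^2\cap(\text{invariants}))=\Dec(G)$ and handle the non-invariant part by the same orbit-averaging trick used there. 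Everything else — the evenness of $n_1,n_3,n_4$ from Proposition~\ref{prop:evensum}, and the membership $\rho_2\in\Z[T^*]^W$ from the grading computation at the start of Section~11 — is immediate from the results already established.

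\begin{proof}
Recall that $\Dec(G)=c_2(\Z[T^*]^W)$ and $\Sdec(G)=c_2(\Z[T^*]\cap I_{sc}^W)$, and that by \eqref{eq:inclus} we have $\Dec(G)\subseteq\Sdec(G)$; we must prove the reverse inclusion. Let $x\in\Z[T^*]\cap I_{sc}^W$. By the Chevalley theorem we may write $x=f_1\rho_1+f_2\rho_2+f_3\rho_3+f_4\rho_4$ with $f_i\in\Z[\Lambda]$. For each $i$ let $n_i\in\Z$ be the sum of all coefficients of $f_i$, so $f_i=n_i+\delta_i$ with $\delta_i\in I_{sc}$. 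Then $\delta_i\rho_i\in I_{sc}^2$, and since $c_2$ kills $I_{sc}^3$ and, by \cite[\S3c]{Mer}, $c_2(I_{sc}^2)=\Dec(G)$, we obtain
\[
c_2(x)\equiv n_1 c_2(\rho_1)+n_2 c_2(\rho_2)+n_3 c_2(\rho_3)+n_4 c_2(\rho_4)\pmod{\Dec(G)}.
\]
Now $c_2(\rho_i)=c_2(\bar\rho(\omega_i))\in\Dec(G)$ for every $i$. By Proposition~\ref{prop:evensum}, the integers $n_1$, $n_3$, $n_4$ are even, hence $n_i c_2(\rho_i)\in\Dec(G)$ for $i=1,3,4$. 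Finally, since the class of $\omega_2$ in $\Lambda/T^*$ is $(0,0)$, the whole orbit $W(\omega_2)$ lies in $T^*$, so $\rho_2=\bar\rho(\omega_2)\in\Z[T^*]^W$ and $c_2(\rho_2)\in c_2(\Z[T^*]^W)=\Dec(G)$; thus $n_2 c_2(\rho_2)\in\Dec(G)$ as well. Therefore $c_2(x)\in\Dec(G)$. As $x$ was arbitrary, $\Sdec(G)\subseteq\Dec(G)$, whence $\Sdec(G)=\Dec(G)$; that is, every semi-decomposable invariant of $\gPGO_8$ is decomposable.
\end{proof}
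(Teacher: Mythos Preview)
Your overall strategy matches the paper's, but two of your justifications are incorrect and break the argument.

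First, the claim ``$c_2(I_{sc}^2)=\Dec(G)$'' is false. The image $c_2(I_{sc}^2)$ is all of $S^2(T_{sc}^*)$, not the small subgroup $\Dec(G)\subseteq S^2(T^*)^W$. What actually makes the cross terms vanish is that $c_2(\delta_i\rho_i)=c_1(\delta_i)\cdot c_1(\rho_i)$ and $c_1(\rho_i)=\sum_{\lambda\in W(\omega_i)}\lambda=0$ (a $W$-invariant element of $T_{sc}^*$ is zero for a semisimple group). So in fact $c_2(x)=\sum_i n_i c_2(\rho_i)$ on the nose, not merely modulo $\Dec(G)$; this is what the paper means when it cites $c_2(I_{sc}^3)=0$ together with \cite[\S3c]{MNZ}.

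Second, and more seriously, the assertion ``$c_2(\rho_i)\in\Dec(G)$ for every $i$'' is false for $i=1,3,4$: here $\omega_i\notin T^*$, and explicitly $c_2(\bar\rho_i)=2q$ while $\Dec(\gPGO_8)=4\Z q$ (see \cite[\S3d]{MNZ}). If $c_2(\rho_1)$ already lay in $\Dec(G)$ there would be no need for the evenness of $n_1$ from Proposition~\ref{prop:evensum}. The correct step is the paper's: use the explicit values $c_2(\bar\rho_i)=2q$ for $i=1,3,4$ and $c_2(\bar\rho_2)=12q$, so that $c_2(x)=2(n_1+n_3+n_4)q+12n_2q$; then the evenness of $n_1,n_3,n_4$ forces $c_2(x)\in 4\Z q=\Dec(G)$. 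Your handling of the $i=2$ term via $\omega_2\in T^*$ is fine, but the argument for $i=1,3,4$ needs the numerical input, not the (false) membership $c_2(\rho_i)\in\Dec(G)$.
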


\begin{proof}
Let $x\in \Z[T^*]\cap I^W_{sc}$. Similar to \cite[\S3c]{MNZ} we write
\[
x=\sum_{i=1}^4 (d_i+\delta_i)\bar\rho(\omega_i)\text{ for some }d_i\in \Z,\; \delta_i\in I_{sc}.
\]
Then $c_2(x)=\sum_{i=1}^4d_i c_2(\bar \rho_i)$.
By Proposition~\ref{prop:evensum} we have $d_1\equiv d_3\equiv d_4 \equiv 0\bmod 2$.
Since $c_2(\bar\rho_i)=2q$ for $i=1,3,4$ and $c_2(\bar\rho_2)=12q$ by \cite[\S3d]{MNZ},
we obtain that $c_2(x)\in 4\Z q=\Dec(G)$.
\end{proof}

\section{Type ${\rm E}$}

We now treat the exceptional cases. In the following we show that any semi-decomposable invariant is decomposable for semisimple groups of type $E_{6}$ and $E_{7}$.

\begin{lemma}\label{lem:Esemidec}
Let $G$ be a split semisimple group of type $E_{6}$ or $E_{7}$. Then, $\Sdec(G)=\Dec(G)$, i.e., each semi-decomposable invariant is decomposable.
\end{lemma}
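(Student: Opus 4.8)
The plan is to reduce the assertion to two inclusions between explicitly described subgroups of $S^2(T^*_{sc})^W$. Write $G=G^{sc}/\mu$ with $G^{sc}=H_1\times\dots\times H_m$, each $H_i$ a simply connected simple group of the given type and $\mu$ a central subgroup, and let $q_i$ be the normalized Killing form of $H_i$, so that $S^2(T^*_{sc})^W=\bigoplus_i\Z q_i$ and $T^*\subseteq\bigoplus_i\Lambda_{w,i}$. Put $n=6$ in type $E_6$ and $n=12$ in type $E_7$ (the Dynkin index of the type). Recall from \cite{Mer} that $\Dec(H_i)=n\Z q_i$, hence $\Dec(G^{sc})=\bigoplus_i n\Z q_i$ by \eqref{eq:product}, and that $\Dec(G)\subseteq\Sdec(G)$ always by \eqref{eq:inclus}. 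So it suffices to prove
\[
\Sdec(G)\subseteq \textstyle\bigoplus_i n\Z q_i\subseteq \Dec(G).
\]

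For the first inclusion I would use the description $\Sdec(G)=c_2(\Z[T^*]\cap I^W_{sc})$ from \cite{MNZ}. Since $\Z[T^*]\cap I^W_{sc}\subseteq I^W_{sc}$ and $c_2(I^W_{sc})=\Dec(G^{sc})$ — which holds because every augmented orbit $\bar\rho(\omega_j)$ lies in $I_{sc}^2$ and has zero rank, so $c_1(\bar\rho(\omega_j))=0$ and $c_2(f\,\bar\rho(\omega_j))=\varepsilon(f)\,c_2(\bar\rho(\omega_j))$ for all $f\in\Z[T^*_{sc}]$ — one obtains $\Sdec(G)\subseteq\Dec(G^{sc})=\bigoplus_i n\Z q_i$.

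For the second inclusion I would argue that $\Dec(G)$ already contains $\Dec(H_{i,ad})$ for each $i$: the root lattice $\Lambda_{r,i}$ of $H_i$ lies in $T^*$, so $\Z[\Lambda_{r,i}]^{W_i}\cap I_{sc}\subseteq\Z[T^*]^W\cap I_{sc}$, and applying $c_2$ (whose restriction to $\Z[\Lambda_{r,i}]$ is the characteristic map of $H_{i,ad}$) gives $\Dec(H_{i,ad})\subseteq\Dec(G)$; as these sit in distinct coordinate lines of $\bigoplus_i\Z q_i$, their sum is $\bigoplus_i\Dec(H_{i,ad})$. Thus everything comes down to $\Dec(E_6^{ad})=6\Z q$, resp. $\Dec(E_7^{ad})=12\Z q$ (the inclusion $\subseteq$ being clear from $\Dec(E_\bullet^{ad})\subseteq\Dec(E_\bullet^{sc})$). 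To get the reverse inclusion I would use that $c_1(\bar\rho(\lambda))=0$, that $c_2$ is additive on integral combinations of augmented orbits, and that $c_2(\bar\rho(c\lambda))=c^2c_2(\bar\rho(\lambda))$: in type $E_6$ one has $3\omega_1,\omega_2\in\Lambda_r$ with $c_2(\bar\rho(3\omega_1))=-54q$ and $c_2(\bar\rho(\omega_2))=-24q$ (the $27$-dimensional representation has Dynkin index $6$, the adjoint representation has index $2h^\vee=24$), and $\gcd(54,24)=6$ yields $6q=c_2\bigl(-\bar\rho(3\omega_1)+2\bar\rho(\omega_2)\bigr)\in\Dec(E_6^{ad})$; in type $E_7$ one has $2\omega_7,\omega_1\in\Lambda_r$ with $c_2(\bar\rho(2\omega_7))=-48q$ and $c_2(\bar\rho(\omega_1))=-36q$ (the $56$-dimensional representation has index $12$, the adjoint one index $2h^\vee=36$), and $\gcd(48,36)=12$. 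Combining the two inclusions gives $\Dec(G)=\Sdec(G)$.

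I expect the main obstacle to be exactly this last step, namely checking that passing from $E_\bullet^{sc}$ to $E_\bullet^{ad}$ does not shrink $\Dec$: one must locate representations with highest weights in the root lattice whose \emph{orbit} Dynkin indices have greatest common divisor equal to the Dynkin index of the Lie algebra, rather than a proper multiple of it. Everything else is routine bookkeeping with the presentations of $\Dec$ and $\Sdec$ from \cite{Mer} and \cite{MNZ}; in particular $\Dec(E_6^{ad})$ and $\Dec(E_7^{ad})$ may alternatively just be quoted from \cite{Me16}.
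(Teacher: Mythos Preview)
Your proposal is correct and follows essentially the same route as the paper: sandwich $\Dec(G)$ between $\Dec(G^{ad})$ and $\Dec(G^{sc})$, use that these coincide for $E_6$ and $E_7$ (the paper simply quotes this from \cite[\S4b]{Mer}, while you redo it by hand with the orbits of $3\omega_1,\omega_2$ resp.\ $2\omega_7,\omega_1$), and then bound $\Sdec(G)$ above by $\Dec(G^{sc})$ via $c_2(I^W_{sc})=\Dec(G^{sc})$. The only cosmetic difference is that the paper writes out the expansion $x=\sum(d_{ij}+\delta_{ij})\bar\rho(\omega_{ij})$ and cites \cite{LaS} for $c_2(\bar\rho(\omega_{ij}))\in n\Z q_j$, whereas you phrase the same step more abstractly using $\bar\rho(\omega_j)\in I_{sc}^2$; these are the same argument.
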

\begin{proof}
We denote by $E_{6}^{ad}$ $($resp. $E_{7}^{ad}$$)$ a split simple adjoint group of type $E_{6}$ $($resp. $E_{7}$$)$ and by $E_{6}^{sc}$ $($resp. $E_{7}^{sc}$$)$ a split simple simply connected group of type $E_{6}$ $($resp. $E_{7}$$)$. We first consider the case where $G$ is a semisimple group of type $E_{6}$, i.e., $G=(E_{6}^{sc}\times \cdots \times E_{6}^{sc})/\gmu$ ($n$ copies of $E_{6}^{sc}$) for some central subgroup $\gmu$. Let $q_{1},\cdots, q_{n}$ be the corresponding normalized Killing forms for each copy of $E_{6}^{sc}$ in $G$. Since $\Dec(E^{sc}_{6})=\Dec(E_{6}^{ad})=6q_{i}$ by \cite[\S 4b]{Mer}, we have $\Dec(E_{6}^{sc}\times \cdots \times E_{6}^{sc})=\Dec(E_{6}^{ad}\times \cdots \times E_{6}^{ad})=6\Z q_{1}\oplus \cdots \oplus 6\Z q_{n}$. As $\Dec(E_{6}^{ad}\times E_{6}^{ad})\subseteq \Dec(G)\subseteq \Dec(E^{sc}_{6}\times E^{sc}_{6})$, we conclude that $\Dec(G)=6\Z q_{1}\oplus \cdots \oplus 6\Z q_{n}$.

Now we show $\Sdec(G)\subseteq \Dec(G)$. Similar to the proof of Proposition \ref{propB}, we consider an arbitrary element $x\in I_{sc}^W$:
\begin{equation*}
	x=\sum_{j=1}^{n}\sum_{i=1}^{6} (d_{ij}+\delta_{ij})\bar{\rho}(\omega_{ij})
\end{equation*}
for some $d_{ij}\in \Z$ and $\delta_{ij}\in I_{sc}$, where $\{\omega_{1j},\ldots, \omega_{6j}\}$ is the fundamental weights of each copy of $E_{6}^{sc}$. Since $c_2(I_{sc}^3)=0$, we obtain
\begin{equation}\label{E6xc2}
c_2(x)=\sum_{j=1}^{n}\sum_{i=1}^{6} d_{ij}c_2(\bar{\rho}(\omega_{ij})).
\end{equation}
By \cite[\S 2]{LaS}, each element $c_2(\bar{\rho}(\omega_{ij}))$ in (\ref{E6xc2}) iz contained in $6\Z q_{j}$. Hence, $\Sdec(G)\subseteq \Dec(G)$, so the equality holds. 

Let $G=(E_{7}^{sc}\times \cdots \times E_{7}^{sc})/\gmu$ ($n$ copies of $E_{7}^{sc}$) for some central subgroup $\gmu$. Then, the same argument together with $\Dec(E_{6}^{sc})=\Dec(E_{6}^{ad})=12q_{i}$ (\cite[\S 4b]{Mer}) shows that $\Dec(G)=\Sdec(G)=12\Z q_{1}\oplus \cdots \oplus 12\Z q_{n}$.\end{proof}

We determine the indecomposable groups for the groups of index $2$ (resp. $3$) of type $E_{6}$ (resp. $E_{7}$).
\begin{proposition}\label{prop:typeE}
	(1) Let $G= (E^{sc}_{6}\times \cdots \times E^{sc}_{6})/\gmu_{3}$ with $n\, (\geq 2)$ copies of a split simple simply connected group $E_{6}^{sc}$ of type $E_{6}$ and the diagonal subgroup $\gmu_{3}$. Then \[\Inv(G)=\Z/2\Z\oplus (\Z/6\Z)^{\oplus n-1}\text{ and }\Sdec(G)=\Dec(G).\]
		
		(2) Let $G= (E^{sc}_{7}\times \cdots \times E^{sc}_{7})/\gmu_{2}$ with $n\, (\geq 2)$ copies of a split simple simply connected group $E_{7}^{sc}$ of type $E_{7}$ and the diagonal subgroup $\gmu_{2}$. Then \[\Inv(G)=\Z/3\Z\oplus (\Z/12\Z)^{\oplus n-1}\text{ and }\Sdec(G)=\Dec(G).\]
\end{proposition}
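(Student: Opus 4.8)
The plan is to run, for each of the two cases, the same machinery as in the proof of Proposition~\ref{Ddiagonal}: first compute $Q(G)$ from the character lattice $T^*$, then compute $\Dec(G)$ by combining the lower bound coming from explicit orbit sums $c_2(\bar\rho(\lambda))$ with the upper bound $\Dec(G)\subseteq\Dec(E_6^{sc}\times\cdots\times E_6^{sc})$ (resp.\ $E_7$) already established in Lemma~\ref{lem:Esemidec}, and finally read off $\Inv(G)=Q(G)/\Dec(G)$. The equality $\Sdec(G)=\Dec(G)$ is already contained in Lemma~\ref{lem:Esemidec}, so nothing new is needed there; it only remains to pin down $\Inv(G)$.

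First I would treat $G=(E_6^{sc})^{\times n}/\gmu_3$. Since the center of $E_6^{sc}$ is $\gmu_3$, the quotient $T^*_{sc}/T^*_{ad}=(\Z/3\Z)^{\oplus n}$, and by~\ref{par:char} the diagonal $\gmu_3$ cuts out $T^*$ as the kernel of the sum map to $\Z/3\Z$; explicitly, writing a weight as $\sum_j\sum_i a_{ij}\omega_{ij}$, the condition is that $\sum_j\langle a_{\bullet j}\rangle\equiv 0\bmod 3$, where $\langle a_{\bullet j}\rangle$ is the class of the $j$-th component in $\Lambda_w/\Lambda_r\cong\Z/3\Z$. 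Choosing a $\Z$-basis $\{x_{ij}\}$ of $T^*$ adapted to this relation (as in~\eqref{xid}) and substituting into $\phi=\sum_j d_jq_j$ — using that $q_j$ is the normalized Killing form of $E_6^{sc}$, which has discriminant $3$ on $\Lambda_w$ — one finds that $Q(G)=\{\sum_j d_jq_j\mid \sum_j d_j\equiv 0\bmod 3\}$, together with the $2$-adic constraint coming from the even part of the Killing form; combining these, $Q(G)$ has index $6$ in $\bigoplus\Z q_j$ along the "sum" direction and index $1$ in the complementary directions. Against this, the explicit computations $c_2(\bar\rho(\omega_{1j}))\in 6\Z q_j$ from \cite[\S2]{LaS} (already invoked in Lemma~\ref{lem:Esemidec}) together with a single mixed element $c_2(\bar\rho(\omega_{1,1}+\omega_{1,2}))\equiv -3q_1-3q_2\bmod\dots$ give the lower bound, forcing $\Dec(G)=\bigoplus_{j=2}^n 6\Z(q_1-q_j)\oplus 6\Z(q_1+q_2)$; the quotient is then $\Z/2\Z\oplus(\Z/6\Z)^{\oplus n-1}$, the $\Z/2\Z$ factor arising from the residual $2$-torsion not killed by $6\Z q_j$ inside $Q(G)$.

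The case $G=(E_7^{sc})^{\times n}/\gmu_2$ is entirely parallel, with $\gmu_3$ replaced by $\gmu_2$, the relation $\sum_j\langle a_{\bullet j}\rangle\equiv 0\bmod 2$ defining $T^*$, and the Killing form discriminant $2$ on $\Lambda_w$ for $E_7$; here $\Dec(E_7^{sc})=12q_j$, and one gets $Q(G)$ of index $2$ in the sum direction together with a $3$-adic constraint from the Killing form, so that $\Inv(G)=\Z/3\Z\oplus(\Z/12\Z)^{\oplus n-1}$. I expect the main obstacle to be the precise lattice bookkeeping in the $Q(G)$ computation — i.e.\ verifying that after passing to the adapted basis $\{x_{ij}\}$ the only surviving obstruction is the single coefficient at one basis square (the analogue of the $x_n'^2$-coefficient in~\eqref{stgenDodd}), which requires knowing the exact expansion of the normalized Killing forms of $E_6$ and $E_7$ in fundamental-weight coordinates; once this is in hand the matching of $\Dec(G)$ with $Q(G)$ and the identification of the quotient is routine and identical in spirit to Corollary~\ref{cor:typeD}. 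The statement $\Sdec(G)=\Dec(G)$ requires no further work beyond citing Lemma~\ref{lem:Esemidec}.
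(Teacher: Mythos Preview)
Your overall plan matches the paper's: compute $Q(G)$ via an adapted basis of $T^*$, compute $\Dec(G)$, take the quotient, and invoke Lemma~\ref{lem:Esemidec} for $\Sdec(G)=\Dec(G)$. However, both of your intermediate computations are incorrect, and the right answer emerges only by a cancellation of errors.

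For $E_6$, the normalized Killing form $q=\omega_1^2-\omega_1\omega_3+\omega_2^2-\omega_2\omega_4+\omega_3^2-\omega_3\omega_4+\omega_4^2-\omega_4\omega_5+\omega_5^2-\omega_5\omega_6+\omega_6^2$ already has integer coefficients in fundamental-weight coordinates; after passing to a basis of $T^*$ the only non-integral coefficient of $dq+d'q'$ is $\tfrac{2(d+d')}{3}$ at one square. Thus $Q(G)=\{\sum d_jq_j\mid \sum d_j\equiv 0\bmod 3\}$, with no ``$2$-adic constraint from the even part of the Killing form''. Likewise for $E_7$ the constraint is $\sum d_j\equiv 0\bmod 4$, not a mod $2$ condition supplemented by a $3$-adic one.

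For $\Dec(G)$ you overlook the key simplification already contained in Lemma~\ref{lem:Esemidec}: since $\Dec(E_6^{sc})=\Dec(E_6^{ad})=6\Z q$, the sandwich $\Dec((E_6^{ad})^n)\subseteq\Dec(G)\subseteq\Dec((E_6^{sc})^n)$ forces $\Dec(G)=\bigoplus_j 6\Z q_j$ exactly (resp.\ $\bigoplus_j 12\Z q_j$ for $E_7$). No mixed orbit is needed, and in fact your proposed element $\omega_{1,1}+\omega_{1,2}$ does not lie in $T^*$ (its class in $(\Z/3\Z)^n$ has sum $2\not\equiv 0$), so it contributes nothing to $\Dec(G)$. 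Your lattice $\bigoplus_{j\ge 2}6\Z(q_1-q_j)\oplus 6\Z(q_1+q_2)$ is a proper sublattice of the correct $\Dec(G)$ of index $2$. With the correct $Q(G)$ and $\Dec(G)$ the quotient $\Z/2\Z\oplus(\Z/6\Z)^{n-1}$ (resp.\ $\Z/3\Z\oplus(\Z/12\Z)^{n-1}$) is immediate; the paper does exactly this for $n=2$ and then notes that the general case is identical.
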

\begin{proof}
By Lemma \ref{lem:Esemidec}, it suffices to compute the indecomposable groups.

(1) Assume that $n=2$. Then, by \ref{par:char} the character group of the split maximal torus $T$ of $G$ is given by
{\small \begin{equation}\label{E6char}T^{*}=\{\sum_{i=1}^{6}a_{i}\omega_{i}+\sum_{i=1}^{6}a_{i}'\omega_{i}' \,|\, a_{1}+a_{1}'+a_{5}+a_{5}'\equiv a_{3}+a_{3}'+a_{6}+a_{6}' \bmod 3 \}.
\end{equation}}
Choose a basis $\{x_{1},\ldots x_{6},x_{1}',\ldots, x_{6}'\}$ of (\ref{E6char}) as follows
{\small \begin{equation*}
	\begin{split}&
		x_{1}=\omega_{1}+\omega_{6}', x_{2}=\omega_{2}, x_{3}=\omega_{3}+2\omega_{6}', x_{4}=\omega_{4}, x_{5}=\omega_{5}+\omega_{6}', x_{6}=\omega_{6}+2\omega_{6}',\\
		&x_{1}'=\omega_{1}'+\omega_{6}', x_{2}'=\omega_{2}', x_{3}'=\omega_{3}'+2\omega_{6}', x_{4}'=\omega_{4}', x_{5}'=\omega_{5}'+\omega_{6}', x_{6}'=3\omega_{6}'.
	\end{split}
\end{equation*}}
Let $\phi$ be a quadratic form on $x_{i}$, $x_{i}'$ over $\Z$. Since the group $S^{2}(T^*_{sc})^{W}$ is generated by the normalized Killing forms 
{\small \begin{equation}\label{KillE6}
\begin{split}q:=\omega_{1}^2-\omega_{1}\omega_{3}+\omega_{2}^{2}-\omega_{2}\omega_{4}+\omega_{3}^{2}-\omega_{3}\omega_{4}+\omega_{4}^{2}-\omega_{4}\omega_{5}+\omega_{5}^{2}-\omega_{5}\omega_{6}+\omega_{6}^{2} &\,\,\text{ and }\\
q':=\omega_{1}'^2-\omega_{1}'\omega_{3}'+\omega_{2}'^{2}-\omega_{2}'\omega_{4}'+\omega_{3}'^{2}-\omega_{3}'\omega_{4}'+\omega_{4}'^{2}-\omega_{4}'\omega_{5}'+\omega_{5}'^{2}-\omega_{5}'\omega_{6}'+\omega_{6}'^{2},&
\end{split}
\end{equation}}
from the equation $\phi=dq+d'q'$, we obtain 
$$\phi=(\tfrac{2d+2d'}{3})x_{6}'^{2}+\psi,$$
where $\psi$ is a quadratic form with integer coefficients. Therefore,  
\begin{equation}\label{stgenE6}
Q(G)=\{dq+d'q'\,|\,  d+d'\equiv 0 \bmod 3\}.
\end{equation}
Therefore, by (\ref{stgenE6}) and the proof of the previous lemma, we obtain \[\Inv(G)=\Z/2\Z(3q+3q')\oplus \Z/6\Z(q+2q').\] For $n\geq 3$, the same argument shows that \[Q(G)=\{d_{1}q+\cdots d_{n}q_{n}\,|\,  \sum_{i=1}^{n}d_{i}\equiv 0 \bmod 3\},\] where $q_{1},\ldots, q_{n}$ are the corresponding normalized Killing forms. Hence, the result for the indecomposable group follows from Lemma \ref{lem:Esemidec}.

$(2)$ Assume that $n=2$. Then, by \ref{par:char} the character group of the split maximal torus $T$ of $G$ is given by
{\small \[
T^{*}=\{\sum_{i=1}^{7}a_{i}\omega_{i}+\sum_{i=1}^{7}a_{i}'\omega_{i}' \,|\, a_{2}+a_{5}+a_{7}\equiv a_{2}'+a_{5}'+a_{7}' \bmod 2 \}.
\]}
Since the group $S^{2}(T^*_{sc})^{W}$ is generated by the normalized Killing forms {\small \[q:=q_{6}-\omega_{6}\omega_{7}+\omega_{7}^{2} \text{ and }
q':=q_{6}'-\omega_{6}'\omega_{7}'+\omega_{7}'^{2},\]}
where $q_{6}$ and $q_{6}'$ are the normalized Killing forms of $E_{6}$ in (\ref{KillE6}), the same argument as in ($1$) shows
\[
Q(G)=\{dq+d'q'\,|\,  d+d'\equiv 0 \bmod 4\}.
\]
Hence, by Lemma \ref{lem:Esemidec} $\Inv(G)=\Z/3\Z(4q')\oplus \Z/12\Z(q-q')$. For $n\geq 3$, the same argument shows that $Q(G)=\{d_{1}q+\cdots d_{n}q_{n}\,|\,  \sum_{i=1}^{n}d_{i}\equiv 0 \bmod 4\}$, where $q_{1},\ldots, q_{n}$ are the corresponding normalized Killing forms, which comptues the indecomposable group together with Lemma \ref{lem:Esemidec}.
\end{proof}

\begin{remark}
(1) Let $G=(E_{6}^{ad}\times \cdots \times E_{6}^{ad})\times (E_{6}^{sc}\times \cdots \times E_{6}^{sc})$, $n (\geq 0)$ copies of $E_{6}^{ad}$ and $m (\geq 0)$ copies of $E_{6}^{sc}$. It follows by~\eqref{eq:product}, (\ref{stgenE6}) and the proof of Lemma \ref{lem:Esemidec} that $\Inv(G)=(\Z/2\Z)^{\oplus n}\oplus (\Z/6\Z)^{\oplus m}$. Similarly, for the same group $G$ replacing $E_{6}$ by $E_{7}$, we have $\Inv(G)=(\Z/3\Z)^{\oplus n}\oplus (\Z/12\Z)^{\oplus m}$.

(2) Note that the center $\gmu_{3}\times \gmu_{3}$ of $E_{6}^{sc}\times E_{6}^{sc}$ contains two nontrivial ($\neq \gmu_{3}\times 1$, $1\times \gmu_{3}$) central subgroups which is isomorphic to $\gmu_{3}$: a diagonal subgroup and a non-diagonal subgroup. Assume that $\gmu$ is non-diagonal. Then, the character group of $T$ becomes
{\em \begin{equation*}T^{*}=\{\sum_{i=1}^{6}a_{i}\omega_{i}+\sum_{i=1}^{6}a_{i}'\omega_{i}' \,|\, a_{1}+a_{3}'+a_{5}+a_{6}'\equiv a_{1}'+a_{3}+a_{5}'+a_{6} \bmod 3 \}.
\end{equation*}}
In this case, we have the same $Q(G)$ as in (\ref{stgenE6}), thus have the same indecomposable group.\end{remark}


\begin{thebibliography}{10}

\bibitem{Bae}
S.~Baek, 
Chow groups of products of Severi-Brauer varieties and invariants,  
to appear in {\em Trans.~Amer.~Math.~Soc.}  
{\tt doi:10.1090/tran/6772}. 

\bibitem{BR}
H.~Bermudez, A.~Ruozzi, 
Degree~$3$ cohomological invariants of split simple groups that are neither simply connected nor adjoint, 
{\em J.~Ramanujan~Math.~Soc.} 29(4): 465--481, 2014. 

\bibitem{Ei}
D.~Eisenbud, 
Commutative algebra. With a view toward algebraic geometry. 
{\em Graduate Texts in Mathematics}, 150. Springer-Verlag, New York, 1995. xvi+785 pp.


\bibitem{GMS}
S.~Garibaldi, A.~Merkurjev, J.-P.~Serre,
Cohomological Invariants in Galois Cohomology, 
{\em University Lecture Series} 28, AMS, Providence, RI, 2003.

\bibitem{GaZa}
S.~Garibaldi, K.~Zainoulline,
The gamma-filtration and the Rost invariant, 
{\em J.~Reine und Angew. Math.} 696: 225--244, 2014.

\bibitem{GiZa}
S.~Gille, K.~Zainoulline,
Equivariant pretheories and invariants of torsors,  
{\em Transf. Groups} 17(2): 471--498, 2012.


\bibitem{Kar}
N.~Karpenko,
Chow ring of generic flag varieties, Preprint 2016.

\bibitem{Ka17}
N.~Karpenko,
Chow ring of generically twisted varieties of complete flags, 
{\em Adv. Math.} 306: 789--806, 2017.

\bibitem{Ka16}
N.~Karpenko,
Chow groups of some generically twisted flag varieties, 
to appear in {\em Annals of $K$-Theory}.

\bibitem{Kar95}
N.~Karpenko, 
On topological filtration for Severi-Brauer varieties, 
{\em Proc. Sympos. Pure Math.} 58: 275--277, 1995. 


\bibitem{LaS}
Y.~Laszlo, C.~Sorger, 
The line bundles on the moduli of parabolic $G$-bundles over curves and their sections, 
{\em Ann. Ec. Norm. Sup. (4)} 30: 499--525, 1997.


\bibitem{Me16}
A.~Merkurjev, 
Degree three unramified cohomology of adjoint semisimple groups, Preprint 2016. 
{\tt http://www.math.ucla.edu/~merkurev/publicat.htm}

\bibitem{M16} 
A.~Merkurjev, 
Cohomological invariants of central simple algebras, 
Preprint 2016.
{\tt http://www.math.ucla.edu/~merkurev/publicat.htm}

\bibitem{Mer}
A.~Merkurjev, 
Degree three cohomological invariants of semisimple groups, 
{\em J.~Eur. Math. Soc.} 18(2): 657--680, 2016.


\bibitem{MNZ}
A.~Merkurjev, A.~Neshitov, K.~Zainoulline, 
Cohomological Invariants in degree 3 and torsion in the Chow group of a versal flag. 
\emph{Compositio Math.} 151(8): 1416--1432, 2015.

\bibitem{MS}
T.~Mora, M.~Sala,
On the Gr\"obner bases of some symmetric systems and their applications to coding theory,
\emph{J. Symbolic Computation} 35: 177-194, 2003.


\bibitem{Panin}
I.~Panin.
On the algebraic $K$-theory of twisted flag varieties.
{\em K-Theory} 8(6): 541--585, 1994.


\bibitem{To}
B.~Totaro,   
The Chow ring of a classifying space. 
Algebraic $K$-theory (Seattle, WA, 1997), 249--281, 
{\em Proc. Sympos. Pure Math.} 67, AMS, Providence, RI, 1999.

\bibitem{Za}
K.~Zainoulline,
Twisted gamma-filtration of a linear algebraic group, 
{\em Compositio Math.} 148(5): 1645--1654, 2012.


\end{thebibliography}
\end{document}